\numberwithin{equation}{section}
\newcommand{\adam}[1]{{\color{blue}{#1}}}
\newcommand{\stig}[1]{{\color{red}{#1}}}
\newcommand{\R}{{\mathbf{R}}}
\newcommand{\E}{{\mathbf{E}}}
\newcommand{\N}{{\mathbf{N}}}
\newcommand{\D}{{\mathcal{D}}}
\newcommand{\F}{{\mathcal{F}}} 
\newcommand{\LB}{{\mathcal{L}}}
\newcommand{\diff}[1]{\,\mathrm{d}#1}
\newcommand{\Cb}{\mathcal{G}_{\mathrm{b}}}
\newcommand{\Cp}{\mathcal{G}_{\mathrm{p}}}
\newcommand{\Cc}{\mathcal{C}}
\DeclareMathOperator{\Tr}{Tr}
\theoremstyle{plain}
\newtheorem{definition}{Definition}[section]
\newtheorem{theorem}[definition]{Theorem}
\newtheorem{lemma}[definition]{Lemma}
\newtheorem{corollary}[definition]{Corollary}
\newtheorem{proposition}[definition]{Proposition}
\newtheorem{assumption}[definition]{Assumption}
\theoremstyle{definition}
\newtheorem{remark}[definition]{Remark}
\begin{document}

\title[Duality in refined Sobolev-Malliavin spaces]
{Duality in refined Sobolev-Malliavin spaces and weak approximation of SPDE}

\author[A.~Andersson]{Adam Andersson}
\address{Adam Andersson\\
Department of Mathematical Sciences\\
Chalmers University of Technology and University of Gothenburg\\
SE-412 96 Gothenburg\\
Sweden}
\email{adam.andersson@chalmers.se}

\author[R.~Kruse]{Raphael Kruse}
\address{Raphael Kruse\\
Technische Universit\"at Berlin\\
Institut f\"ur Mathematik\\
Sek. MA 5-3\\
Stra\ss e des 17.~Juni 136\\
DE-10623 Berlin\\
Germany}
\email{kruse@math.tu-berlin.de}

\author[S.~Larsson]{Stig Larsson}
\address{Stig Larsson\\
Department of Mathematical Sciences\\
Chalmers University of Technology and University of Gothenburg\\
SE-412 96 Gothenburg\\
Sweden}
\email{stig@chalmers.se}

\keywords{SPDE, finite element method, backward Euler, weak
  convergence, convergence of moments, Malliavin calculus, duality,
  spatio-temporal discretization}
\subjclass[2010]{60H15, 60H07, 65C30, 65M60}

\begin{abstract}
  We introduce a new family of refined Sobolev-Malliavin spaces that
  capture the integrability in time of the Malliavin derivative.  We
  consider duality in these spaces and derive a Burkholder type
  inequality in a dual norm.

  The theory we develop allows us to prove weak convergence with
  essentially optimal rate for numerical approximations in space and
  time of semilinear parabolic stochastic evolution equations driven
  by Gaussian additive noise. In particular, we combine a standard Galerkin
  finite element method with backward Euler timestepping. The
  method of proof does not rely on the use of the Kolmogorov equation
  or the It\={o} formula and is therefore non-Markovian in
  nature. Test functions satisfying polynomial growth and mild
  smoothness assumptions are allowed, meaning in particular that we
  prove convergence of arbitrary moments with essentially optimal
  rate.
\end{abstract}

\maketitle

\section{Introduction}
\label{sec1}
The classical Sobolev-Malliavin spaces capture the integrability in
the chance parameter of a random variable and its Malliavin
derivatives. In many situations, where Malliavin calculus is used, in
particular, for stochastic evolution equations, the Malliavin
derivative is a stochastic process. One purpose of this paper is to
introduce a refined family of Sobolev-Malliavin spaces that capture
the integrability properties of the Malliavin derivative with respect
to its time parameter. It turns out that the Malliavin derivative of
the solution to a parabolic stochastic evolution equation has,
depending on the regularity of the noise, good integrability
properties in time and, in the case of trace class noise, it is even
bounded. However, the main purpose of the new feature is not to
measure regularity in a refined way, but to exploit that the
corresponding dual norms are weaker with respect to integrability in
time.

Let $(H,\|\cdot\|,\langle\cdot,\cdot\rangle)$ be a separable Hilbert
space and $Q\in\LB(H)$ be a selfadjoint positive semidefinite linear
operator on $H$. We define the space $H_0=Q^\frac12(H)$ and let
$\LB_2^0=\LB_2(H_0,H)$ be the space of Hilbert-Schmidt operators
from $H_0$ to $H$. We consider a filtered probability space
$(\Omega,\mathcal{F},(\mathcal{F}_t)_{t\in[0,T]},\mathbf{P})$ on which
an $L^2([0,T],H_0)$-isonormal process is defined. For a differentiable
random variable $X$ the Malliavin derivative $DX=(D_tX)_{t\in[0,T]}$
with respect to the isonormal process is an $\LB_2^0$-valued
stochastic process. We introduce, for $p,q\geq2$, the refined
Sobolev-Malliavin spaces $\mathbf{M}^{1,p,q}(H)$ of random variables
$X\in L^2(\Omega,H)$ such that
\begin{align*}
  \|X\|_{\mathbf{M}^{1,p,q}(H)}=\Big(\|X\|_{L^p(\Omega,H)}^p
  +\|DX\|_{L^p(\Omega,L^q([0,T],\LB_2^0))}^p\Big)^\frac1p<\infty.
\end{align*}
The classical Sobolev-Malliavin spaces are obtained for $q=2$. We use
the refined spaces in a duality argument based on the Gelfand triple
\begin{align*}
\mathbf{M}^{1,p,q}(H)\subset L^2(\Omega,H)\subset\mathbf{M}^{1,p,q}(H)^*.
\end{align*}
A key ingredient is the following inequality for the $H$-valued
stochastic It\=o-integral $\int_0^T\Phi\diff{W}$ in the dual norm of
$\mathbf{M}^{1,p,q}(H)$, where $W$ is a cylindrical $Q$-Wiener process
and $\Phi\in L^{p}(\Omega,L^{2}([0,T],\LB_2^0))$ is a predictable
stochastic process. In Theorem~\ref{lemma2:dual3} we show
\begin{align}
  \label{ineq1:Burkholder}
  \Big\|\int_0^T\Phi(t)\diff{W(t)}\Big\|_{\mathbf{M}^{1,p,q}(H)^*}
  \leq\big\|\Phi\big\|_{L^{p'}(\Omega,L^{q'}([0,T],\LB_2^0))},
\end{align}
where $p',q'$ are the conjugate exponents to $p,q \ge 2$. We apply
this inequality in situations, where one usually relies on the
Burkholder-Davis-Gundy inequality, see
Lemma~\ref{lemma2:Burkholder}. There the $L^2(\Omega,H)$-norm of the
stochastic integral is bounded in terms of the
$L^p(\Omega,L^2([0,T],\LB_2^0))$-norm of $\Phi$, whereas here the dual
norm of the integral is bounded by the
$L^{p'}(\Omega,L^{q'}([0,T],\LB_2^0))$-norm of $\Phi$. Since $q'\le
2$, this allows stronger singularities with respect to $t$.

In defining the spaces $\mathbf{M}^{1,p,q}(H)$ some care needs to be
taken. For $q\geq2$ we define the Malliavin derivative on a
non-standard core $\mathcal{S}^q(H)$, see \eqref{def:Sp},
\eqref{eq:SqH}, of smooth and cylindrical random variables, more
regular than in the classical theory in which $q=2$. By proving that
the operator $D\colon\mathcal{S}^q(H)\rightarrow
L^p(\Omega,L^q([0,T],\LB_2^0))$ is well defined and closable, we show
that $\mathbf{M}^{1,p,q}(H)$ are Banach spaces. The proofs are rather
elementary and rely to a large extent on existing results for the case
$q=2$. The spaces are new to the best of our knowledge.

The motivation for introducing the spaces described above is found in
our aim to develop new methods for the analysis of the weak error of
numerical approximations of semilinear parabolic stochastic partial
differential equations of the form
\begin{align}
  \label{eq1:SPDE}
  \diff{X}(t)+AX(t)\diff{t}=F(X(t))\diff{t}+\diff{W(t)},\; t\in(0,T];\quad
  X(0)=X_0.
\end{align}
Both space-time white noise and trace class noise are considered and
the nonlinearity $F$ is allowed to be a Nemytskii operator.  See
Assumption~\ref{as1:A} below for precise conditions on $A$, $F$, $W$,
$X_0$. We treat discretizations in space and time, allowing for any
spatial discretization scheme that satisfies the abstract
Assumption~\ref{as1:Scheme} below. We verify this assumption in
Section~\ref{sec5} for piecewise linear finite element approximations
of the heat equation. Discretization in time is performed by the
semi-implicit backward Euler method. Our main result, weak convergence
of essentially optimal rate, is stated in Theorem~\ref{thm1:main}.

More concretely, our main example is the semilinear stochastic heat
equation, 
\begin{align*}
  \begin{aligned}
  &\dot{u}(\xi,t) -\Delta u (\xi,t) = f(u (\xi,t))+\dot{\eta}(\xi,t),
&&\quad (\xi,t)\in  D\times(0,T],\\ 
&u (\xi,t)=0,
&&\quad (\xi,t)\in  \partial D\times(0,T],\\ 
&u(\xi,0)=u_0(\xi)
&&\quad \xi\in   D, 
  \end{aligned}
\end{align*}
where $f$ is a smooth function with bounded derivatives and
$\dot{\eta}$ is additive noise, white in time and possibly correlated
in space.

Weak convergence for linear stochastic evolution equations was studied
in \cite{Schrodinger}, \cite{debussche2009}, \cite{Geissert},
\cite{larsson2011}, \cite{larsson2013}, \cite{kruse2013},
\cite{LindnerSchilling} and the works \cite{Brehier}, \cite{Brehier2},
\cite{Brehier3}, \cite{hausenblas2003Weak}, \cite{hausenblas2010},
\cite{kopecthesis}*{Chapt.~5}, \cite{Wang}, \cite{Wang2014},
\cite{WangGan} treat semilinear equations with additive noise. Of
these \cite{kopecthesis}*{Chapt.~5} is unique in that it treats a
nonglobal Lipschitz drift term.  In \cite{buckwar2008},
\cite{buckwar2005} the authors study weak convergence for stochastic
ordinary delay differential equations. Most of these works are based
on It\={o}'s formula and Kolmogorov's equation. It becomes apparent
while reading the literature that proving weak convergence of optimal
order is a challenging task. Semilinear equations with multiplicative 
noise were treated in
\cite{AnderssonLarsson}, \cite{conus2014}, \cite{debussche2011}, but
only \cite{conus2014} covers noise more general than linear. No
results are known for multiplicative noise in the form of a nonlinear
Nemytskii operator. As in \cite{Brehier}, \cite{Brehier3}, \cite{conus2014},
\cite{hausenblas2010}, \cite{kopecthesis}*{Chapt.~5}, \cite{Wang}, 
\cite{Wang2014} we allow $F$ to be
a nonlinear Nemytskii operator.

Let $X,Y\in L^2(\Omega,H)$ and $\varphi\colon H\rightarrow\R$ be a sufficiently smooth function of polynomial growth. Our
technique relies on the following linearization of the weak error
\begin{align*}
  \E\big[\varphi(X)-\varphi(Y)\big]=\E\big[ \big\langle
  \tilde{\varphi},X-Y\big\rangle \big],\quad \textrm{where} \quad
  \tilde{\varphi}=\int_0^1\varphi'(\varrho X+(1-\varrho)Y)\diff{\varrho},
\end{align*}
introduced in \cite{KohatsuHiga2} and \cite{kruse2013}. The
  paper \cite{KohatsuHiga2} then proceeds by using an adjoint problem.
  Based on an idea from \cite{kruse2013}, our method is the
following: If $V\subset L^2(\Omega,H)\subset V^*$ is a Gelfand triple
such that $\tilde{\varphi}\in V$, then we obtain by duality
\begin{align*}
  \big|\E\big[\varphi(X)-\varphi(Y)\big]\big|
  \leq \big\|\tilde{\varphi}\big\|_V
  \big\|X-Y\big\|_{V^*}.
\end{align*}
With a good choice of $V$, the error converges in the $V^\ast$-norm
with twice the rate of convergence in the $L^2(\Omega,H)$-norm, which
is the expected rate of weak convergence. For linear equations we
prove that $V=\mathbf{M}^{1,p,p}(H)$ is a good choice for some
$p>2$. The main part of the error $X-Y$ is then a stochastic
convolution $\int_0^T E(T-t)\diff{W(t)}$. Bounding the error operator
$E(T-t)$ in the appropriate norm yields convergence at the price of a
singularity at $t=T$. By using the inequality \eqref{ineq1:Burkholder}
on this integral with sufficiently large $p=q>2$, we may integrate a
stronger singularity and obtain a higher rate of convergence. For
semilinear equations the main difference is that a term involving
$F(X)-F(Y)$ appears. We then use
$V=\mathbf{G}^{1,p}(H)=\mathbf{M}^{1,p,p}(H)\cap L^{2p}(\Omega,H)$. In
Lemma~\ref{lemma5:Lipschitz} we show that $F\colon V^*\rightarrow V^*$
is locally Lipschitz with a constant depending on
$\|X\|_{\mathbf{M}^{1,2p,p}(H)}$,
$\|Y\|_{\mathbf{M}^{1,2p,p}(H)}$. The choice of a stronger $V$-norm is
necessary in order to control the nonlinearity in this way. After
bounding these norms, we may use a standard Gronwall argument to bound
$\|X-Y\|_{V^\ast}$.

As our method does not rely on the use of Kolmogorov's equation or
It\={o}'s formula, it extends to non-Markovian equations. In the work
\cite{AnderssonKovacsLarsson} our method is used to prove weak
convergence for semilinear stochastic Volterra equations driven by
additive noise. Such equations suffer from the lack of a Kolmogorov
equation and therefore the classical proof is not feasible. We hope
that our method will enable weak error analysis for other
non-Markovian equations such as for instance random evolution PDEs. In
this context we mention the work \cite{buckwar2008} in which
non-Markovian stochastic ordinary delay equations with delay in the
diffusion is treated with a completely different method, relying on
an It\={o} formula from the anticipating stochastic
calculus. For a discussion of the difficulties that arise in
connection with a possible extension to multiplicative noise, see
Subsection~\ref{subsec4:3} below.

An additional advantage of the present work is that we only require
the test function $\varphi$ to be twice differentiable, with
derivatives of polynomial growth. This means, in particular, that we
prove convergence of arbitrary moments with the higher rate. Except in
\cite{kruse2013} for the case of linear equations, the test function
in the previous weak error analysis is assumed to have bounded
derivatives and convergence of moments is treated separately, for
example, in \cite{cohen2012}. 

Moreover, the paper \cite{AnderssonKovacsLarsson} demonstrates that the methods
developed in this paper are also applicable to more general test
functionals, 
which not only evaluate the solution at the final time $T$. For example, 
the convergence result in \cite{AnderssonKovacsLarsson} includes covariances of
the form 
\begin{align*}
  \mathrm{Cov}\big( \big\langle X(t_1), \phi_1 \big\rangle, \big\langle X(t_2),
  \phi_2 \big\rangle \big), \quad \phi_1,\phi_2 \in H, \quad t_1,t_2 \in (0,T],
\end{align*}
as admissible test functions. In addition, our weak error estimate
in Theorem~\ref{thm1:main} is uniform over the time interval
  $[0,T]$ unlike earlier results in the literature.

The paper is organized as follows. In Section~\ref{sec2} we present
preliminary material and our basic assumptions on the stochastic
partial differential equation and the numerical scheme. The core of
the paper is Section~\ref{sec3}, which contains our extensions of the
Malliavin calculus. In \ref{subsec3:1} we introduce the refined
Sobolev-Malliavin spaces and prove that they are well defined. Duality
of our new spaces is treated in \ref{subsec3:2}, with the inequality
\eqref{ineq1:Burkholder} and a local Lipschitz bound as the main
results. In \ref{subsec3:3} and \ref{subsec3:4} regularity in terms of
the new spaces is proved for the solution to the stochastic evolution
equation and its approximation, respectively.  Section~\ref{sec4}
contains the weak convergence analysis. In \ref{subsec4:1} we restrict
the discussion to approximations of the stochastic convolution and in
\ref{subsec4:2} we treat semilinear equations. Finally, in
Section~\ref{sec5} we verify our assumption on the numerical method
for a standard finite element approximation of the heat equation.

\section{Setting and preliminaries}
\label{sec2}
\subsection{Analytic preliminaries}
\label{subsec2:1}
Let $(U,\|\cdot\|_U,\langle\cdot,\cdot\rangle_U)$ and
$(V,\|\cdot\|_V,\langle\cdot,\cdot\rangle_V)$ be separable Hilbert
spaces and let $\LB(U,V)$ be the Banach space of all
bounded linear operators $U\rightarrow V$ equipped with the operator norm. 
If $U=V$, then we write
$\LB(U)=\LB(U,U)$ and if $U=H$, we abbreviate $\LB=\LB(H)$. We denote by
$\LB_2(U,V)\subset\LB(U,V)$ the subspace of all Hilbert-Schmidt operators
endowed with the standard norm and inner product
\begin{align*}
  \|T\|_{\LB_2(U,V)}=\Big(\sum_{j\in\N}\|T u_j\|_V^2\Big)^\frac12,
  \quad \langle S,T\rangle_{\LB_2(U,V)}=\sum_{j\in\N}\langle
  Su_j,Tu_j\rangle_{V},
\end{align*}
where both are independent of the particular choice of ON-basis
$(u_j)_{j\in\N}\subset U$.

For separable Hilbert spaces $U_1,\ldots,U_m$, $m \in \N$, we denote
by $\LB^{[m]}(U_1 \times \cdots \times U_m, V)$ the space of
multi-linear operators $b\colon U_1 \times \cdots \times U_m \to
V$. We use the notation $b\cdot(u_1,\dots,u_m)=b(u_1,\dots,u_m)$ for
$u_i \in U_i$, $i =1,\ldots,m$, to emphasize that $b$ is
multi-linear. If $U=U_1 = \ldots = U_m$ we abbreviate $\LB^{[m]}(U
\times \cdots \times U, V) = \LB^{[m]}(U,V)$. The norm
$\|b\|_{\LB^{[m]}(U_1 \times \cdots \times U_m,V)}$ is the smallest
constant $C$ such that
\begin{align}
  \label{def2:Normb}
  \|b\cdot(u_1,\dots,u_m)\|_V
  \leq C\|u_1\|_{U_1} \cdots \|u_m\|_{U_m},\quad \forall u_i\in U_i,\; i =
  1,\ldots,m.
\end{align}

Let $\Cc(U,V)$ denote the space of all continuous mappings $U\to V$ and further let $\Cc_{\mathrm{str}}(U,\LB^{[m]}(U,V))$ be the space of all strongly continuous mappings $U\to \LB^{[m]}(U,V)$, i.e., mappings 
$B\colon U\to \LB^{[m]}(U,V)$, which for $u_1,\dots,u_m$ satisfy that
\begin{align*}
  U\ni x\mapsto B(x)\cdot (u_1,\dots,u_m)\in V
\end{align*}
is continuous.

We next introduce spaces of differentiable mappings. A mapping $\phi\in\Cc(U,V)$ belongs to $\mathcal{G}^m(U,V)$ if the recursively defined G\^ateaux derivatives 
\begin{align*}
&\phi^{(k)}(x)\cdot (u_1,\dots,u_k)\\
&\quad
 =
  \lim_{\epsilon\to0}
  \epsilon ^{-1}
  \big[
    \phi^{(k-1)}
    (x+\epsilon u_k)
    \cdot
    (u_1,\dots,u_{k-1})
    -
    \phi^{(k-1)}
    (x)
    \cdot
    (u_1,\dots,u_{k-1})
  \big],
\end{align*}
exist as limits in $V$ for $k\in\{1,\dots,m\}$,
$x,u_1,\dots,u_k\in U$, and if $\phi^{(k)}(x)\in\LB^{[k]}(U,V)$ are
symmetric $k$-forms for $k\in\{1,\dots,m\}$, $x\in U$, and if
$\phi^{(k)}\in \Cc_{\mathrm{str}}(U,\LB^{[k]}(U,V))$,
$k\in\{1,\dots,m\}$. We remark that if
  $\phi\in\mathcal{G}^m(U,V)$ has continuous derivatives,
  $\phi^{(k)}\in \Cc(U,\LB^{[k]}(U,V))$ for $k\in\{1,\dots,m\}$, then
 it is actually Fr\'echet differentiable, $\phi\in\mathcal{C}^m(U,V)$. 

Let
$\mathcal{C}_{\mathrm{b}}^m(U,V)\subset \Cb^m(U,V)\subset
\mathcal{G}^m(U,V)$
be the subspaces consisting of $\phi$, whose derivatives
$\phi',\dots,\phi^{(m)}$ are bounded (note that $\phi$ needs not be
bounded), and
$\mathcal{C}_{\mathrm{p}}^m(U,V)\subset
\mathcal{G}_\mathrm{p}^m(U,V)\subset \mathcal{G}^m(U,V)$
denotes the analogous space with derivatives of polynomial growth.  On
$\Cb^m(U,V)$ we use the natural seminorm
$|\phi|_{\Cb^m(U,V)}=\sup_{x\in
  U}\|\phi^{(m)}(x)\|_{\LB^{[m]}(U,V)}$.
We define $\Cb^0(U,V)$ to be all bounded continuous mappings
$U\rightarrow V$, endowed with the uniform norm. The first derivative
of $\phi\in\mathcal{G}^1(U,V)$ is an operator
$\phi'(x)\in\LB(U,V)=\LB^{[1]}(U,V)$ for every $x \in U$. When $V=\R$
we may identify $\phi'(x)\in\LB(U,\R)= U^*$ with its gradient
$\phi'(x)\in U$ via $\phi'(x)\cdot u=\langle\phi'(x),u\rangle_U$ by
the Riesz representation theorem. Similarly, for
$\phi\in\mathcal{G}^2(U,\R)$ we will sometimes identify
$\phi''(x)\in\LB^{[2]}(U,\R)$ with an operator $\phi''(x)\in\LB(U)$
via $\phi''(x)\cdot(u_1,u_2)=\langle \phi''(x)u_1,u_2\rangle_U$. For
$\phi \in \Cp^1(U,V)$, the mapping
$[0,1]\ni \rho \mapsto
\tfrac{\mathrm{d}}{\mathrm{d}\rho}\phi(y+\rho(x-y))=\phi'(y+\rho(x-y))\cdot(x-y)\in
V$ is continuous and Bochner integrable and therefore
\begin{align}
  \label{eq2:Taylor2}
    \phi(x) &= \phi(y) +
    \int_{0}^{1} \phi'( y + \rho( x - y) ) \cdot (x - y ) \diff{\rho},
    \quad x,y\in U.
\end{align}

We will use the following version of Gronwall's Lemma, for a proof see
\cite{elliott1992}*{Lemma 7.1}.

\begin{lemma}
  \label{lemma2:Gronwall}
  Let $T>0$, $N\in\N$, $k=\tfrac{T}N$, and $t_n=nk$ for $0\leq
  n\leq N$. If $(\varphi_j)_{j=1}^N$ are nonnegative real
    numbers with
  \begin{align*}
    \varphi_n \leq
    C_1 \,(1 + t_n^{-1+\mu}) + C_2
    \,k\,\sum_{j=0}^{n-1}t_{n-j}^{-1+\nu}\varphi_j, \quad 1\leq n\leq N,
  \end{align*}
  for some constants $C_1,C_2 \ge 0$ and $\mu,\nu>0$, then there exists a
  constant
  $C=C(\mu,\nu,C_2,T)$ such that
  \begin{align*}
    \varphi_n\leq C\,C_1\,(1 + t_n^{-1+\mu}), \quad 1\leq n\leq N.
  \end{align*}
\end{lemma}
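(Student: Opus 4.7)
The plan is to iterate the given discrete inequality a finite number of times, until the resulting convolution kernel is bounded, and then invoke the classical non-singular discrete Gronwall inequality. Introduce the linear operator on nonnegative sequences $(K\psi)_n := k\sum_{j=0}^{n-1} t_{n-j}^{-1+\nu}\psi_j$ and set $a_n := C_1(1+t_n^{-1+\mu})$, so that the hypothesis reads $\varphi_n \leq a_n + C_2 (K\varphi)_n$. Iterating $m$ times yields
\begin{align*}
\varphi_n \leq \sum_{\ell=0}^{m-1} C_2^\ell (K^\ell a)_n + C_2^m (K^m \varphi)_n.
\end{align*}

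The key analytic ingredient is a discrete Beta-type inequality: for $\alpha,\beta>0$,
\begin{align*}
k \sum_{j=i+1}^{n-1} t_{n-j}^{-1+\alpha}\, t_{j-i}^{-1+\beta} \leq C(\alpha,\beta,T)\, t_{n-i}^{-1+\alpha+\beta},
\end{align*}
which I would prove by comparing the interior of the sum with the integral $\int_{0}^{t_{n-i}}(t_{n-i}-s)^{-1+\alpha}s^{-1+\beta}\diff{s} = B(\alpha,\beta)\,t_{n-i}^{-1+\alpha+\beta}$, while handling the two boundary pieces near $j \approx i$ and $j \approx n$ separately, since they cannot be compared directly to the singular integrand. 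Iterating this bound produces $(K^\ell a)_n \leq \tilde{C}_\ell\, C_1(1+t_n^{-1+\mu})$ for every $\ell \geq 0$: whenever the generated exponent $-1+\mu+\ell\nu$ is nonnegative, the resulting factor is uniformly bounded by a power of $T$, and otherwise $t_n^{-1+\mu+\ell\nu} = t_n^{\ell\nu}\,t_n^{-1+\mu} \leq T^{\ell\nu}\,t_n^{-1+\mu}$.

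Choosing $m$ so that $m\nu \geq 1$ renders the iterated kernel of $K^m$ uniformly bounded, giving $(K^m \varphi)_n \leq C\,k \sum_{j=0}^{n-1} \varphi_j$. Substituting back, I obtain
\begin{align*}
\varphi_n \leq C\,C_1(1+t_n^{-1+\mu}) + C'\,k \sum_{j=0}^{n-1}\varphi_j,
\end{align*}
where $C,C'$ depend only on $\mu,\nu,C_2,T$. An application of the classical non-singular discrete Gronwall inequality, together with the uniform bound $k \sum_{j=0}^{n-1}(1+t_j^{-1+\mu}) \leq C_T$, then yields the claim.

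The main obstacle is the discrete Beta-type inequality uniformly in $k$ and the careful propagation of constants through the $m$ iterations: near the two singular endpoints, the Riemann-sum comparison to $B(\alpha,\beta)\,t_{n-i}^{-1+\alpha+\beta}$ breaks down, and the boundary contributions have to be estimated individually and absorbed into the target bound using $t_{n-j} \geq k$, $t_{j-i} \geq k$, and $k \leq T$. Once this uniform-in-$k$ Beta estimate is established, the rest of the argument is a mechanical iteration followed by a classical Gronwall step.
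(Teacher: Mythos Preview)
The paper does not prove this lemma; it simply cites \cite{elliott1992}*{Lemma~7.1}. Your proposed argument---iterating the convolution inequality via a discrete Beta-type estimate until the kernel exponent $-1+m\nu$ becomes nonnegative, and then closing with the classical (non-singular) discrete Gronwall lemma---is precisely the standard proof of such results and is essentially what one finds in the cited reference. The identification of the boundary terms in the Riemann-sum comparison as the only delicate point is accurate; once the uniform-in-$k$ bound $k\sum_{l=1}^{m-1} t_{m-l}^{-1+\alpha} t_l^{-1+\beta} \le C(\alpha,\beta) t_m^{-1+\alpha+\beta}$ is in hand (split the sum at $l=\lfloor m/2\rfloor$ and bound each half by elementary integral comparison), the remainder is routine. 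One small cosmetic point: in the final Gronwall step your sum $k\sum_{j=0}^{n-1}(1+t_j^{-1+\mu})$ should run from $j=1$, since $t_0=0$, and the contribution of $\varphi_0$ (which appears in the hypothesis but not among the $(\varphi_j)_{j=1}^N$) must be tracked separately as a constant; neither affects the argument.
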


We sometimes write $a\lesssim b$ to denote $a\leq C b$ for some
constant $C>0$. Constants arising from the estimates
\eqref{as1:SAnalytic}, \eqref{as1:SAnalytic2}, \eqref{as1:Analyt}, and
\eqref{as1:ErrorOneStep}, as well as trivial numerical constants, will
be suppressed with this symbol.

\subsection{Stochastic preliminaries}
\label{subsec2:2}
Let $(H,\|\cdot\|,\langle\cdot,\cdot\rangle)$ be a separable Hilbert
space and let $Q\in\LB=\LB(H)$ be a selfadjoint, positive semidefinite
operator on $H$ and $Q^\frac12$ its unique positive square root. The
space $H_0=Q^{\frac12}(H)$ is a Hilbert space with scalar product
$\langle u,v\rangle_{H_0}=\langle Q^{-\frac12} u,Q^{-\frac12}
v\rangle$. We denote by $\LB_2^0=\LB_2(H_0,H)$ the space of
Hilbert-Schmidt operators $H_0\rightarrow H$. We consider a filtered
probability space
$(\Omega,\mathcal{F},(\mathcal{F}_t)_{t\in[0,T]},\mathbf{P})$ and the
corresponding Bochner spaces $L^p(\Omega,
V)=L^p((\Omega,\mathcal{F},\mathbf{P}),V)$, $p\in[1,\infty]$, $V$ a
Banach space. We abbreviate
$L^2(\Omega)=L^2(\Omega,\R)$.
We assume that
$(W(t))_{t\in[0,T]}$ is a cylindrical $Q$-Wiener process, meaning that
$W\in \mathcal{C}([0,T],\LB(H_0,L^2(\Omega)))$ is such that $t\mapsto W(t)u$ is
an $\mathcal{F}_t$-predictable real-valued Brownian motion for every
$u\in H_0$ and
\begin{align*}
\E\big[W(s)u\, W(t)v\big]=\min(s,t)\langle u,v\rangle_{H_0},
\quad u,v\in H_0,\ s,t\in[0,T].
\end{align*}
For predictable $\Phi\in L^2([0,T]\times\Omega,\LB_2^0)$ the $H$-valued
stochastic It\=o-integral
\begin{align*}
  \int_0^T\Phi(t)\diff{W(t)}\in L^2(\Omega,H),
\end{align*}
is a well defined random variable. For details on the construction of
cylindrical Wiener processes and the corresponding stochastic integral
we refer to \cites{daprato1992, roeckner2007, UMD}.  For technical
reasons we assume that the $\sigma$-field $\F$ is generated by
$(W(t))_{t \in [0,T]}$ and the filtration
$(\mathcal{F}_t)_{t\in[0,T]}$ is the natural filtration associated
with $(W(t))_{t \in [0,T]}$.

We cite the following special case of Burkholder's inequality
\cite{daprato1992}*{Lemma 7.2}.
\begin{lemma}
  \label{lemma2:Burkholder}
  Let $(\Phi(t))_{t\in[0,T]}$ be a predictable and $\LB_2^0$-valued process
  such that $\|\Phi\|_{L^p(\Omega,L^2([0,T],\LB_2^0))} < \infty$ for some $p
  \ge 2$. Then there exists a constant $C_p$, such that
  \begin{equation*}
    \Big\|\int_0^T\Phi(s)\diff{W(s)}\Big\|_{L^p(\Omega,H)}\leq
    C_p\|\Phi\|_{L^p(\Omega,L^2([0,T],\LB_2^0))}.
  \end{equation*}
\end{lemma}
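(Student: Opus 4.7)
The plan is to reduce the infinite-dimensional, $p$-th moment bound to a combination of Itô's isometry and Itô's formula applied to the function $x\mapsto\|x\|^p$ on $H$. Write $M(t)=\int_0^t\Phi(s)\diff{W(s)}$, which is an $H$-valued continuous square-integrable martingale under our integrability hypothesis; the quadratic variation process is $\langle\!\langle M\rangle\!\rangle_t=\int_0^t\|\Phi(s)\|_{\LB_2^0}^2\diff{s}$. By a density argument (elementary processes are dense in $L^p(\Omega,L^2([0,T],\LB_2^0))$ for predictable processes) it suffices to prove the inequality for sufficiently regular $\Phi$ for which all manipulations below are legitimate, and then pass to the limit using completeness of $L^p(\Omega,H)$ on the left and continuity of the right side under the approximation.

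The base case $p=2$ is just Itô's isometry, giving $\|M(T)\|_{L^2(\Omega,H)}^2=\E\int_0^T\|\Phi(s)\|_{\LB_2^0}^2\diff{s}$, which is in fact equality with $C_2=1$. For $p>2$, I apply Itô's formula to $f(x)=\|x\|^p$. Since this function is not $C^2$ at the origin for non-integer $p$, the standard remedy is to use the smoothed approximation $f_\varepsilon(x)=(\|x\|^2+\varepsilon)^{p/2}$, which is $C^2$ with $f_\varepsilon'(x)=p(\|x\|^2+\varepsilon)^{p/2-1}x$ and a second derivative whose trace against $\Phi\Phi^*$ is controlled by $C_p(\|x\|^2+\varepsilon)^{p/2-1}\|\Phi\|_{\LB_2^0}^2$. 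Applying Itô's formula, taking expectations to kill the martingale term, and then letting $\varepsilon\downarrow0$ (using monotone/dominated convergence) yields the inequality
\begin{align*}
  \E\|M(t)\|^p\leq C_p\,\E\int_0^t\|M(s)\|^{p-2}\|\Phi(s)\|_{\LB_2^0}^2\diff{s}.
\end{align*}

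Now I apply Hölder's inequality inside the expectation with conjugate exponents $\tfrac{p}{p-2}$ and $\tfrac{p}{2}$ to split the integrand:
\begin{align*}
  \E\|M(t)\|^p\leq C_p\Bigl(\E\sup_{0\le s\le t}\|M(s)\|^p\Bigr)^{\frac{p-2}{p}}\Bigl(\E\Bigl(\int_0^t\|\Phi(s)\|_{\LB_2^0}^2\diff{s}\Bigr)^{p/2}\Bigr)^{2/p}.
\end{align*}
To absorb the first factor one needs the analogous estimate for the supremum; this is obtained by applying the scalar Burkholder-Davis-Gundy inequality to the real-valued martingale appearing in the Itô expansion of $f_\varepsilon(M(t))$, which after Hölder produces the same right-hand side structure. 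One then uses Young's inequality of the form $ab\le\eta a^{p/(p-2)}+C_\eta b^{p/2}$ with a small $\eta$ to absorb $\E\sup_{s\le T}\|M(s)\|^p$ into the left side, arriving at the desired bound at $t=T$.

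The main obstacle is the combined approximation: passing from $f$ to $f_\varepsilon$, handling the local martingale (rather than martingale) nature of the stochastic integral term in Itô's expansion via a localising sequence of stopping times, and then absorbing the supremum term through the BDG/Young step. Once these technicalities are arranged uniformly in the localising parameters and in $\varepsilon$, monotone convergence gives the conclusion with an explicit $C_p$ depending only on $p$.
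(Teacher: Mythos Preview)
The paper does not prove this lemma; it simply cites it as a special case of \cite{daprato1992}*{Lemma 7.2}. Your sketch is essentially the standard argument found there, so in that sense it is correct and aligned with what the paper relies on.

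One simplification worth noting: to handle the supremum factor you invoke the scalar Burkholder--Davis--Gundy inequality on the local-martingale part of the It\^o expansion of $f_\varepsilon(M)$, which is workable but somewhat roundabout. A cleaner route is to observe that $t\mapsto\|M(t)\|$ is a nonnegative submartingale (the norm is convex), so Doob's $L^p$ maximal inequality gives directly
\[
  \E\Big[\sup_{0\le s\le T}\|M(s)\|^p\Big]\le\Big(\tfrac{p}{p-1}\Big)^p\E\big[\|M(T)\|^p\big].
\]
Combining this with your displayed inequality
\[
  \E\|M(T)\|^p\le C_p\Big(\E\sup_{s\le T}\|M(s)\|^p\Big)^{\frac{p-2}{p}}\Big(\E\Big(\int_0^T\|\Phi(s)\|_{\LB_2^0}^2\diff{s}\Big)^{p/2}\Big)^{2/p}
\]
and then absorbing via Young's inequality closes the argument without any appeal to BDG. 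The localisation and $\varepsilon\downarrow0$ steps are then routine, exactly as you outlined.
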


\subsection{The stochastic equation}
\label{subsec2:3}
We study equation \eqref{eq1:SPDE} under the following assumption and
recall that the solution $X$ takes values in $H$.
\begin{assumption}
  \label{as1:A}
  \begin{enumerate}[\upshape (i)]
    \item
      Let $(A,\D(A))$ be a linear operator on $H$ such that $A^{-1}\in\LB(H)$
      exists and $-A$ is the generator of an analytic semigroup
      $(S(t))_{t\geq0}$ of bounded linear operators $S(t)=\mathrm{e}^{-tA}$ on
      $H$.
    \item The initial value $X_0$ is
      deterministic and satisfies $X_0\in \dot{H}^{2\beta}$, for some
      $\beta\in(0,1]$, where $\dot{H}^{\alpha}\subset H$ denotes the
      domain of $A^{\frac\alpha2}$.
    \item
      The covariance operator $Q$ satisfies
      $\|A^{\frac{\beta-1}2}\|_{\LB_2^0}
      =\|A^{\frac{\beta-1}2}Q^\frac12\|_{\LB_2}<\infty$, for the same
      $\beta$ as in {\upshape(ii)}.
    \item The drift $F\colon H\rightarrow H$ is assumed to be twice
      differentiable in the sense $F\in\Cb^1(H,H)\cap\Cb^2(H,\dot{H}^{-1})$,
      where $\dot{H}^{-1}$ is defined below.
  \end{enumerate}
\end{assumption}

Under Assumption~\ref{as1:A} (i) the fractional powers $A^{\frac{r}2}$
for $r\in\R$ are well defined, see \cite{pazy1983}*{Section 2.6}. We
define the norms $\|v\|_r =\|A^{\frac{r}2}v\|$ and let
$\dot{H}^r=\D(A^\frac{r}2)$ for $r\geq0$. For $r<0$ we define
$\dot{H}^r$ as the closure of $H$ under the norm $\|v\|_r$. The spaces
$\dot{H}^r\subset H\subset\dot{H}^{-r}$ form a Gelfand triple for
$r>0$.

The analytic semigroup $(S(t))_{t\ge0}$ generated by $-A$
satisfies, see \cite{pazy1983}*{Section 2.6},
\begin{alignat}{2}
  \label{as1:SAnalytic}
  \|A^\varrho S(t)\|_{\LB}
  &\leq C_\varrho t^{-\varrho}, &
  \quad &t>0,\ \varrho\geq0,\\
  \label{as1:SAnalytic2}
  \|(S(t)-I)A^{-\varrho}\|_{\LB}
  &\leq C_\varrho t^{\varrho}, &
  \quad &t\ge0,\ 0<\varrho\leq1.
\end{alignat}

Under Assumption~\ref{as1:A}, the stochastic equation
\eqref{eq1:SPDE} has a mild solution $X\in
\Cc([0,T],L^p(\Omega,H))$, for every $p\geq2$, in the
sense that it satisfies the integral equation
\begin{equation}
  \label{eq1:SPDEmild}
    X(t)=S(t)X_0+\int_0^tS(t-s)F(X(s))\diff{s}+\int_0^tS(t-s)\diff{W(s)},\quad
    t\in[0,T],
\end{equation}
and
\begin{align}
  &\label{ineq2:moment}\sup_{t\in[0,T]}\|X(t)\|_{L^p(\Omega,H)}\leq
  C(1+\|X_0\|).
\end{align}
For every $\gamma \in [0,\beta)$ the solution satisfies $X(t)\in
\dot{H}^\gamma$, $\mathbf{P}$-a.s., for all $t\in[0,T]$. For more
details we refer to \cite{daprato1992}, \cite{jentzen2010b},
\cite{kl2010a}, and the references therein.

In \cite{AnderssonLarsson} and \cite{debussche2011} the authors assume
$F\in\mathcal{C}_{\mathrm{b}}^2(H,H)$, which works well for the analysis but has the
following disadvantage: If $D\subset \R^d$, $d=1,2,3$, $H=L^2(D)$ and
$F\colon H\rightarrow H$ is a Nemytskii operator, i.e., a mapping in
the form $g\mapsto F(g)=f(g(\cdot))$, where $f\in\mathcal{C}_{\mathrm{b}}^2(\R,\R)$, then in general $F\not\in \mathcal{C}_{\mathrm{b}}^1(H,H)$. This
disqualifies the most interesting examples of nonlinearities $F$. On
the other hand $F\in \Cb^1(H,H)$ and by the Sobolev embedding theorem
$F\in\Cb^2(H,\dot{H}^{-\frac{d}2+\epsilon})$ for $\epsilon>0$ and
hence Assumption~\ref{as1:A} admits Nemytskii operators for $d=1$. See
\cite{Wang}*{Example 5.1} for a verification. For $d=2,3$ one needs to
assume $F\in\Cb^2(H,\dot{H}^{-s})$ with $s>1$, which works for
spectral Galerkin approximations but not for the finite element method
due to the restriction on $\varrho$ in \eqref{as1:AhPhA} below. In
\cite{AnderssonKovacsLarsson} this restriction is removed, allowing
for finite element discretization also for $d=2,3$. Papers that
include Nemytskii operators are \cite{Brehier}, \cite{Brehier3},
\cite{hausenblas2010}, \cite{Wang}, \cite{Wang2014} and our
Assumption~\ref{as1:A} (iv) is a reformulation of
\cite{Wang}*{Assumption 5.1}.

\subsection{Approximation of the solution}
\label{subsec2:4}
We approximate equation \eqref{eq1:SPDE} in finite-dimensional
approximation spaces $V_{h}\subseteq H$, ${h}\in(0,1]$. The
parameter ${h}\in(0,1]$ is a refinement parameter. We denote by
$P_{h}\colon H \to V_{h}$ the orthogonal projector onto $V_{h}$ and by
$(A_{h})_{{h}\in(0,1]}$ a family of operators $A_{h}\colon
V_{h}\rightarrow V_{h}$ approximating $A$. The assumptions on
$(V_{h})_{{h} \in (0,1]}$, and $(A_{h})_{{h}\in(0,1]}$ are given in
Assumption~\ref{as1:Scheme} below.

For the time discretization let $k \in (0,1)$ be the constant step
size. We define the discrete time points by $t_n=nk$, $n=0,\dots,N$,
where $N = N(k) \in \N$ is determined by $t_N \le T < t_{N} + k$.  We
define the operator $S_{{h},k}=(I+kA_{{h}})^{-1}P_h$ and notice that
$S_{h,k} Q^\frac12 \in \LB_2(H)$, since $S_{h,k}$ is a finite rank
operator. Hence, it is a valid integrand for the stochastic
integral. Our completely discrete scheme is to find the recursive
sequence $(X_{{h},k}^n)_{n=0}^N \subset V_h$ given by the
semi-implicit Euler-Maruyama method:
\begin{equation}
  \label{eq1:SPDEiterat}
   \begin{split}
    &X_{{h},k}^{n+1}=S_{{h},k}X_{{h},k}^n+kS_{{h},k}F(X_{{h},k}^n)
    +\int_{t_n}^{t_{n+1}}S_{{h},k}\diff{W(s)},\quad n=0,\dots,N-1;\\
    &X_{{h},k}^0=P_{h} X_0.
  \end{split}
\end{equation}
By iterating \eqref{eq1:SPDEiterat} we obtain the discrete analog of
\eqref{eq1:SPDEmild}
\begin{equation}
  \label{eq3:SPDEfulldisc}
  \begin{split}
    X_{{h},k}^n&=S_{{h},k}^nP_{h} X_0+k\sum_{j=0}^{n-1}S_{{h},k}^{n-j}
    F(X_{{h},k}^j)
\\ & \quad +\sum_{j=0}^{n-1}\int_{t_j}^{t_{j+1}}
    S_{{h},k}^{n-j}\diff{W(t)},\quad n=0,\dots,N.
  \end{split}
\end{equation}
Further, we define the error operators $E_{{h},k}^n$, $h, k \in (0,1]$, by
\begin{align}
 \label{def1:ErrorOp}
E_{{h},k}^n := S(nk)- S_{{h},k}^n.
\end{align}
We now state our assumption on the numerical discretization.

\begin{assumption}
  \label{as1:Scheme}
  The linear operators $A_{h}\colon V_{h} \rightarrow V_{h}$ and the
  orthogonal projectors $P_h \colon H \to V_h$, ${h}\in(0,1]$,
  satisfy
  \begin{align}
    \label{as1:Analyt}
    \|A_h^\varrho S_{{h},k}^n  \|_{\LB} &\leq C t^{-\varrho}_n,\quad
    n = 1,\ldots,N,\quad \varrho\ge0,\\
    \label{as1:AhPhA}
    \| A_h^{-\varrho} P_h A^{\varrho} \|_{\LB} &\leq C,
   \quad 0 \le \varrho \le \tfrac12,
  \end{align}
  uniformly in $h,k \in (0,1]$,
  and, for $0\leq\theta\leq 2$, $-\theta\leq\varrho\leq\min(1,2-\theta)$,
  \begin{align}
    \label{as1:ErrorOneStep}
    \| E_{{h},k}^n {A^\frac\varrho2}\|_{\LB} \leq C\big(h^{\theta}
    +k^{\frac\theta2}\big) t_n^{-\frac{\theta+\varrho}2},
    \quad n = 1,\ldots,N.
  \end{align}
\end{assumption}

We emphasize that the restriction $\varrho\le\frac12$ in
\eqref{as1:AhPhA} is dictated by our desire to include standard finite
element spaces, for which $V_h\subset \dot{H}^{1}$, and no better.
We remark that the error estimate \eqref{as1:ErrorOneStep} is
non-standard, due to the low regularity regime we consider. In fact,
when $\varrho\ge0$, it corresponds to an error estimate for the
deterministic linear equation with rough initial data, i.e.,
$S(t)X_0=S(t) A^{\frac\varrho2}x$ with $x\in H$, so that
$X_0=A^{\frac\varrho2}x\in \dot{H}^{-\varrho}$. We verify
\eqref{as1:ErrorOneStep} in Section~\ref{sec5} for the finite element
method and the heat equation by means of interpolation techniques,
using already established results from \cites{kruse2012,kruse2013}. By
\cite{kruse2012}*{Example 3.4}, spectral Galerkin approximations also
fit under our Assumption~\ref{as1:Scheme}.

Finally, for future reference, we formulate an important consequence
of the smoothing properties \eqref{as1:SAnalytic} and
\eqref{as1:Analyt}, \eqref{as1:AhPhA}, respectively, in conjunction
with the assumption on the covariance operator in
Assumption~\ref{as1:A}~(iii).

\begin{lemma} \label{lemma:Slqbound} Let Assumptions~\ref{as1:A} and
  \ref{as1:Scheme} hold with $\beta\in[0,1]$. Let
  $q\in[2,\frac{2}{1-\beta})$ with $q=\infty$ allowed if
  $\beta=1$. Then
\begin{align*}
  \|S\|_{L^q([0,T],\LB_2^0)}
  \le C \|A^{\frac{\beta-1}2}\|_{\LB_2^0}
\end{align*}
and
\begin{align*}
  \Big(k\sum_{j = 1 }^{N} \| S_{h,k}^{j}\|_{\LB_2^0}^q\Big)^{1/q}
  \le C \|A^{\frac{\beta-1}2}\|_{\LB_2^0}.
\end{align*}
\end{lemma}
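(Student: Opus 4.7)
The plan is to obtain a pointwise-in-time bound of the form $\|S(t)\|_{\LB_2^0}\lesssim t^{-(1-\beta)/2}\|A^{(\beta-1)/2}\|_{\LB_2^0}$ and its discrete analogue $\|S_{h,k}^j\|_{\LB_2^0}\lesssim t_j^{-(1-\beta)/2}\|A^{(\beta-1)/2}\|_{\LB_2^0}$, then integrate or sum using that the exponent $q(1-\beta)/2<1$ by hypothesis, so that the singularity at $t=0$ is $L^q$-integrable.

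For the continuous bound I would factor
\begin{align*}
 S(t)Q^{\frac12} = A^{\frac{1-\beta}{2}}S(t)\cdot A^{\frac{\beta-1}{2}}Q^{\frac12}
\end{align*}
and estimate $\|S(t)Q^{\frac12}\|_{\LB_2}\leq\|A^{(1-\beta)/2}S(t)\|_{\LB}\,\|A^{(\beta-1)/2}Q^{\frac12}\|_{\LB_2}$. The first factor is bounded by $Ct^{-(1-\beta)/2}$ by the smoothing estimate \eqref{as1:SAnalytic} (applied with $\varrho=(1-\beta)/2\ge0$), and the second equals $\|A^{(\beta-1)/2}\|_{\LB_2^0}$ by Assumption~\ref{as1:A}(iii). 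Raising to the $q$-th power and integrating over $[0,T]$ yields the claim since $\int_0^T t^{-q(1-\beta)/2}\diff{t}<\infty$ for $q<\tfrac{2}{1-\beta}$. The case $\beta=1$, $q=\infty$ is immediate because then $\|S(t)Q^{\frac12}\|_{\LB_2}\leq\|S(t)\|_{\LB}\,\|Q^{\frac12}\|_{\LB_2}$ with $\|S(t)\|_{\LB}$ uniformly bounded on $[0,T]$.

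For the discrete bound I would use that for $j\ge1$ one has $S_{h,k}^j=(I+kA_h)^{-j}P_h$ (since $S_{h,k}^{j-1}$ already takes values in $V_h$), and write
\begin{align*}
 S_{h,k}^j Q^{\frac12}=\bigl[A_h^{\frac{1-\beta}{2}}S_{h,k}^j\bigr]\cdot\bigl[A_h^{-\frac{1-\beta}{2}}P_hA^{\frac{1-\beta}{2}}\bigr]\cdot A^{\frac{\beta-1}{2}}Q^{\frac12}.
\end{align*}
The first bracket is controlled by $Ct_j^{-(1-\beta)/2}$ via \eqref{as1:Analyt}, the second by a constant via \eqref{as1:AhPhA} (here the hypothesis $\beta\in[0,1]$ ensures $(1-\beta)/2\le\tfrac12$ as required), and the third contributes $\|A^{(\beta-1)/2}\|_{\LB_2^0}$. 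Summing then gives
\begin{align*}
 k\sum_{j=1}^{N}\|S_{h,k}^j\|_{\LB_2^0}^q\lesssim\|A^{\frac{\beta-1}{2}}\|_{\LB_2^0}^q\,k\sum_{j=1}^{N}t_j^{-\frac{q(1-\beta)}{2}},
\end{align*}
and the Riemann-type sum on the right is bounded uniformly in $k$ by a constant depending only on $T$ and $q(1-\beta)/2<1$, since $k\sum_{j=1}^N(jk)^{-a}=k^{1-a}\sum_{j=1}^N j^{-a}\lesssim(Nk)^{1-a}\le T^{1-a}$.

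The only real subtlety is the interplay between $A_h$ and $A$ at the level of the projector $P_h$; this is precisely the role of \eqref{as1:AhPhA}, and the restriction $\varrho\le\tfrac12$ there is exactly compatible with $(1-\beta)/2\le\tfrac12$. No other step presents difficulty; both bounds reduce to an integrability condition on $t^{-q(1-\beta)/2}$ that matches the stated range of $q$.
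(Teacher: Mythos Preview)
Your proposal is correct and follows essentially the same approach as the paper: factor $S(t)$ (respectively $S_{h,k}^j$) through $A^{\frac{1-\beta}{2}}$ (respectively $A_h^{\frac{1-\beta}{2}}$) to invoke the smoothing estimates \eqref{as1:SAnalytic} and \eqref{as1:Analyt}, use \eqref{as1:AhPhA} to pass from $A_h^{-\frac{1-\beta}{2}}P_h$ to $A^{-\frac{1-\beta}{2}}$, and then integrate or sum the resulting $t^{-q(1-\beta)/2}$ singularity. Your handling of the Riemann sum and of the case $\beta=1$, $q=\infty$ is also in line with the paper's (terser) treatment.
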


\begin{proof}
  Let first $q<\infty$.  By \eqref{as1:SAnalytic} with
  $\varrho=\frac{1-\beta}{2}$ we get
  \begin{align*}
    \|S\|_{L^q([0,T],\LB_2^0)}^q
    &= \int_0^T \|S(t)\|_{\LB_2^0}^q \diff{t}
    \le \int_0^T \|A^{\frac{1-\beta}{2}}S(t)\|_{\LB}^q \diff{t} \,
    \|A^{\frac{\beta-1}2}\|_{\LB_2^0}^q
    \\ &
    \le C\int_0^T t^{-q\frac{1-\beta}{2}} \diff{t} \,
    \|A^{\frac{\beta-1}2}\|_{\LB_2^0}^q
    \le C \|A^{\frac{\beta-1}2}\|_{\LB_2^0}^q .
  \end{align*}
  For the second inequality we use instead \eqref{as1:Analyt},
  \eqref{as1:AhPhA} with $\varrho=\frac{1-\beta}{2}$ to get
  \begin{align*}
    \| S_{h,k}^{j}\|_{\LB_2^0}
    &\le \| A_h^{\frac{1-\beta}{2}} S_{h,k}^{j}\|_{\LB} \|A_h^{\frac{\beta-1}2}P_h\|_{\LB_2^0}
    \\ &
     \le \| A_h^{\frac{1-\beta}{2}} S_{h,k}^{j}\|_{\LB}
    \|A_h^{\frac{\beta-1}2}P_h A^{\frac{1-\beta}2}\|_{\LB}\|A^{\frac{\beta-1}2}\|_{\LB_2^0}
    \\ &
    \le C t_j^{-\frac{1-\beta}{2}}
   \|A^{\frac{\beta-1}2}\|_{\LB_2^0},
  \end{align*}
which can be summed as desired.  The case when $q=\infty$, $\beta=1$
is now obvious.
\end{proof}

\section{Malliavin calculus}
\label{sec3}

The papers \cite{grorud1992} and \cite{leon1998} are the earliest
works to treat Malliavin calculus for stochastic evolution equations
in the Hilbert space framework. Later it was used in several papers
related to optimal control of stochastic partial differential
equations, in particular, in connection with backward stochastic
differential equations \cite{FuhrmanTessitore} and backward stochastic
Volterra integral equations in Hilbert spaces \cite{BSVIE}. Malliavin
differentiability of solutions to stochastic evolution equations is
proved in \cite{FuhrmanTessitore}. There are also works using the
Malliavin calculus for specific equations outside the setting of the
present paper and it is more extensively developed for equations
studied in the framework of \cite{walsh1986}, see the book
\cite{SanzSole}. We mention also the papers \cite{AnderssonLarsson},
\cite{Brehier}, \cite{Brehier2}, \cite{buckwar2008},
\cite{KohatsuHiga2}, \cite{debussche2011},
\cite{hausenblas2003Weak}, \cite{hausenblas2010}, \cite{KohatsuHiga1},
\cite{kopecthesis}*{Chapt.~5}, \cite{WangGan}, where the Malliavin calculus is applied to the problem
of proving weak convergence. Below we take a new direction and
introduce in Subsection~\ref{subsec4:1} a family of refined
Sobolev-Malliavin spaces. We show in Subsection~\ref{subsec4:2} that
these spaces are particularly useful in connection with duality.

\subsection{Refined Sobolev-Malliavin spaces}
\label{subsec3:1}
Let $I\colon L^2([0,T],H_0)\rightarrow L^2(\Omega)$ be the mapping given by
\begin{align*}
I(\phi)=\int_0^T \phi(t)\diff{W(t)},\quad\phi\in L^2([0,T],H_0),
\end{align*}
where we identify $L^2([0,T],H_0)\cong L^2([0,T],\LB_2(H_0,\R))$. This
identification is important since an $\R$-valued stochastic integral has an
$L^2([0,T],\LB_2(H_0,\R))$-valued integrand. Fix an ON-basis
$(\phi_j)_{j \in \N} \subset L^2([0,T],H_0)$, let $\mathcal{P}_n$ be the
set of random variables given by $n$:th order
polynomials of the random variables $(I(\phi_j))_{j\in\N}$. The set
$\mathcal{P}=\cup_{n\in\N}\mathcal{P}_n$ is independent of the choice of basis,
see \cite{Janson}, and
\begin{align}
  \label{WienerChaos}
  \mathcal{P} \subset L^p(\Omega)\  \textrm{is dense
  for}\
  1\leq p<\infty.
\end{align}

Let $2\leq q\leq\infty$ and let the mapping $i\colon L^q([0,T],H_0)\rightarrow
L^2([0,T],H_0)$ denote the canonical embedding. Let $\mathcal{S}^q$ be the
set of random variables $F$ of the form
\begin{align}
  \label{def:Sp}
  \begin{aligned}
  F&=f(I(i(\phi_1)),\dots,I(i(\phi_n))), \\
  f&\in
  \mathcal{C}_{\mathrm{p}}^1(\R^n,\R ),\  (\phi_j)_{j=1}^n\subset
  L^q([0,T],H_0),\ n\in\N.
  \end{aligned}
\end{align}
The class $\mathcal{S}^2$ is standard in Malliavin calculus and is usually
denoted by $\mathcal{S}$. Our definition coincides with that in \cite{kruse2013}
but in the standard work \cite{nualart2006} and many other works
$\mathcal{C}_\mathrm{p}^\infty(\R^n,\R)$ is used instead of
$\mathcal{C}_\mathrm{p}^1(\R^n,\R)$. The classes $\mathcal{S}^q$ for
$q>2$ are new to our knowledge.

\begin{lemma}
  \label{lemma3:dense}
  For $1\leq p<\infty$ and $2\leq q\leq\infty$, $\mathcal{S}^{q}\subset
  L^p(\Omega)$ is dense.
\end{lemma}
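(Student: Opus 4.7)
The plan is to exploit the basis-independence of the polynomial chaos class $\mathcal{P}$ asserted around \eqref{WienerChaos}. Since $\mathcal{P}\subset L^p(\Omega)$ is already dense, it suffices to exhibit one ON-basis of $L^2([0,T],H_0)$ for which the resulting set of basis polynomials is contained in $\mathcal{S}^q$.

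First I would construct an ON-basis $(\phi_j)_{j\in\N}$ of $L^2([0,T],H_0)$ whose elements all lie in $L^q([0,T],H_0)$. This is possible because $L^q([0,T],H_0)$ embeds continuously and densely into $L^2([0,T],H_0)$ for every $q\in[2,\infty]$ on the finite interval $[0,T]$: the $H_0$-valued simple functions belong to $L^q$ for every such $q$ and are dense in $L^2([0,T],H_0)$. Applying Gram--Schmidt in $L^2([0,T],H_0)$ to any countable $L^2$-dense subset of $L^q([0,T],H_0)$ therefore produces the required basis.

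Next, any polynomial $p\colon\R^m\to\R$ is smooth and has polynomial growth together with all of its derivatives, so $p\in\mathcal{C}_{\mathrm{p}}^1(\R^m,\R)$. Given $F\in\mathcal{P}_n$ expressed as such a polynomial in $(I(\phi_{j_1}),\dots,I(\phi_{j_m}))$ with $\phi_{j_k}$ taken from the basis above, and denoting by $i\colon L^q([0,T],H_0)\to L^2([0,T],H_0)$ the canonical embedding, we may rewrite $F=p(I(i(\phi_{j_1})),\dots,I(i(\phi_{j_m})))$, which exhibits $F$ as an element of $\mathcal{S}^q$. Thus $\mathcal{P}\subset\mathcal{S}^q\subset L^p(\Omega)$, and the desired density of $\mathcal{S}^q$ in $L^p(\Omega)$ is inherited from the density of $\mathcal{P}$ stated in \eqref{WienerChaos}.

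The only delicate step is the appeal to the basis-independence of $\mathcal{P}$, which is precisely what legitimizes passing to a basis inside the smaller space $L^q([0,T],H_0)$ without losing the $L^p(\Omega)$-density; otherwise the argument is purely formal. An alternative, more computational route would be to fix any $L^2$-basis, approximate each $\phi_j$ by elements of $L^q$, and propagate the convergence through $I(\cdot)$ using the Gaussian isometry in combination with the continuity and polynomial growth of elements of $\mathcal{C}_{\mathrm{p}}^1$ via a uniform integrability argument; but the basis-change approach above is noticeably cleaner.
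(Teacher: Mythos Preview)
Your proposal is correct and follows essentially the same approach as the paper: both arguments reduce the claim to exhibiting an ON-basis of $L^2([0,T],H_0)$ contained in $L^q([0,T],H_0)$ and then invoke the basis-independence of $\mathcal{P}$ from \eqref{WienerChaos} to conclude $\mathcal{P}\subset\mathcal{S}^q$. The only cosmetic difference is that the paper constructs the basis explicitly via the tensor decomposition $L^2([0,T],H_0)\cong L^2([0,T],\R)\otimes H_0$, taking a bounded ON-basis of $L^2([0,T],\R)$ tensored with an ON-basis of $H_0$, whereas you obtain it abstractly by Gram--Schmidt on a dense subset; both constructions are standard and neither offers a real advantage over the other.
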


\begin{proof}
  Without causing confusion we also let $i$ denote the canonical embedding from
  $L^q([0,T],\R)$ to $L^2([0,T],\R)$. We notice the isomorphism
  $L^2([0,T],H_0)\cong L^2([0,T],\R)\otimes H_0$.

  Since there even exists a bounded ON-basis of the space $L^2([0,T],\R)$
  we clearly find a sequence $(f_n)_{n\in\N}\subset L^q([0,T],\R)$ such that
   $(i(f_n))_{n\in\N}$ is an ON-basis for $L^2([0,T],\R)$.
  If $(h_n)_{n\in\N}$ is an ON-basis for $H_0$, then $(i(f_m)\otimes
  h_n)_{m,n\in\N}$ is an ON-basis for
  $L^2([0,T],\R)\otimes H_0$. In particular, we have that $i(f_m\otimes h_n)=
  i(f_m)\otimes h_n$.

  Since the result \eqref{WienerChaos} is independent of the choice of the
  basis, we conclude our assertion by using the sequence $(I(i(f_m\otimes
  h_n)))_{m,n\in\N}$.
\end{proof}

For $1\leq p<\infty$ and $2\leq q\leq\infty$ we
define the action of the Malliavin derivative $D\colon
\mathcal{S}^{q}\rightarrow L^p(\Omega,L^q([0,T],H_0))$ on a random variable $F$
of the form \eqref{def:Sp} by
\begin{align*}
D_t\,F=\sum_{j=1}^n\partial_j f(I(i(\phi_1)),\dots,I(i(\phi_n)))\otimes\phi_j(t), \quad t\in[0,T].
\end{align*}
This is well defined because $\phi_1,\dots,\phi_n\in L^q([0,T],H_0)$, the
random variables $I(\phi_1),\dots,I(\phi_n)$ are Gaussian with all existing
moments and since $f$ has polynomial growth. By a direct modification of
\cite{kruse2013}*{Proposition 4.2} it does not depend on the specific
representation of $F$.

We remark that for $q = 2$ the linear operator $D \colon
\mathcal{S}^{2} \to L^p(\Omega,L^2([0,T],H_0))$ is the standard
Malliavin derivative. Technically speaking, we have restricted the
domain of the Malliavin derivative to $\mathcal{S}^q \subset
\mathcal{S}^2$ for $2<q \leq\infty$. By this we have ensured that
$D|_{\mathcal{S}^q}$ maps into the smaller space
$L^p(\Omega,L^q([0,T],H_0)) \subset L^p(\Omega,L^2([0,T],H_0))$.

We define the Malliavin derivative for $H$-valued random variables as
in \cite{kruse2013}*{Chapt.~4}, \cite{nualart2006}*{Chapt.~1}. For
this we denote by $\mathcal{S}^{q}(H)$ the collection of all
$H$-valued smooth random variables of the form
\begin{align}
\label{eq:SqH}
  X=\sum_{j=1}^n h_j\otimes F_j,\quad h_1,\dots, h_n\in H,\quad
  F_1,\dots,F_n\in\mathcal{S}^q,\ n\in\N.
\end{align}
Since $H$ is separable and by Lemma~\ref{lemma3:dense} it follows that
$\mathcal{S}^q(H)$ is dense in $L^p(\Omega,H)$ for all $1 \le p < \infty$.
The Malliavin derivative $D\colon\mathcal{S}^q(H)\rightarrow
L^p(\Omega,L^q([0,T],\LB_2^0))$ acts in the following way:
\begin{align*}
  D_tX=D_t\sum_{j=1}^n h_j\otimes F_j=\sum_{j=1}^n h_j\otimes D_tF_j, \quad
  t\in[0,T].
\end{align*}
Here we did the identifications
\begin{align*}
H\otimes L^p(\Omega,L^q([0,T],H_0)) \cong L^p(\Omega,H \otimes L^q([0,T],H_0))
\cong L^p(\Omega,L^q([0,T],\LB_2^0)).
\end{align*}
We write $D_t^uX=D_tXu\in L^2(\Omega,H)$ for the derivative in the direction
$u\in H_0$.

In the final step of its construction we extend the domain of the Malliavin
derivative to its closure with respect to the graph norm. For this we recall
that an unbounded operator $A\colon U\rightarrow V$ is \emph{closable} if and
only if for every $(u_n)_{n\in\N}\subset U$ such that
$\lim_{n\rightarrow\infty}u_n=0$ and $\lim_{n\rightarrow\infty}Au_n=v$, we have
$v=0$.
\begin{lemma}
  \label{lemma3:closable}
  The Malliavin derivative $D\colon\mathcal{S}^q(H)\rightarrow
  L^p(\Omega,L^q([0,T],\LB_2^0))$ is closable for $1<p<\infty$
  and $2\leq q\leq\infty$.
\end{lemma}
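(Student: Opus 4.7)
The plan is to mimic the classical Malliavin closability argument, using integration by parts against a dense family of test functionals in the dual target $L^{p'}(\Omega,L^{q'}([0,T],\LB_2^0))$. The enabling observation is that $q\ge 2$ together with the finiteness of $[0,T]$ yields the continuous and dense inclusion $L^2([0,T],H_0)\subset L^{q'}([0,T],H_0)$, so that $L^2$-integrands (which are precisely those for which the Wiener integral $I(u)$ is well defined) already produce a dense family of test integrands.

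Suppose $(F_n)_{n\in\N}\subset\mathcal{S}^q(H)$ with $F_n\to 0$ in $L^p(\Omega,H)$ and $DF_n\to\eta$ in $L^p(\Omega,L^q([0,T],\LB_2^0))$; the goal is to show $\eta=0$. I would test $\eta$ against elementary tensors $G\otimes u\otimes h$, where $G$ is a classical smooth scalar random variable based on a $\mathcal{C}_{\mathrm{p}}^\infty$ function of finitely many $I(\phi_j)$, $u\in L^2([0,T],H_0)$ is deterministic, and $h\in H$; such a tensor lies in $L^{p'}(\Omega,L^{q'}([0,T],\LB_2^0))$. The Malliavin integration-by-parts identity gives
\begin{align*}
\E\Big[G\int_0^T\langle (D_tF_n)u(t),h\rangle\diff{t}\Big]
=\E\big[\langle F_n,h\rangle GI(u)\big]
-\E\Big[\langle F_n,h\rangle\int_0^T\langle D_tG,u(t)\rangle_{H_0}\diff{t}\Big].
\end{align*}
The right-hand side tends to zero by H\"older's inequality, since $F_n\to 0$ in $L^p(\Omega,H)$ and the remaining factors lie in $L^{p'}(\Omega)$. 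The left-hand side is the dual pairing of $DF_n$ with the test tensor, so it converges to the pairing with $\eta$. Density of these tensors in $L^{p'}(\Omega,L^{q'}([0,T],\LB_2^0))$---using the classical analogue of \eqref{WienerChaos} for the $\Omega$-factor, density of $L^2\subset L^{q'}$ on $[0,T]$ (valid because $q'\le 2$), and density of finite-rank operators in $\LB_2^0$---then forces $\eta=0$.

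The main obstacle is to justify the integration-by-parts identity for $F_n\in\mathcal{S}^q(H)$ whose base function $f$ lies only in $\mathcal{C}_{\mathrm{p}}^1(\R^n,\R)$, since the customary derivation of the formula uses $\mathcal{C}_{\mathrm{p}}^\infty$ regularity of the base function. I would handle this by convolving $f$ with a smooth compactly supported mollifier $\chi_\epsilon$, applying the standard identity to $f*\chi_\epsilon$, and passing to the limit $\epsilon\to 0$; the polynomial growth of $f'$ together with the Gaussian moment bounds on each $I(\phi_j)$ supplies the dominated-convergence input both for the left-hand side (which involves $D_tF_n$, controlled by $f'$) and for the right-hand side. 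For $q=\infty$ the argument is unchanged with $q'=1$, and $L^2\subset L^1$ densely on $[0,T]$ remains valid.
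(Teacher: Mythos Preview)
Your argument is correct, but it takes a considerably more laborious route than the paper's proof. The paper exploits a single observation: since $q\ge 2$ and $[0,T]$ has finite measure, convergence in $L^p(\Omega,L^q([0,T],\LB_2^0))$ implies convergence in $L^p(\Omega,L^2([0,T],\LB_2^0))$. Given $(X_n)\subset\mathcal{S}^q(H)\subset\mathcal{S}^2(H)$ with $X_n\to 0$ in $L^p(\Omega,H)$ and $DX_n\to Z$ in $L^p(\Omega,L^q([0,T],\LB_2^0))$, the embedding yields $DX_n\to Z$ in $L^p(\Omega,L^2([0,T],\LB_2^0))$ as well, and then the already-established closability of the standard Malliavin derivative $D\colon\mathcal{S}^2(H)\to L^p(\Omega,L^2([0,T],\LB_2^0))$ forces $Z=0$. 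That is the entire proof.

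By contrast, you re-derive closability from scratch via the integration-by-parts identity and a density argument in the dual target $L^{p'}(\Omega,L^{q'}([0,T],\LB_2^0))$. This is self-contained and would be the natural approach if no $q=2$ result were available to lean on, but here it duplicates work. Your handling of the $\mathcal{C}_{\mathrm{p}}^1$ regularity by mollification is fine, and the density claim is valid (though the case $q=\infty$ requires a separating-family argument rather than genuine duality, which you gloss over slightly). The paper's reduction to $q=2$ is the cleaner path: it shows that once closability is known in the weakest of the refined topologies, it propagates to all stronger ones for free.
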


\begin{proof}
  We will use the fact that $D\colon \mathcal{S}^2(H)\rightarrow
  L^p(\Omega,L^2([0,T],\LB_2^0))$ is closable for $p>1$,
  \cite{kruse2013}*{Proposition 4.4}. Let $(X_n)_{n\in\N} \subset
  \mathcal{S}^q(H) \subset \mathcal{S}^2(H)$ be a sequence satisfying
  $\lim_{n\rightarrow\infty} X_n=0$ in $L^p(\Omega,H)$ such that
  $\lim_{n\rightarrow\infty} DX_n= Z$ in
  $L^p(\Omega,L^q([0,T],\LB_2^0))$ and hence also in
  $L^p(\Omega,L^2([0,T],\LB_2^0))$. By the closability we have $Z=0$
  in $L^p(\Omega,L^2([0,T],\LB_2^0))$ and hence also in
  $L^p(\Omega,L^q([0,T],\LB_2^0))$.
\end{proof}

For $1<p<\infty$ and $2\leq q\leq\infty$ we can therefore consider the closure
$\mathbf{M}^{1,p,q}(H)$ of $\mathcal{S}^q(H)$ with respect to the norm
\begin{align*}
  \|X\|_{\mathbf{M}^{1,p,q}(H)}=\Big(\|X\|_{L^p(\Omega,H)}^p
  +\|DX\|_{L^p(\Omega,L^q([0,T],\LB_2^0))}^p\Big)^\frac1p.
\end{align*}
Clearly, the spaces $\mathbf{M}^{1,p,2}(H)$, $p>1$, coincide with the classical
Sobolev-Malliavin spaces of the Malliavin calculus, which are usually denoted by
$\mathbf{D}^{1,p}(H)$. The standard Malliavin derivative is uniquely
extended to an operator from $\mathbf{M}^{1,p,2}(H)$ to
$L^p(\Omega,L^2([0,T],\LB_2^0))$. In addition it holds $\mathbf{M}^{1,p,q_1}(H)
\subset \mathbf{M}^{1,p,q_2}(H)$ for all $\infty\ge q_1 \ge q_2 \ge 2$ and from Lemma
\ref{lemma3:closable} it follows that the restriction of the standard Malliavin
derivative $D|_{\mathbf{M}^{1,p,q}(H)}$ is a well-defined operator from
$\mathbf{M}^{1,p,q}(H)$ to $L^p(\Omega,L^q([0,T],\LB_2^0))$.
If $p = q$, we abbreviate $\mathbf{M}^{1,p}(H)=\mathbf{M}^{1,p,p}(H)$.

The space $\mathbf{M}^{1,2}(H)$ is a Hilbert space
and it has a well developed theory of Malliavin calculus. The adjoint of the
Malliavin derivative $D\colon \mathbf{M}^{1,2}(H) \subset
L^2(\Omega,H)\rightarrow
L^2([0,T]\times\Omega,\LB_2^0)$ is called the \emph{divergence} operator or the
\emph{Skorohod} integral and is denoted by $\delta\colon
L^2([0,T]\times\Omega,\LB_2^0)\rightarrow L^2(\Omega,H)$ with domain
$\D(\delta)$. The duality reads
\begin{align}
  \label{eq2:IBP}
  \big\langle X,\delta\Phi\big\rangle_{L^2(\Omega,H)}= \big\langle
  DX,\Phi\big\rangle_{L^2([0,T]\times\Omega,\LB_2^0)}, \quad
  X \in \mathbf{M}^{1,2}(H),\ \Phi\in\D(\delta).
\end{align}
We refer to this as the Malliavin integration by parts formula. It is well
known that for predictable $\Phi\in\D(\delta)$ the action of $\delta$ coincides
with that of the $H$-valued It\={o} integral, i.e.,
$\delta\Phi=\int_0^T\Phi(t)\diff{W(t)}$, \cite{kruse2013}*{Proposition 4.12}.

In the remainder of this subsection we state a modification of the chain rule
from \cite{kruse2013}*{Lemma 4.7} and a product rule for the Malliavin
derivative.

\begin{lemma}
  \label{lemma3:chainrule}
  Let $U, V$ be two separable Hilbert spaces and let $\gamma
    \in\Cp^1(U,V)$, be such that there
  exist constants $C$ and $r \ge 0$ with
  \begin{align*}
    \| \gamma(u) \|_V \le C \big( 1 + \| u \|^{1+r}_U \big), \quad \| \gamma'(u)
    \|_{\LB(U,V)} \le  C \big( 1 + \| u \|^{r}_U \big),
  \end{align*}
  for all $u \in U$. Then, for $1<p<\infty$, $2\leq q \leq\infty$ and $X
  \in \mathbf{M}^{1,(1+r)p,q}(U)$, it follows that $\gamma(X) \in
  \mathbf{M}^{1,p,q}(V)$ with $\|  \gamma(X) \|_{\mathbf{M}^{1,p,q}(V)}
  \lesssim ( 1 + \| X \|_{\mathbf{M}^{1,(1+r)p,q}(U)}^{1+r} )$ and
  \begin{align}
    \label{Chain1}
    D_t(\gamma (X))&=\gamma'(X)\cdot D_t X,\quad t \in [0,T].
  \end{align}
\end{lemma}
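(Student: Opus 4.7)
The plan is to proceed in two stages: first verify the identity \eqref{Chain1} and the norm bound for $X$ in the smooth core $\mathcal{S}^q(U)$, then extend to general $X \in \mathbf{M}^{1,(1+r)p,q}(U)$ by density together with the closability of $D$ (Lemma~\ref{lemma3:closable}).

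For $X = \sum_{j=1}^n h_j F_j \in \mathcal{S}^q(U)$ with $F_j = f_j(I(i(\phi_1)), \dots, I(i(\phi_m)))$ and $f_j \in \Cp^1(\R^m, \R)$, I would set $G(z) := \gamma\big(\sum_j h_j f_j(z)\big) \in \Cp^1(\R^m, V)$, so that $\gamma(X) = G(I(i(\phi_1)), \dots, I(i(\phi_m)))$, and the classical chain rule on $\R^m$ gives
\begin{align*}
\partial_k G(z) = \gamma'\Big(\sum_j h_j f_j(z)\Big) \cdot \sum_j h_j \partial_k f_j(z).
\end{align*}
Unwrapping the definition of the Malliavin derivative on $V$-valued cylindrical elements then yields \eqref{Chain1} on $\mathcal{S}^q(U)$. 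For the norm bound I would combine the pointwise estimate $\|\gamma'(X) \cdot D_t X\|_{\LB_2(H_0, V)} \le \|\gamma'(X)\|_{\LB(U, V)} \|D_t X\|_{\LB_2(H_0, U)}$ with the growth hypotheses on $\gamma$ and $\gamma'$ and an application of Hölder's inequality in $\Omega$ with exponents $1+r$ and $(1+r)/r$ (the case $r=0$ being trivial) to obtain the claimed bound in terms of $\|X\|_{\mathbf{M}^{1,(1+r)p,q}(U)}^{1+r}$.

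For the extension, I would pick a sequence $(X_n) \subset \mathcal{S}^q(U)$ with $X_n \to X$ in $\mathbf{M}^{1,(1+r)p,q}(U)$ and pass to a subsequence along which $X_n \to X$ in $U$ almost surely and $DX_n \to DX$ in $\LB_2(H_0, U)$ almost everywhere on $\Omega \times [0,T]$. Continuity of $\gamma$ and $\gamma'$ then yields pointwise convergence $\gamma(X_n) \to \gamma(X)$ and $\gamma'(X_n) \cdot D_t X_n \to \gamma'(X) \cdot D_t X$. Writing $\gamma'(X_n) \cdot DX_n - \gamma'(X) \cdot DX = (\gamma'(X_n) - \gamma'(X)) \cdot DX_n + \gamma'(X) \cdot (DX_n - DX)$, the second term is controlled by Hölder exactly as in the smooth case, and the first by Vitali's theorem, using the uniform $L^{p(1+r)}(\Omega, L^q([0,T], \LB_2^0))$-boundedness of $DX_n$ together with the polynomial growth of $\gamma'$ to supply the required equi-integrability. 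A similar Vitali argument gives $\gamma(X_n) \to \gamma(X)$ in $L^p(\Omega, V)$; closability of $D$ then identifies $D(\gamma(X)) = \gamma'(X) \cdot DX$, and the norm bound passes through the limit by Fatou.

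The main technical obstacle is exactly this passage to the $L^p$-limit in the derivative term: because $\gamma$ and $\gamma'$ are only polynomially bounded of orders $1+r$ and $r$, a naive dominated-convergence argument with only $L^p$-control on $X$ fails. This is precisely why the hypothesis is formulated as $X \in \mathbf{M}^{1,(1+r)p,q}(U)$ rather than $\mathbf{M}^{1,p,q}(U)$: the stronger $L^{p(1+r)}$-integrability of $X$ and $DX$ is what supplies the equi-integrability needed to upgrade almost-everywhere convergence to convergence in $L^p(\Omega, L^q([0,T], \LB_2(H_0, V)))$.
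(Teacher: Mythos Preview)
Your approach is sound, but the paper takes a much shorter route. Rather than redoing a density-and-closability argument from scratch, the paper observes that the case $q=2$ is exactly the classical chain rule, which it imports from \cite{kruse2013}*{Lemma~4.7}. Since $\mathbf{M}^{1,(1+r)p,q}(U) \subset \mathbf{M}^{1,(1+r)p,2}(U)$, the identity $D\gamma(X) = \gamma'(X)\cdot DX$ and the membership $\gamma(X) \in \mathbf{M}^{1,p,2}(V)$ come for free from that citation. For $q>2$ the only remaining task is to check that $\|\gamma(X)\|_{\mathbf{M}^{1,p,q}(V)} < \infty$, and this is precisely the H\"older estimate you sketch, with exponents $(1+r)$ and $(1+r)/r$ applied to $\|\gamma'(X)\|_{\LB(U,V)}$ and $\|DX\|_{L^q([0,T],\LB_2(H_0,U))}$. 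So your self-contained argument effectively re-derives the content of the cited lemma in the refined setting; this buys independence from the external reference at the cost of carrying the full Vitali/closability apparatus.

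One minor technical wrinkle in your Step~1: for $X \in \mathcal{S}^q(U)$, the composite $\gamma(X) = G(I(i(\phi_1)),\dots,I(i(\phi_m)))$ with $G \in \Cp^1(\R^m,V)$ is not literally of the finite-tensor form $\sum_j v_j \otimes F_j$ that the paper uses to define $\mathcal{S}^q(V)$, so ``unwrapping the definition of the Malliavin derivative on $V$-valued cylindrical elements'' needs a word of justification --- either an orthonormal-basis truncation of $G$ to land in $\mathcal{S}^q(V)$, or simply an appeal to the $q=2$ chain rule already at that stage, which is of course exactly what the paper does globally.
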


\begin{proof}
  Let $p > 1$ be arbitrary. For $q=2$ the result follows directly from
  \cite{kruse2013}*{Lemma 4.7}. In fact \cite{kruse2013}*{Lemma 4.7} is stated for Fr\'echet differentiable $\gamma$ but checking the proof line by line shows that $\gamma\in\Cp^1(U,V)$ is sufficient.
  From this, it suffices to show that $\| \gamma(X) \|_{\mathbf{M}^{1,p,q}(V)}
  < \infty$ if $X \in \mathbf{M}^{1,(1+r)p,q}(U)$ for $q > 2$. Indeed, from the
  polynomial growth condition it follows that
  \begin{align*}
    \big\| \gamma(X) \big\|_{L^p(\Omega,V)} \le C \big( 1 + \| X
    \|_{L^{(1+r)p}(\Omega,U)}^{1+r} \big) \le C \big( 1 + \| X
    \|_{\mathbf{M}^{1,(1+r)p,q}(U)}^{1+r} \big).
  \end{align*}
  Moreover, it holds
  \begin{align*}
    &\big\| D \gamma(X) \big\|_{L^p(\Omega, L^q( [0,T],\LB_2(H_0,V)))}\\
    &\quad = \big( \E \big[ \big\| \gamma'(X)\cdot D X \big\|_{L^q(
    [0,T],\LB_2(H_0,V))}^p \big] \big)^{\frac{1}{p}}\\
    &\quad \lesssim \big( \E \big[ \big( 1 +  \big\|  X \big\|^r_U \big)^p \|
    DX \|_{L^q( [0,T],\LB_2(H_0,U))}^p \big] \big)^{\frac{1}{p}}\\
    &\quad \le \big( 1 + \| X \|^{r}_{L^{(1+r)p}(\Omega,U)} \big) \big\| DX
    \big\|_{L^{(1+r)p}(\Omega,L^q([0,T],\LB_2(H_0,U)))}\\
    &\quad \lesssim \big( 1 + \| X \|^{1+r}_{\mathbf{M}^{1,(1+r)p,q}(U)} \big),
  \end{align*}
  where we applied the polynomial growth condition on $\gamma'$ and
  H\"older's inequality with exponents $(r+1)/r$ and $r+1$.  This
  completes the proof.
\end{proof}

\begin{lemma}
  \label{lemma3:prodrule}
  Let $U_1,U_2,V$ be separable Hilbert spaces and $1 < p <\infty$, $2 \leq
  q\leq\infty$. For $\sigma \in \Cb^0(U_1,\LB(U_2,V)) \cap
  \Cb^1(U_1,\LB(U_2,V))$ and
  $X \in \mathbf{M}^{1,2p,q}(U_1)$ and $Y \in
  \mathbf{M}^{1,2p,q}(U_2)$ it holds $\sigma(X) \cdot Y \in \mathbf{M}^{1,p,q}(V)$. 
  In addition, we have
  \begin{align}
    \label{Product}
    D_t (\sigma(X) \cdot Y)&= \sigma'(X)\cdot (D_t X, Y) + \sigma(X) \cdot
    D_t Y, \quad t \in [0,T].
  \end{align}
\end{lemma}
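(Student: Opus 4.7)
The plan is to reduce the product rule to the chain rule of Lemma~\ref{lemma3:chainrule} by viewing the bilinear-in-the-second-variable map $(x,y) \mapsto \sigma(x)\cdot y$ as a single nonlinear function on the product Hilbert space $U_1 \times U_2$. Concretely I introduce $\Gamma\colon U_1 \times U_2 \to V$ by $\Gamma(x,y) := \sigma(x)\cdot y$, equipping $U_1 \times U_2$ with its canonical Hilbert structure $\|(x,y)\|^2 = \|x\|_{U_1}^2 + \|y\|_{U_2}^2$, and apply Lemma~\ref{lemma3:chainrule} to $\Gamma$ evaluated at $(X,Y)$.

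The first step is to verify the hypotheses of Lemma~\ref{lemma3:chainrule} on $\Gamma$. Since $\sigma$ is G\^ateaux differentiable and $\Gamma$ is linear in its second variable, a direct computation gives
\begin{equation*}
  \Gamma'(x,y) \cdot (u_1, u_2) = \sigma'(x)(u_1) \cdot y + \sigma(x) \cdot u_2,
\end{equation*}
and $\sigma \in \Cb^0 \cap \Cb^1$ yields
\begin{equation*}
  \|\Gamma(x,y)\|_V \le \|\sigma\|_{\Cb^0} \|y\|_{U_2}, \qquad
  \|\Gamma'(x,y)\|_{\LB(U_1 \times U_2, V)} \le |\sigma|_{\Cb^1} \|y\|_{U_2} + \|\sigma\|_{\Cb^0}.
\end{equation*}
These right-hand sides are bounded respectively by $C(1 + \|(x,y)\|_{U_1 \times U_2}^{2})$ and $C(1 + \|(x,y)\|_{U_1 \times U_2})$, so $\Gamma \in \Cp^1(U_1 \times U_2, V)$ satisfies the polynomial growth hypothesis with exponent $r = 1$, which pinpoints the integrability index $(1+r)p = 2p$ appearing in the statement.

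The second step is the identification of $(X,Y)$ with an element of $\mathbf{M}^{1, 2p, q}(U_1 \times U_2)$ with Malliavin derivative $D_t(X,Y) = (D_t X, D_t Y)$, read through the canonical isomorphism $\LB_2(H_0, U_1 \times U_2) \cong \LB_2(H_0, U_1) \oplus \LB_2(H_0, U_2)$. This is immediate for cylindrical pairs from $\mathcal{S}^q(U_1) \times \mathcal{S}^q(U_2)$ using \eqref{eq:SqH} and extends to the closure via Lemma~\ref{lemma3:closable}. Lemma~\ref{lemma3:chainrule} then yields $\sigma(X) \cdot Y = \Gamma(X,Y) \in \mathbf{M}^{1,p,q}(V)$ together with
\begin{equation*}
  D_t(\sigma(X) \cdot Y) = \Gamma'(X,Y) \cdot (D_t X, D_t Y) = \sigma'(X) \cdot (D_t X, Y) + \sigma(X) \cdot D_t Y,
\end{equation*}
which is exactly \eqref{Product}. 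The only point I expect to require any real care is the identification of the Malliavin space over the product $U_1 \times U_2$ in the second step; once this bookkeeping is in place, everything else is a direct specialization of the chain rule.
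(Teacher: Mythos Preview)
Your proof is correct and follows essentially the same approach as the paper: both define the map $\Gamma(x,y)=\sigma(x)\cdot y$ on the product space $U_1\times U_2$, verify the polynomial growth hypotheses of Lemma~\ref{lemma3:chainrule} with $r=1$, and then read off the product rule from the chain rule. You are slightly more explicit about the identification of $(X,Y)$ as an element of $\mathbf{M}^{1,2p,q}(U_1\times U_2)$, which the paper leaves implicit, but otherwise the arguments are identical.
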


\begin{proof}
  The proof is done by an application of the chain rule. For this define the
  mapping $\gamma \colon U_1 \times U_2 \to V$ given by $\gamma(x,y)= \sigma(x)
  \cdot y$. Certainly, it holds $\gamma \in \Cp^1( U_1 \times U_2, V)$
  and we have $\| \gamma(x,y)\|_V = \| \sigma(x) \cdot y \|_V \le | \sigma
  |_{\Cb^0(U_1,\LB(U_2,V))} \| y \|_{U_2}$ for all $(x,y) \in U_1 \times U_2$.
  Further, it holds
  \begin{align*}
    \gamma'(x,y) \cdot (z_1,z_2) = \sigma'(x) \cdot( z_1, y) + \sigma(x)
    \cdot z_2,
  \end{align*}
  for all $(x,y),(z_1, z_2) \in U_1 \times U_2$. Therefore,
  \begin{align*}
    \begin{split}
      \big\| \gamma'(x,y) \cdot (z_1,z_2) \big\|_V
      &\le | \sigma |_{\Cb^1(U_1,\LB(U_2,V))} \| z_1 \|_{U_1} \|y \|_{U_2} + |
      \sigma |_{\Cb^0(U_1,\LB(U_2,V))} \| z_2\|_{U_2}\\
      &\le \max\{| \sigma |_{\Cb^0(U_1,\LB(U_2,V))}, | \sigma
      |_{\Cb^1(U_1,\LB(U_2,V))} \} \\
      &\qquad \times \big( 1 +
      \|y\|_{U_2} \big) \big( \|z_1 \|_{U_1} + \| z_2\|_{U_2} \big).
    \end{split}
  \end{align*}
  Hence, $\gamma$ satisfies the assumption of
  Lemma~\ref{lemma3:chainrule} with $r = 1$. Thus, the result follows
  from an application of Lemma~\ref{lemma3:chainrule}.
\end{proof}

\subsection{Duality}
\label{subsec3:2}
For any $2\leq p<\infty$, $2\leq q\leq\infty$ the inclusion
$\mathbf{M}^{1,p,q}(H)\subset L^2(\Omega,H)$ is dense and continuous and hence
the spaces
\begin{align*}
  \mathbf{M}^{1,p,q}(H)\subset L^2(\Omega,H)\subset \mathbf{M}^{1,p,q}(H)^*,
\end{align*}
define a Gelfand triple, where we identify $L^2(\Omega,H)\cong L^2(\Omega,H)^*$
by the Riesz Representation Theorem. We denote the dual pairing of
$\mathbf{M}^{1,p,q}(H)^*$ and $\mathbf{M}^{1,p,q}(H)$ by $[Z,{Y}]$ for
$Z\in\mathbf{M}^{1,p,q}(H)^*$, ${Y}\in\mathbf{M}^{1,p,q}(H)$. The inclusion
$L^2(\Omega,H)\subset\mathbf{M}^{1,p,q}(H)^*$ is realized through the definition
$[Z,{Y}]=\langle Z,{Y}\rangle_{L^2(\Omega,H)}$ for all $Z\in L^2(\Omega,H)$,
${Y}\in\mathbf{M}^{1,p,q}(H)$, with the norm
\begin{align}
  \label{eq2:dual}
  \|Z\|_{\mathbf{M}^{1,p,q}(H)^*} =\sup_{Y\in
  \mathbf{M}^{1,p,q}(H)}\frac{\langle{Y},
  Z\rangle_{L^2(\Omega,H)}}{\|Y\|_{\mathbf{M}^{1,p,q}(H)}},\quad Z\in
  L^2(\Omega,H).
\end{align}

The Burkholder type inequality in Lemma \ref{lemma2:Burkholder} gives
an estimate of the norm of a stochastic integral that is $L^2$ in
time. We will now prove a similar inequality with respect to the
$\mathbf{M}^{1,p,q}(H)^*$-norm, which is $L^{q'}$ in time, where $q'$
is the conjugate exponent to $q$ given by $\frac{1}{q} + \frac{1}{q'}
= 1$ if $q<\infty$ and $q'=1$ otherwise. Since $q\in[2,\infty]$, and
hence $q'\in[1,2]$, this admits worse singularities than in
Lemma~\ref{lemma2:Burkholder}.
\begin{theorem}
  \label{lemma2:dual3}
  Let $p\in[2,\infty)$, $q\in[2,\infty]$ and $p'$, $q'$ denote the
  conjugate exponents.  If $\Phi\in L^2([0,T]\times\Omega,\LB_2^0)$ is
  predictable, then
  \begin{align*}
    \Big\|\int_0^T\Phi(t)\diff{W(t)}\Big\|_{\mathbf{M}^{1,p,q}(H)^*}\leq
    \|\Phi\|_{L^{p'}(\Omega,L^{q'}([0,T],\LB_2^0))}.
  \end{align*}
\end{theorem}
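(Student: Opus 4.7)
The plan is pure duality. If the right-hand side is infinite the inequality is trivial, so assume $\Phi \in L^{p'}(\Omega,L^{q'}([0,T],\LB_2^0))$. By the definition \eqref{eq2:dual} of the dual norm it suffices to show that for every $Y\in \mathbf{M}^{1,p,q}(H)$ one has
\[
  \Big|\big\langle Y,\textstyle\int_0^T\Phi(t)\diff{W(t)}\big\rangle_{L^2(\Omega,H)}\Big|
  \le \|Y\|_{\mathbf{M}^{1,p,q}(H)}\,\|\Phi\|_{L^{p'}(\Omega,L^{q'}([0,T],\LB_2^0))}.
\]

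The first step is to reduce to the Malliavin integration by parts formula \eqref{eq2:IBP}. Since $p,q\ge 2$, H\"older's inequality gives the continuous inclusion $\mathbf{M}^{1,p,q}(H)\subset \mathbf{M}^{1,2}(H)$, so an arbitrary test element $Y$ lies in $\mathbf{M}^{1,2}(H)$. Since $\Phi$ is predictable and in $L^2([0,T]\times\Omega,\LB_2^0)$, it belongs to $\D(\delta)$ and $\delta \Phi = \int_0^T \Phi(t)\diff{W(t)}$. Applying \eqref{eq2:IBP} we rewrite
\[
  \big\langle Y, \textstyle\int_0^T\Phi(t)\diff{W(t)}\big\rangle_{L^2(\Omega,H)}
  = \E\Big[\int_0^T \langle D_t Y,\Phi(t)\rangle_{\LB_2^0}\diff{t}\Big].
\]

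The second step is two successive H\"older estimates. Applying H\"older's inequality in $t$ with exponents $(q,q')$ and then in $\omega$ with exponents $(p,p')$, the right-hand side is bounded by
\[
  \|DY\|_{L^p(\Omega,L^q([0,T],\LB_2^0))}\,\|\Phi\|_{L^{p'}(\Omega,L^{q'}([0,T],\LB_2^0))}
  \le \|Y\|_{\mathbf{M}^{1,p,q}(H)}\,\|\Phi\|_{L^{p'}(\Omega,L^{q'}([0,T],\LB_2^0))},
\]
where the final estimate is just the definition of the $\mathbf{M}^{1,p,q}(H)$-norm. Dividing by $\|Y\|_{\mathbf{M}^{1,p,q}(H)}$ and taking the supremum over $Y$ yields the claim.

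The only non-routine point I anticipate is the boundary case $q=\infty$: there one has $q'=1$, and the H\"older step in time becomes $\int_0^T \langle D_tY,\Phi(t)\rangle \diff t \le \|DY\|_{L^\infty([0,T],\LB_2^0)}\|\Phi\|_{L^1([0,T],\LB_2^0)}$, which is still valid pathwise, and the subsequent H\"older step in $\Omega$ and the identification of the resulting factor with $\|Y\|_{\mathbf{M}^{1,p,\infty}(H)}$ go through unchanged. Everything else is essentially a bookkeeping of conjugate exponents combined with the Malliavin--Skorohod duality.
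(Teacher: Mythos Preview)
Your argument is correct and coincides with the paper's own proof: identify the stochastic integral with $\delta\Phi$, apply the Malliavin integration by parts formula \eqref{eq2:IBP}, and then use H\"older's inequality in time and in probability. The only difference is that you spell out the inclusion $\mathbf{M}^{1,p,q}(H)\subset\mathbf{M}^{1,2}(H)$ and the case $q=\infty$, which the paper leaves implicit.
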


\begin{proof}
  We use the fact that the stochastic integral of $\Phi$ equals $\delta\Phi$.
  By \eqref{eq2:dual}, \eqref{eq2:IBP}, and H\"{o}lder's inequality, we get
  \begin{align*}
    \big\|\delta\Phi\big\|_{\mathbf{M}^{1,p,q}(H)^*}
    &= \sup_{Y\in \mathbf{M}^{1,p,q}(H)}\frac{\big\langle Y,
    \delta\Phi\big\rangle_{L^2(\Omega,H)}}{ \|Y\|_{\mathbf{M}^{1,p,q}(H)}} =
    \sup_{Y\in \mathbf{M}^{1,p,q}(H)}\frac{\big\langle DY, \Phi \big
    \rangle_{L^2([0,T]\times\Omega,\LB_2^0)}}{ \|Y\|_{\mathbf{M}^{1,p,q}(H)}}\\
    &\leq\sup_{Y\in \mathbf{M}^{1,p,q}(H)}
    \frac{\|DY\|_{L^p(\Omega,L^q([0,T],\LB_2^0))}
    \big\|\Phi\big\|_{L^{p'}(\Omega,L^{q'}([0,T],\LB_2^0))}}
    {\|Y\|_{\mathbf{M}^{1,p,q}(H)}}\\
    &\leq\|\Phi\|_{L^{p'}(\Omega,L^{q'}([0,T],\LB_2^0))},
  \end{align*}
  which finishes the proof.
\end{proof}

\begin{remark} Since the inequality in Lemma~\ref{lemma2:Burkholder}
  is actually double-sided, one may ask whether this is true also for
  Theorem~\ref{lemma2:dual3}.  In fact we can prove the reverse
  inequality for deterministic $\Phi\in L^2([0,T],\LB_2^0)$. Since
  $\mathcal{H}_1^{q}(H):=\{\delta\Psi : \Psi\in
  L^q([0,T],\LB_2^0)\}\subset\mathbf{M}^{1,p,q}(H)$ we get an
  inequality in \eqref{eq2:dual} by taking the supremum over
  $\mathcal{H}_1^{q}(H)$ instead of $\mathbf{M}^{1,p,q}(H)$:
  \begin{align*}
    \big\|\delta\Phi\big\|_{\mathbf{M}^{1,p,q}(H)^*}
    &=\sup_{Y\in\mathbf{M}^{1,p,q}(H)}\frac{\big\langle Y, \delta\Phi
    \big \rangle_{L^2([0,T]\times\Omega,\LB_2^0)}}{
    \|Y\|_{\mathbf{M}^{1,p,q}(H)}}\\
    &\geq \sup_{Y\in\mathcal{H}_1^{q}(H)}\frac{\big\langle DY, \Phi
    \big \rangle_{L^2([0,T]\times\Omega,\LB_2^0)}}{
    \|Y\|_{\mathbf{M}^{1,p,q}(H)}}\\
    &=\sup_{\Psi\in L^q([0,T],\LB_2^0)}\frac{\big\langle D\delta\Psi, \Phi \big
    \rangle_{L^2([0,T]\times\Omega,\LB_2^0)}}
    {\Big(\|\delta\Psi\|_{L^p(\Omega,H)}^p
    +\|D\delta\Psi\|_{L^p(\Omega,L^q([0,T],\LB_2^0))}^p\Big)^\frac1p}.
  \end{align*}
  We next use the fact that $D\delta\Psi=\Psi+\delta D\Psi=\Psi$ for
  deterministic $\Psi\in L^q([0,T],\LB_2^0)$. By Burkholder's
  inequality Lemma~\ref{lemma2:Burkholder} and H\"{o}lder's inequality
  we get
  \begin{align*}
    \big\|\delta\Phi\big\|_{\mathbf{M}^{1,p,q}(H)^*}
    &\geq\sup_{\Psi\in L^q([0,T],\LB_2^0)}\frac{\big\langle \Psi, \Phi \big
    \rangle_{L^2([0,T],\LB_2^0)}} {\Big(C_p^p\|\Psi\|_{L^2([0,T],\LB_2^0)}^p
    +\|\Psi\|_{L^q([0,T],\LB_2^0)}^p\Big)^\frac1p}\\
    &\geq\frac1{\big(C_p^p T^\frac{q}{q-2}+1\big)^\frac1p}\sup_{\Psi\in
    L^q([0,T],\LB_2^0)}\frac{\big\langle \Psi, \Phi \big
    \rangle_{L^2([0,T],\LB_2^0)}} {\|\Psi\|_{L^q([0,T],\LB_2^0)}}\\
    &=\frac1{\big(C_p^p
    T^\frac{q}{q-2}+1\big)^\frac1p}\|\Phi\|_{L^{q'}([0,T],\LB_2^0)}.
\end{align*}
The proof relies on the fact that $D\Psi=0$. For random $\Phi$ one
needs random $\Psi\in L^p(\Omega,L^q([0,T],\LB_2^0))$ and, since
$\delta D\Psi\neq0$ in this case, this proof does not work.
\end{remark}

\begin{remark}
  One consequence of Theorem~\ref{lemma2:dual3} is that the stochastic
  integral can be extended in $\mathbf{M}^{1,p,q}(H)^*$ to integrands
  in $L^{p'}(\Omega,L^{q'}([0,T],\LB_2^0))$. The elements of
  $\mathbf{M}^{1,p,q}(H)^*$ are distributions defined by their action
  on random variables in $\mathbf{M}^{1,p,q}(H)$. One can show that
  the solution of the linear stochastic heat equation driven by
  space-time white noise in two space dimensions is a stochastic
  process $X\in \Cc([0,T],\mathbf{M}^{1,p,q}(H)^*)$ for every $p\geq2$
  and $q>2$. In three space dimensions the same is valid for every
  $p\geq2$ and $q>4$. In higher space dimensions than three the
  solution is not $\mathbf{M}^{1,p,q}(H)^*$-valued since this would
  force $q'<1$. H\"{older} continuity in time in the
  $\mathbf{M}^{1,p,q}(H)^*$-norms can be shown for the solution in two
  and three space dimensions for the $p,q$ for which the solution is
  defined. See Lemma~\ref{lemma5:Lipschitz} below for the regular
  case.  Solutions defined in a distributional sense with respect to
  $\Omega$ is not a new concept. This is the heart of the white noise
  approach to SPDE, see, e.g., \cite{benth1997}, \cite{oksendal2009}.
\end{remark}

Theorem~\ref{lemma2:dual3} is a key result in the present work. But to
be able to perform error estimates for semilinear equations we also
need an intermediate space between $\mathbf{M}^{1,p,p}(H)$ and
$\mathbf{M}^{1,2p,p}(H)$. For $2\leq p <\infty$ we define
\begin{align*}
  \mathbf{G}^{1,p}(H)=\mathbf{M}^{1,p,p}(H)\cap L^{2p}(\Omega,H).
\end{align*}
It is a Banach space equipped with the norm
\begin{align*}
  \|Y\|_{\mathbf{G}^{1,p}(H)}=\max\Big(\|Y\|_{\mathbf{M}^{1,p,p}(H)},
  \|Y\|_{L^{2p}(\Omega,H)}\Big).
\end{align*}
We have $\mathbf{M}^{1,2p,p}(H) \subset \mathbf{G}^{1,p}(H) \subset
\mathbf{M}^{1,p,p}(H)$ and we obtain a new Gelfand triple
\begin{align*}
  \mathbf{G}^{1,p}(H)\subset L^2(\Omega,H)\subset \mathbf{G}^{1,p}(H)^*.
\end{align*}
The next lemma is a slightly modified version of
Lemma~\ref{lemma3:prodrule}, which is necessary to prove the local
Lipschitz bound in Lemma~\ref{lemma5:Lipschitz}.

\begin{lemma}
  \label{lemma3:prodrule2}
  Let $U_1, U_2, V$ be separable Hilbert spaces. For $\sigma \in
  \Cb^0(U_1, \LB(U_2,V)) \cap \Cb^1(U_1, \LB(U_2,V))$, $X \in
  \mathbf{M}^{1,2p,p}(U_1)$, $Y \in \mathbf{G}^{1,p}(U_2)$, $2 < p
  <\infty$, it holds $\sigma(X) \cdot Y \in \mathbf{G}^{1,p}(V)$. In
  addition, we have
  \begin{align*}
    &\| \sigma(X) \cdot Y \|_{\mathbf{G}^{1,p}(V)}\\
    &\quad \le \max \big(
    |\sigma|_{\Cb^0(U_1,\LB(U_2,V))}, |\sigma|_{\Cb^1(U_1,\LB(U_2,V))} \big)
    \Big(1 +  \| X \|_{\mathbf{M}^{1,2p,p}(U_1)} \Big) \| Y
      \|_{\mathbf{G}^{1,p}(U_2)}.
  \end{align*}
\end{lemma}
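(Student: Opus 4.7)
The $\mathbf{G}^{1,p}(V)$-norm is a maximum of the $\mathbf{M}^{1,p,p}(V)$-norm and the $L^{2p}(\Omega,V)$-norm, so I would bound each separately. The $L^{2p}$-part is immediate: the pointwise estimate $\|\sigma(X)\cdot Y\|_V \le |\sigma|_{\Cb^0(U_1,\LB(U_2,V))}\|Y\|_{U_2}$ followed by taking $L^{2p}(\Omega)$-norms yields $\|\sigma(X)\cdot Y\|_{L^{2p}(\Omega,V)} \le |\sigma|_{\Cb^0}\|Y\|_{L^{2p}(\Omega,U_2)} \le |\sigma|_{\Cb^0}\|Y\|_{\mathbf{G}^{1,p}(U_2)}$.

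For the $\mathbf{M}^{1,p,p}(V)$-part, the strategy is to establish the product rule
\begin{align*}
  D_t(\sigma(X)\cdot Y) = \sigma'(X)\cdot(D_tX, Y) + \sigma(X)\cdot D_tY, \quad t\in[0,T],
\end{align*}
and then estimate each term. The obstacle is that Lemma~\ref{lemma3:prodrule} requires $Y \in \mathbf{M}^{1,2p,p}(U_2)$, whereas we only assume $Y \in \mathbf{G}^{1,p}(U_2)$. To bridge the gap I would approximate $Y$ by a sequence $(Y_n)_{n\in\N} \subset \mathbf{M}^{1,2p,p}(U_2)$ converging to $Y$ simultaneously in $L^{2p}(\Omega,U_2)$ and in $\mathbf{M}^{1,p,p}(U_2)$. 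For each such $Y_n$, Lemma~\ref{lemma3:prodrule} applies with $q=p$ to give $\sigma(X)\cdot Y_n \in \mathbf{M}^{1,p,p}(V)$ together with the product rule formula. One then passes to the limit: using boundedness of $\sigma$ and $\sigma'$ together with H\"older's inequality with exponents $2,2$, the first term $\sigma'(X)\cdot(DX,Y_n)$ converges in $L^p(\Omega,L^p([0,T],\LB_2(H_0,V)))$ since $Y_n \to Y$ in $L^{2p}$ and $DX \in L^{2p}(\Omega,L^p([0,T],\LB_2(H_0,U_1)))$, and the second term $\sigma(X)\cdot DY_n$ converges there since $DY_n \to DY$ in $L^p(\Omega,L^p([0,T],\LB_2(H_0,U_2)))$. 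Since also $\sigma(X)\cdot Y_n \to \sigma(X)\cdot Y$ in $L^p(\Omega,V)$, closability of the Malliavin derivative (Lemma~\ref{lemma3:closable}) places $\sigma(X)\cdot Y$ in $\mathbf{M}^{1,p,p}(V)$ and identifies its derivative with the formula above.

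Once the product rule is in hand, the norm estimate is routine: the $L^p(\Omega,V)$-piece is bounded by $|\sigma|_{\Cb^0}\|Y\|_{L^p(\Omega,U_2)}$; the first derivative term by $|\sigma|_{\Cb^1} \|DX\|_{L^{2p}(\Omega,L^p)}\|Y\|_{L^{2p}(\Omega,U_2)} \le |\sigma|_{\Cb^1}\|X\|_{\mathbf{M}^{1,2p,p}(U_1)}\|Y\|_{\mathbf{G}^{1,p}(U_2)}$ by H\"older with exponents $2,2$; and the second derivative term by $|\sigma|_{\Cb^0}\|DY\|_{L^p(\Omega,L^p)} \le |\sigma|_{\Cb^0}\|Y\|_{\mathbf{G}^{1,p}(U_2)}$. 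Combining with the $L^{2p}$-bound and factoring out $\max(|\sigma|_{\Cb^0},|\sigma|_{\Cb^1})$ yields the stated inequality. The main obstacle is the construction of the approximating sequence $(Y_n)$: since $\mathbf{G}^{1,p}(U_2)$ is defined as an intersection rather than a closure, density of $\mathbf{M}^{1,2p,p}(U_2)$ in $\mathbf{G}^{1,p}(U_2)$ is not automatic and must be obtained by a simultaneous smoothing, for instance via the vector-valued Ornstein-Uhlenbeck (Mehler) semigroup $Y_n = T_{1/n}Y$, whose contractive and hypercontractive properties produce $L^{2p}$-integrable Malliavin derivatives while preserving convergence in both relevant norms.
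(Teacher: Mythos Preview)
Your approach is correct but more involved than the paper's. The paper sidesteps the approximation of $Y$ entirely by applying Lemma~\ref{lemma3:prodrule} not with the exponent pair $(p,p)$ but with $(\tfrac{p}{2},p)$: since $p>2$ ensures $\tfrac{p}{2}>1$, and since $X\in\mathbf{M}^{1,2p,p}(U_1)\subset\mathbf{M}^{1,p,p}(U_1)$ and $Y\in\mathbf{G}^{1,p}(U_2)\subset\mathbf{M}^{1,p,p}(U_2)$, the hypotheses of Lemma~\ref{lemma3:prodrule} with $p$ replaced by $\tfrac{p}{2}$ are met, yielding $\sigma(X)\cdot Y\in\mathbf{M}^{1,p/2,p}(V)$ together with the product rule \eqref{Product}. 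With the formula for $D(\sigma(X)\cdot Y)$ now available, the paper bounds $\|D(\sigma(X)\cdot Y)\|_{L^p(\Omega,L^p([0,T],\LB_2(H_0,V)))}$ exactly as you do (H\"older with exponents $2,2$ on the $\sigma'$-term, boundedness of $\sigma$ on the second term), and combines this with the $L^{2p}$ bound to conclude membership in $\mathbf{G}^{1,p}(V)$.

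What you gain by your route is explicitness about the closure issue: you construct an approximating sequence in $\mathcal{S}^p$ (via the Ornstein--Uhlenbeck semigroup) that converges simultaneously in $L^{2p}$ and in $\mathbf{M}^{1,p,p}$, so the upgrade from weaker to stronger Sobolev--Malliavin membership is made transparent. The paper's argument, by contrast, implicitly uses that once $Z\in\mathbf{M}^{1,p/2,p}(V)$ with $Z\in L^{p}(\Omega,V)$ and $DZ\in L^{p}(\Omega,L^p([0,T],\LB_2^0))$, then $Z\in\mathbf{M}^{1,p,p}(V)$---a standard fact, but one that is not spelled out. So the paper's proof is shorter and avoids the OU machinery, while yours is more self-contained at the cost of invoking hypercontractivity.
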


\begin{proof}
  It particularly holds $X \in \mathbf{M}^{1,p,p}(U_1)$, $Y \in
  \mathbf{M}^{1,p,p}(U_2)$, $p > 2$, and, hence,
  we directly obtain from Lemma~\ref{lemma3:prodrule} that $\sigma(X) \cdot Y
  \in \mathbf{M}^{1,\frac{p}{2},p}(V)$.
  In addition, we get
  \begin{align*}
    \| \sigma(X) \cdot Y \|_{L^{2p}(\Omega,V)} \le
    | \sigma |_{\Cb^0(U_1,\LB(U_2,V))} \| Y
    \|_{L^{2p}(\Omega,U)} \le | \sigma |_{\Cb^0(U_1,\LB(U_2,V))} \| Y
    \|_{\mathbf{G}^{1,p}(U)}.
  \end{align*}
  Further, by \eqref{Product} we have
  \begin{align*}
    &\big\|D(\sigma(X) \cdot Y )\big\|_{L^p(\Omega, L^p([0,T],\LB_2(H_0,V)))}\\
    &\quad = \big\| \sigma'(X) \cdot \big( DX, Y \big) + \sigma(X) \cdot
    D Y \big\|_{L^p(\Omega,L^p( [0,T],\LB_2(H_0,V)))}\\
    &\quad \le |\sigma|_{\Cb^1(U_1,\LB(U_2,V))} \Big( \E \Big[
    \| D X \|_{L^p([0,T], \LB_2(H_0,U_1))}^p\, \|
    Y\|^p_{U_2} \Big] \Big)^{\frac{1}{p}}\\
    &\qquad + |\sigma|_{\Cb^0(U_1,\LB(U_2,V))}
    \| D Y \|_{L^p(\Omega, L^p ([0,T],\LB_2(H_0,U_2)))}\\
    &\quad \le |\sigma|_{\Cb^1(U_1,\LB(U_2,V))}
    \| D X \|_{L^{2p}(\Omega, L^p([0,T], \LB_2(H_0,U_1)))} \| Y
    \|_{L^{2p}(\Omega,U_2)}\\
    &\qquad +|\sigma|_{\Cb^0(U_1,\LB(U_2,V))}  \| D Y
    \|_{L^p(\Omega, L^p( [0,T],\LB_2(H_0,U_2)))}\\
    &\quad \leq \max \big( |\sigma|_{\Cb^1(U_1,\LB(U_2,V))},
    \|\sigma\|_{\Cb^0(U_1,\LB(U_2,V))} \big)
    \Big(1 +  \| X \|_{\mathbf{M}^{1,2p,p}(U_1)} \Big) \| Y
      \|_{\mathbf{G}^{1,p}(U_2)}.
  \end{align*}
  These bounds show that $\sigma'(X) \cdot Y \in \mathbf{G}^{1,p}(V)$
  as well as the desired bound.
\end{proof}

In the proof of Lemma~\ref{lemma5:Lipschitz} below we require that a
particular random \stig{linear} operator is bounded
$\mathbf{G}^{1,p}(U)^*\to \mathbf{G}^{1,p}(V)^*$. The next \stig{lemma}
provides a criterion for this, in terms of the boundedness
$\mathbf{G}^{1,p}(V)\to \mathbf{G}^{1,p}(U)$ of a suitably defined
adjoint operator. It is also used in the proof of
Lemma~\ref{lemma3:Strong2} with a non-random operator.

\begin{lemma}\label{lemma3:ideal}
  Let $U,V$ be separable Hilbert spaces,
  $S\in L^\infty(\Omega,\LB(U,V))$, and $2\leq p<\infty$,
  $2\leq q \leq \infty$.  Define $S^*\in L^\infty(\Omega,\LB(V,U))$ as
  the almost sure adjoint of $S$, i.e., $S^*(\omega)=(S(\omega))^*$,
  {a.a.} $\omega\in\Omega$.  Let either $E=\mathbf{M}^{1,p,q}(U)$,
  $F=\mathbf{M}^{1,p,q}(V)$ or $E=\mathbf{G}^{1,p}(U)$,
  $F=\mathbf{G}^{1,p}(V)$. If $S^*\in \LB(F,E)$, then
  $S\in\LB(E^*,F^*)$ with
  $\|S\|_{\LB(E^*,F^*)}\leq \|S^*\|_{\LB(F,E)}$.  In particular, if 
  $S\in\LB(U,V)$ is non-random, then
  $\|S\|_{\LB(E^*,F^*)}\leq\|S\|_{\LB(U,V)}$.
\end{lemma}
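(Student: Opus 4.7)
The strategy is standard duality: extend $S$ from $L^2(\Omega,U)$ to $E^\ast$ by transposing the action of $S^\ast$, and then verify the non-random case by showing that a non-random operator commutes with the Malliavin derivative on the core.

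First I would define, for $Z\in E^*$, the candidate element $SZ\in F^*$ by
\begin{align*}
  [SZ,Y]_F := [Z,S^*Y]_E, \quad Y\in F,
\end{align*}
where $[\cdot,\cdot]_E$ and $[\cdot,\cdot]_F$ denote the dual pairings of the respective Gelfand triples. The right-hand side is well-defined precisely because the hypothesis $S^*\in\LB(F,E)$ guarantees $S^*Y\in E$. Linearity in $Z$ is clear, and the estimate
\begin{align*}
  \big|[SZ,Y]_F\big|
  \leq \|Z\|_{E^*}\,\|S^*Y\|_E
  \leq \|S^*\|_{\LB(F,E)}\,\|Z\|_{E^*}\,\|Y\|_F
\end{align*}
shows both that $SZ\in F^*$ and that $\|S\|_{\LB(E^*,F^*)}\leq\|S^*\|_{\LB(F,E)}$.

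Second, I would verify that this definition is consistent with the pointwise action of $S$ on $L^2(\Omega,U)$, so that no ambiguity arises under the identifications $L^2(\Omega,U)\subset E^*$ and $L^2(\Omega,V)\subset F^*$. If $Z\in L^2(\Omega,U)$, then $SZ\in L^2(\Omega,V)$ because $S\in L^\infty(\Omega,\LB(U,V))$, and for any $Y\in F\subset L^2(\Omega,V)$
\begin{align*}
  [Z,S^*Y]_E
  = \inner{Z}{S^*Y}{L^2(\Omega,U)}
  = \E\bigl[\inner{Z(\omega)}{S^*(\omega)Y(\omega)}{U}\bigr]
  = \inner{SZ}{Y}{L^2(\Omega,V)},
\end{align*}
which matches the embedding of $SZ$ into $F^*$.

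For the ``in particular'' statement I need to check that a non-random $S\in\LB(U,V)$ satisfies $S^*\in\LB(F,E)$ with $\|S^*\|_{\LB(F,E)}\leq\|S\|_{\LB(U,V)}$. Here the key point is that $S^*$ commutes with the Malliavin derivative. On the core $\mathcal{S}^q(V)$, for $Y=\sum_{j}v_j\otimes F_j$ we have $S^*Y=\sum_j (S^*v_j)\otimes F_j\in\mathcal{S}^q(U)$ and
\begin{align*}
  D_t(S^*Y)=\sum_j (S^*v_j)\otimes D_tF_j = S^*\circ D_tY, \quad t\in[0,T],
\end{align*}
where the composition on the right is in $\LB_2(H_0,U)$, with norm at most $\|S^*\|_{\LB(V,U)}\|D_tY\|_{\LB_2(H_0,V)}=\|S\|_{\LB(U,V)}\|D_tY\|_{\LB_2(H_0,V)}$. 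Combining this with the pointwise bound $\|S^*Y\|_{L^p(\Omega,U)}\leq \|S\|_{\LB(U,V)}\|Y\|_{L^p(\Omega,V)}$ and (in the $\mathbf{G}$-case) the analogous $L^{2p}$ bound gives $\|S^*Y\|_E\leq \|S\|_{\LB(U,V)}\|Y\|_F$ for all $Y$ in the core, and hence on all of $F$ by density together with closability of $D$ (Lemma~\ref{lemma3:closable}).

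The only mildly delicate step is this last density/closability argument, since the refined spaces $\mathbf{M}^{1,p,q}$ are defined as completions of a non-standard core; but because $S^*$ is continuous on $L^p(\Omega,V)$ and satisfies the commutation above on $\mathcal{S}^q(V)$, its extension to $F$ is immediate and its image automatically lies in $E$. All other steps are routine duality manipulations.
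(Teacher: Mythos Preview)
Your proof is correct and follows the same duality approach as the paper: the paper also passes from $\langle SY,Z\rangle_{L^2(\Omega,V)}$ to $\langle Y,S^*Z\rangle_{L^2(\Omega,U)}$ and then bounds via $\|S^*\|_{\LB(F,E)}$, and for the non-random case likewise uses $DS^*Z=S^*DZ$. Your version is simply more explicit about defining the extension on all of $E^*$ by transposition (rather than computing on $L^2(\Omega,U)$ and implicitly extending by density) and about verifying the commutation with $D$ on the core before passing to the closure; the paper compresses both of these steps into one line each.
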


\begin{proof}
We conclude that $S\colon F^*\to E^*$ is bounded by the duality calculation
\begin{align*}
  \|SY\|_{F^*}
& =
  \sup_{\|Z\|_F\leq1}
  \langle
    SY,Z
  \rangle_{L^2(\Omega;V)}
  =
  \|S^*\|_{\LB(F,E)}
  \sup_{\|Z\|_F\leq1}
  \Big\langle
    Y,
    \frac{
      S^*Z
    }{
      \|S^*\|_{\LB(F,E)}
    }
  \Big\rangle_{L^2(\Omega;U)}\\
& \leq
  \|S^*\|_{\LB(F,E)}
  \sup_{\|Z\|_E\leq1}
  \langle
    Y,Z
  \rangle_{L^2(\Omega;U)}
  =
    \|S^*\|_{\LB(F,E)}
    \|Y\|_{E^*}.
\end{align*}
Consider non-random $S\in\LB(U,V)$. For $E=\mathbf{M}^{1,p,q}(U)$, $F=\mathbf{M}^{1,p,q}(V)$ we note 
that $\|S^*\|_{\LB(F,E)}\leq\|S^*\|_{\LB(V,U)}=\|S\|_{\LB(U,V)}$,
because $D\stig{S^*}Z=\stig{S^*}DZ$ for $Z\in\mathbf{M}^{1,p,q}(\stig{V})$. The case 
$E=\mathbf{G}^{1,p}(U)$, $F=\mathbf{G}^{1,p}(V)$ follows from this.
\end{proof}

Our next key result is stated in Lemma~\ref{lemma5:Lipschitz}
below. It establishes a local Lipschitz bound in the
$\mathbf{G}^{1,p}(H)^*$-norm.  This allows us to perform a Gronwall
argument in this norm in Section~\ref{subsec4:2}.

\begin{lemma}
  \label{lemma5:Lipschitz}
  Let $U,V$ be separable Hilbert spaces, $\eta \in \Cb^2(U,V)$, and
  $2<p<\infty$.  Then, for all $X_1, X_2 \in \mathbf{M}^{1,2p,p}(U)$,
  \begin{align*}
    &\big\| \eta(X_1) - \eta(X_2) \big\|_{\mathbf{G}^{1,p}(V)^\ast}\\
    &\quad \le \max \big(
    |\eta|_{\Cb^1(U,V)},|\eta|_{\Cb^2(U,V)}
    \big) \Big(1 +  \sum_{i =1}^2 \|X_i \|_{\mathbf{M}^{1,2p,p}(U)} \Big)
    \big\| X_1 - X_2 \big\|_{\mathbf{G}^{1,p}(U)^\ast}.
  \end{align*}
\end{lemma}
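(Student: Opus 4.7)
The plan is a three-step reduction: Taylor expansion to linearize the difference, Lemma~\ref{lemma3:ideal} to pass to dual spaces, and Lemma~\ref{lemma3:prodrule2} combined with a Bochner integration to control the resulting adjoint. First, I apply the Taylor formula \eqref{eq2:Taylor2} pointwise in $\omega$ to write
\begin{align*}
  \eta(X_1)-\eta(X_2) = \tilde{\eta}\cdot(X_1-X_2),\quad
  \tilde{\eta}:=\int_0^1\eta'\bigl(X_2+\rho(X_1-X_2)\bigr)\diff{\rho}\in L^\infty(\Omega,\LB(U,V)),
\end{align*}
noting that $\|\tilde{\eta}\|_{L^\infty(\Omega,\LB(U,V))}\le|\eta|_{\Cb^1(U,V)}$. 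Setting $E=\mathbf{G}^{1,p}(U)$ and $F=\mathbf{G}^{1,p}(V)$, Lemma~\ref{lemma3:ideal} reduces the claim to the operator bound $\|\tilde{\eta}^*\|_{\LB(F,E)}\le M$ with the pointwise adjoint $\tilde{\eta}^*$ and $M:=\max(|\eta|_{\Cb^1(U,V)},|\eta|_{\Cb^2(U,V)})\bigl(1+\sum_{i=1}^2\|X_i\|_{\mathbf{M}^{1,2p,p}(U)}\bigr)$.

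Next, I set $\sigma(x):=\eta'(x)^*\in\LB(V,U)$. Since taking the Hilbert-space adjoint is an isometry $\LB(U,V)\to\LB(V,U)$ and $\eta\in\Cb^2(U,V)$, we have $\sigma\in\Cb^0(U,\LB(V,U))\cap\Cb^1(U,\LB(V,U))$ with $|\sigma|_{\Cb^0}=|\eta|_{\Cb^1(U,V)}$ and $|\sigma|_{\Cb^1}=|\eta|_{\Cb^2(U,V)}$. Fix $Y\in\mathbf{G}^{1,p}(V)$. For each $\rho\in[0,1]$ the convex combination $X_2+\rho(X_1-X_2)$ belongs to $\mathbf{M}^{1,2p,p}(U)$ with norm at most $\sum_{i=1}^2\|X_i\|_{\mathbf{M}^{1,2p,p}(U)}$, so Lemma~\ref{lemma3:prodrule2}, applied with $U_1=U$, $U_2=V$, and the target $V$ replaced by $U$, yields the $\rho$-uniform bound
\begin{align*}
  \bigl\|\sigma(X_2+\rho(X_1-X_2))\cdot Y\bigr\|_{\mathbf{G}^{1,p}(U)}
  \le M\,\|Y\|_{\mathbf{G}^{1,p}(V)}.
\end{align*}
Integrating over $\rho\in[0,1]$ as a Bochner integral in $\mathbf{G}^{1,p}(U)$, the triangle inequality produces $\|\tilde{\eta}^*\cdot Y\|_{\mathbf{G}^{1,p}(U)}\le M\,\|Y\|_{\mathbf{G}^{1,p}(V)}$, which is the operator bound demanded at the end of Step~1, and Lemma~\ref{lemma3:ideal} closes the argument.

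The main technical point is justifying this last Bochner integration: that $\rho\mapsto\sigma(X_2+\rho(X_1-X_2))\cdot Y$ is continuous from $[0,1]$ into $\mathbf{G}^{1,p}(U)$ and that its integral coincides with the pointwise-defined $\tilde{\eta}^*\cdot Y$, including commutation with $D$. Continuity follows from the strong continuity of $\sigma$ and $\sigma'$ (inherited from $\eta\in\Cb^2(U,V)$) combined with dominated convergence applied to the explicit product-rule formula \eqref{Product} for $D(\sigma(X_2+\rho(X_1-X_2))\cdot Y)$, using the uniform pointwise bound on the integrand provided above. The interchange with $D$ then follows from the closedness of the Malliavin derivative proved in Lemma~\ref{lemma3:closable}.
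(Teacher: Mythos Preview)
Your proof is correct and uses exactly the same three ingredients as the paper: the Taylor linearization~\eqref{eq2:Taylor2}, the duality Lemma~\ref{lemma3:ideal}, and Lemma~\ref{lemma3:prodrule2} applied to $\sigma(x)=\eta'(x)^*$. The only difference is the order in which you integrate over $\rho$ and invoke Lemma~\ref{lemma3:ideal}. The paper applies Lemma~\ref{lemma3:ideal} for each fixed $\rho$ to the operator $S=\eta'(X_2+\rho(X_1-X_2))$ and then integrates the resulting pointwise bound in the \emph{dual} norm $\mathbf{G}^{1,p}(V)^*$; this only needs Bochner integrability of $\rho\mapsto\eta'(X_\rho)\cdot(X_1-X_2)$ in $L^2(\Omega,V)\subset\mathbf{G}^{1,p}(V)^*$, which is immediate. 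You instead apply Lemma~\ref{lemma3:ideal} once to the integrated operator $\tilde\eta$ and are then forced to carry out the $\rho$-integration in the \emph{primal} space $\mathbf{G}^{1,p}(U)$, which is why your final paragraph must address continuity in the graph norm and the closedness of $D$. Both orderings are valid, but the paper's sidesteps that additional justification entirely.
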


\begin{proof}
  In view of \eqref{eq2:Taylor2} it suffices to show
  \begin{align*}
    \begin{split}
      &\|\eta'(X)\cdot Y\|_{\mathbf{G}^{1,p}(V)^\ast}\\
      &\quad \leq \max \big(
      |\eta|_{\Cb^1(U,V)},|\eta|_{\Cb^2(U,V)}
      \big) \Big(1 +  \| X \|_{\mathbf{M}^{1,2p,p}(U)} \Big)  \| Y
      \|_{\mathbf{G}^{1,p}(U)^\ast},
    \end{split}
  \end{align*}
  for all $X,Y\in\mathbf{M}^{1,2p,p}(U)$. By Lemma~\ref{lemma3:ideal} we have
  \begin{align*}
    \| \eta'(X) \cdot Y \big\|_{\mathbf{G}^{1,p}(V)^\ast} \leq \big\|
    \eta'(X)^* \big\|_{\LB(\mathbf{G}^{1,p}(V),\mathbf{G}^{1,p}(U))}
    \big\| Y \big\|_{\mathbf{G}^{1,p}(U)^*}.
  \end{align*}
  To bound
  $
  \|
    \eta'(X)^* 
  \|_{\LB(\mathbf{G}^{1,p}(V),\mathbf{G}^{1,p}(U))}
  $
  we define
  $\sigma \colon U \to \LB(V,U)$ by
  \begin{align*}
    \sigma(x) := \eta'(x)^\ast.
  \end{align*}
  Then $\sigma \in \Cb^0(U, \LB(V,U)) \cap \Cb^1(U, \LB(V,U))$
  with $| \sigma |_{\Cb^0(U,\LB(V,U))} = | \eta |_{\Cb^1(U,V)}$ and $|
  \sigma |_{\Cb^1(U,\LB(V,U))} = | \eta |_{\Cb^2(U,V)}$. Hence, the
  assertion follows directly from an application of
  Lemma~\ref{lemma3:prodrule2}.
\end{proof}

\subsection{Regularity of the solution}
\label{subsec3:3}
Here we prove regularity in terms of the Malliavin derivative, as well
as H\"{o}lder continuity in the $\mathbf{M}^{1,p,q}(H)^*$-norm, of the
solution $X$ to \eqref{eq1:SPDEmild} under Assumption~\ref{as1:A}. For
suitably chosen $p$ and $q$ the H\"{o}lder exponent turns out to be
twice as high as in the $L^2(\Omega,H)$-norm. By combining these
results with a duality argument we show H\"{o}lder continuity of the
Markov semigroup. The H\"older exponent is later, in
Theorem~\ref{thm1:main}, shown to coincide with the rate of weak
convergence, which is natural.

The Malliavin derivative $D_rX(t)$ of $X(t)$ at time $r\in[0,T]$
satisfies the equation, see \cite{FuhrmanTessitore}*{Proposition
  3.5~(ii)},
\begin{equation}
  \label{eq3:DX}
  D_r X(t) =
  \begin{cases}
  S(t-r)+ \displaystyle{\int_r^t S(t-s) F'(X(s)) D_r X(s)\diff{s}}, &\quad
  t\in (r,T],\\
0, &\quad t\in[0,r].
  \end{cases}
\end{equation}
The next result can be verified by using (3.11) of
\cite{FuhrmanTessitore}*{Proposition 3.5 (ii)} and holds for
multiplicative noise, as well. For completeness we present a proof in
the simpler case of additive noise that we consider here.

\begin{proposition}
  \label{prop3:reg}
  Let Assumption~\ref{as1:A} hold and let $X$ be the solution of
  \eqref{eq1:SPDEmild}. If $\beta\in(0,1)$, then
  \begin{align*}
    \sup_{t\in[0,T]}\big\|X(t)\big\|_{\mathbf{M}^{1,p,q}(H)}<\infty,
  \end{align*}
  for $2\leq p<\infty$ and $2\leq q<\tfrac2{1-\beta}$. If $\beta=1$, then
  the same holds for $2\leq p<\infty$ and $2\leq q\leq\infty$.
\end{proposition}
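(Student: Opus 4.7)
The plan is to exploit the explicit equation \eqref{eq3:DX} for the Malliavin derivative together with the semigroup integrability result of Lemma~\ref{lemma:Slqbound}. That $X(t)\in\mathbf{M}^{1,p,2}(H)$ with Malliavin derivative given by \eqref{eq3:DX} is already supplied by \cite{FuhrmanTessitore}*{Proposition~3.5~(ii)}; the new content is the $L^q$-in-time integrability of $DX(t)$ and the membership of $X(t)$ in the smaller space $\mathbf{M}^{1,p,q}(H)$.

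First I would take the $\LB_2^0$-norm in \eqref{eq3:DX}, using the ideal property of the Hilbert-Schmidt class together with the uniform bounds $\|S(t)\|_{\LB}\leq C$ on $[0,T]$ and $\|F'(X(s))\|_{\LB}\leq|F|_{\Cb^1(H,H)}$ from Assumption~\ref{as1:A}~(iv), to obtain the pathwise integral inequality
\begin{align*}
  \|D_r X(t)\|_{\LB_2^0}
  \leq \|S(t-r)\|_{\LB_2^0}
  +C\int_r^t\|D_r X(s)\|_{\LB_2^0}\diff{s},\quad t\in(r,T],
\end{align*}
with a deterministic constant $C$. A standard Gronwall argument in the variable $t$, with forcing $s\mapsto\|S(s-r)\|_{\LB_2^0}$, then yields the pathwise, deterministic bound
\begin{align*}
  \|D_r X(t)\|_{\LB_2^0}
  \leq C\Big(\|S(t-r)\|_{\LB_2^0}+\int_r^t\|S(s-r)\|_{\LB_2^0}\diff{s}\Big)
  \quad\text{a.s.}
\end{align*}

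Taking the $L^q([0,T])$-norm in $r$ (extending $D_rX(t)$ by zero for $r>t$), the first term contributes $\|S\|_{L^q([0,T],\LB_2^0)}$, while the second is controlled by Minkowski's integral inequality by $T\|S\|_{L^q([0,T],\LB_2^0)}$. Both are finite precisely under the hypothesis $q<2/(1-\beta)$, respectively $q\leq\infty$ if $\beta=1$, by Lemma~\ref{lemma:Slqbound}, and both are independent of $t$. Combining with the moment bound \eqref{ineq2:moment} gives $\sup_{t\in[0,T]}\|X(t)\|_{\mathbf{M}^{1,p,q}(H)}<\infty$, where the right-hand side of the Gronwall estimate guarantees uniformity in $t$.

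The main obstacle is placing $X(t)$ in $\mathbf{M}^{1,p,q}(H)=\overline{\mathcal{S}^q(H)}$, rather than merely verifying that the graph-norm expression is finite. I would approximate $X$ by smooth cylindrical (e.g.\ Galerkin or Picard) iterates inside $\mathcal{S}^q(H)$, verify the same deterministic Gronwall bound uniformly along the approximation, and pass to the limit using the closability of $D$ established in Lemma~\ref{lemma3:closable}. This step is routine once the key deterministic bound above is in place, but it is where the definition of $\mathbf{M}^{1,p,q}(H)$ via closure---rather than via the graph norm---requires care.
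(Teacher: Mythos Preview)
Your argument is correct and close in spirit to the paper's, but the order of operations differs in a way worth noting. The paper takes the full $L^p(\Omega,L^q([0,T],\LB_2^0))$-norm in $r$ on both sides of \eqref{eq3:DX} first, uses Minkowski on the convolution (after observing $D_rX(s)=0$ for $s\le r$ so that the $s$-integral runs over $[0,t]$), and only then applies Gronwall to the scalar function $t\mapsto\|DX(t)\|_{L^p(\Omega,L^q([0,T],\LB_2^0))}$. You instead fix $(r,\omega)$, run Gronwall pathwise in $t$ on $\|D_rX(t)\|_{\LB_2^0}$, and take the $L^q_r$-norm afterwards. Your route exploits that the noise is additive, so that the coefficient $\|F'(X(s))\|_{\LB}$ admits a deterministic bound; it yields the stronger conclusion of an almost-sure (in fact deterministic) bound on $\|DX(t)\|_{L^q([0,T],\LB_2^0)}$, whereas the paper obtains only the $L^p(\Omega)$-bound actually needed. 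The paper's ordering is the one that generalises more directly to multiplicative noise, but for the present setting both are equally valid and of comparable length.

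Your final paragraph on the closure issue is a fair point: the paper, like you, simply bounds the graph-norm expression and does not spell out the passage from finiteness of $\|X(t)\|_{L^p(\Omega,H)}+\|DX(t)\|_{L^p(\Omega,L^q)}$ to membership in $\mathbf{M}^{1,p,q}(H)=\overline{\mathcal{S}^q(H)}$. Your proposed approximation argument is a reasonable way to fill this; the paper evidently regards it as routine.
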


\begin{proof}
  We remark that the case $p=q=2$ was already proved in
  \cite{FuhrmanTessitore}. The moment estimate \eqref{ineq2:moment}
  implies that $\sup_{t\in[0,T]}\|X(t)\|_{L^{p}(\Omega,H)}<\infty$ for
  $2\leq p<\infty$.  Next we take norms in \eqref{eq3:DX} and use
  Minkowski's inequality on the convolution term.  We note that
  $D_rX(s)=0$ for $s\le r$ because $X(s)$ is $\F_r$-measurable, so
  that the convolution term can be written $\int_0^t \dots \diff{s}$.
  We get
  \begin{equation*}
    \begin{split}
&\big\|DX(t)\big\|_{L^p(\Omega,L^q([0,T],\LB_2^0))}
=
\big\|DX(t)\big\|_{L^p(\Omega,L^q([0,t],\LB_2^0))}
\\ & \quad
\leq
\big\|S(t-\cdot)\big\|_{L^q([0,t],\LB_2^0)}
+
\Big\|
\int_{0}^t
S(t-s) F'(X(s)) D X(s)\diff{s}
\Big\|_{L^p(\Omega,L^q([0,t],\LB_2^0))}
\\
& \quad \leq
\big\| S \big\|_{L^q([0,t],\LB_2^0)}
+\int_0^t
\big\| S(t-s) F'(X(s)) DX(s)
\big\|_{L^p(\Omega,L^q([0,t],\LB_2^0))} \diff{s}
\\ & \quad\leq
\big\|S\big\|_{L^q([0,T],\LB_2^0)}
+\|S\|_{L^\infty([0,T],\LB)}|F|_{\Cb^1}
\int_0^t \big\|DX(s)\big\|_{L^p(\Omega,L^q([0,T],\LB_2^0))}\diff{s}.
    \end{split}
  \end{equation*}
  We conclude by using Lemma~\ref{lemma:Slqbound} and the standard
  Gronwall lemma.
\end{proof}

We next consider H\"older continutity in the
$\mathbf{M}^{1,p,q}(H)^\ast$-norm.  For comparison we recall that the
H\"older exponent in the $L^2(\Omega,H)$-norm is $\gamma<\beta/2$
under Assumption~\ref{as1:A}. Here we have $\gamma<\beta$, if $q$ is
sufficiently large.

\begin{proposition}
  \label{lemma4:tempreg}
  Let Assumption~\ref{as1:A} hold with $\beta \in (0,1]$ and denote by
  $X$ the solution to \eqref{eq1:SPDEmild}. Let $2\le p<\infty$,
  $\gamma\in[0,\beta)$, and set $q=\tfrac2{1-\gamma}$. Then there
  exists a constant $C=C_\gamma$ such that
  \begin{align*}
    \big\| X(t_2)-X(t_1) \big\|_{\mathbf{M}^{1,p,q}(H)^\ast} \le\,
    C \Big(1+\big\|X_0\big\|_{\dot{H}^{2\beta}}\Big)\big| t_2 -
    t_1\big|^{\gamma},\quad t_1, t_2 \in [0,T].
  \end{align*}
\end{proposition}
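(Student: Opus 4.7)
The plan is to decompose $X(t_2)-X(t_1)$ using the mild formulation \eqref{eq1:SPDEmild} as
\begin{align*}
  X(t_2)-X(t_1)
  &= \big(S(t_2)-S(t_1)\big)X_0
   + \int_{t_1}^{t_2} S(t_2-s)F(X(s))\diff{s}\\
  &\quad+\int_0^{t_1}\big(S(t_2-s)-S(t_1-s)\big)F(X(s))\diff{s}
   + \int_{t_1}^{t_2} S(t_2-s)\diff{W(s)}\\
  &\quad+\int_0^{t_1}\big(S(t_2-s)-S(t_1-s)\big)\diff{W(s)}.
\end{align*}
Because the Gelfand triple yields a continuous embedding $L^2(\Omega,H)\hookrightarrow \mathbf{M}^{1,p,q}(H)^\ast$, the deterministic and drift terms can be estimated in the stronger $L^2(\Omega,H)$-norm; only the stochastic contributions will require the full strength of Theorem~\ref{lemma2:dual3}.

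For the initial value term I factor $(S(t_2)-S(t_1))X_0 = (S(t_2-t_1)-I)S(t_1)X_0$ and apply \eqref{as1:SAnalytic2} with $\varrho=\gamma$, which (using $\gamma\le\beta$ and $X_0\in\dot{H}^{2\beta}$) gives the bound $C(t_2-t_1)^\gamma\|X_0\|_{2\beta}$. Since $F\in\Cb^1(H,H)$, the moment estimate \eqref{ineq2:moment} implies $\sup_{s\in[0,T]}\|F(X(s))\|_{L^2(\Omega,H)}\le C(1+\|X_0\|)$. The first drift integral is then $\lesssim t_2-t_1\lesssim (t_2-t_1)^\gamma$, while the second, after writing $S(t_2-s)-S(t_1-s)=(S(t_2-t_1)-I)A^{-\gamma}\cdot A^\gamma S(t_1-s)$ and combining \eqref{as1:SAnalytic} with \eqref{as1:SAnalytic2}, is bounded by $C(t_2-t_1)^\gamma\int_0^{t_1}(t_1-s)^{-\gamma}\diff{s}\lesssim (t_2-t_1)^\gamma$.

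The core step is to combine the two stochastic increments into a single It\^o integral $\delta\Phi=\int_0^T\Phi\diff{W}$ with the deterministic predictable integrand $\Phi(s)=\one_{[0,t_1]}(s)(S(t_2-s)-S(t_1-s))+\one_{(t_1,t_2]}(s)S(t_2-s)$, and then apply Theorem~\ref{lemma2:dual3}, giving
\begin{align*}
  \Big\|\int_0^T\Phi\diff{W}\Big\|_{\mathbf{M}^{1,p,q}(H)^\ast}
  \leq \|\Phi\|_{L^{q'}([0,T],\LB_2^0)},\quad q'=\tfrac{2}{1+\gamma}.
\end{align*}
On $[t_1,t_2]$, the estimate $\|S(u)\|_{\LB_2^0}\lesssim u^{-(1-\beta)/2}\|A^{(\beta-1)/2}\|_{\LB_2^0}$ from Assumption~\ref{as1:A}~(iii) and \eqref{as1:SAnalytic}, together with $q'(1-\beta)/2<1$, yields a contribution of order $(t_2-t_1)^{(\gamma+\beta)/2}\le T^{(\beta-\gamma)/2}(t_2-t_1)^\gamma$. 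On $[0,t_1]$, factoring $S(t_2-s)-S(t_1-s)=(S(t_2-t_1)-I)A^{-\gamma}\cdot A^\gamma S(t_1-s)$ gives
\begin{align*}
  \|S(t_2-s)-S(t_1-s)\|_{\LB_2^0}
  \lesssim (t_2-t_1)^\gamma (t_1-s)^{-\gamma-(1-\beta)/2},
\end{align*}
whose $q'$-th power has the $s$-singularity $(t_1-s)^{-(2\gamma+1-\beta)/(1+\gamma)}$, integrable on $[0,t_1]$ precisely when $\gamma<\beta$; taking the $q'$-th root then contributes a further factor $(t_2-t_1)^\gamma$.

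The main obstacle is balancing these exponents in the $[0,t_1]$ stochastic piece: the gain $(t_2-t_1)^\gamma$ from the temporal increment of $S$ comes at the cost of a stronger residual singularity in $t_1-s$, which is integrable under $L^{q'}$ only thanks to the specific choice $q=\tfrac{2}{1-\gamma}$, and the resulting condition tightens to $\gamma<\beta$ exactly at the threshold. This is the point where the refined Sobolev-Malliavin dual norm buys the H\"older exponent $\gamma<\beta$, doubling the strong exponent $\gamma<\beta/2$ that one would extract from Lemma~\ref{lemma2:Burkholder}.
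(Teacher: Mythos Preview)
Your proof is correct and follows essentially the same route as the paper's: the same five-term decomposition of $X(t_2)-X(t_1)$, the same $L^2(\Omega,H)$-treatment of the deterministic and drift pieces via \eqref{as1:SAnalytic}--\eqref{as1:SAnalytic2}, and the same use of Theorem~\ref{lemma2:dual3} on the stochastic increments, with the identical exponent check $q'\tfrac{2\gamma+1-\beta}{2}<1$ that the paper records as \eqref{calc3:exponent}. The only cosmetic difference is that you merge the two stochastic integrals into a single $\Phi$ before applying Theorem~\ref{lemma2:dual3}, whereas the paper bounds them separately; since the intervals $[0,t_1]$ and $(t_1,t_2]$ are disjoint this is immaterial.
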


\begin{proof}
  Without loss of generality we assume $t_2 > t_1>0$. From
  \eqref{eq1:SPDEmild} we then get
  \begin{align*}
    X(t_2) - X(t_1) &= \big( S(t_2 - t_1) - I \big) S(t_1) X_0\\
    &\quad + \big( S(t_2 - t_1) - I \big) \int_{0}^{t_1}
    S(t_1 - s) F(X(s)) \diff{s}\\
    &\quad + \big( S(t_2 - t_1) - I \big) \int_{0}^{t_1} S(t_1 - s)
    \diff{W(s)}\\
    &\quad + \int_{t_1}^{t_2} S(t_2 - s) F(X(s)) \diff{s} + \int_{t_1}^{t_2}
    S(t_2 - s) \diff{W(s)}.
  \end{align*}
  In the following we study the $\mathbf{M}^{1,p,q}(H)^\ast$-norms of
  these five summands. For the first, second, and fourth terms we use
  the fact that $\| Z \|_{\mathbf{M}^{1,p,q}(H)^\ast} \le \| Z
  \|_{L^2(\Omega,H)}$.

  For the first summand, we use \eqref{as1:SAnalytic2} with
    $\varrho = \gamma$ and \eqref{as1:SAnalytic} with $\varrho=0$ as
    well as Assumption~\ref{as1:A} (ii). This yields
  \begin{align*}
    \big\| \big( S(t_2 - t_1) - I \big)S(t_1) X_0
    \big\|_{\mathbf{M}^{1,p,q}(H)^\ast}
    &\le \big\| \big( S(t_2 - t_1) - I \big) A^{-\gamma} S(t_1)A^\gamma  X_0
    \big\|_{L^2(\Omega,H)} \\
    &\lesssim \big|t_2 - t_1\big|^{\gamma} \|A^\gamma X_0 \|
  \lesssim \big|t_2 - t_1\big|^{\gamma} \|X_0 \|_{\dot{H}^{2\beta}}.
  \end{align*}

  The estimate of the second summand is done by applying Assumption~\ref{as1:A}
  (iv) and the same arguments as for the first term. More precisely,
  we use that $F\in \Cb^1(H,H)$ implies linear growth, to get
  \begin{align*}
    &\Big\| \big( S(t_2 - t_1) - I \big) \int_{0}^{t_1}
    S(t_1 - s) F(X(s)) \diff{s} \Big\|_{\mathbf{M}^{1,p,q}(H)^\ast} \\
    &\quad \le \big\| \big(S(t_2 - t_1) - I \big) A^{-\gamma}
    \big\|_{\LB} \int_{0}^{t_1} \big\| A^{\gamma} S(t_1 - s) \big\|_{\LB}
    \big\| F(X(s)) \big\|_{L^2(\Omega,H)} \diff{s}\\
    &\quad \lesssim \big|t_2 - t_1\big|^{\gamma} \int_{0}^{t_1} (t_1 -
    s)^{-\gamma} \diff{s}\, \Big( 1+ \sup_{s \in [0,T]} \| X(s)
    \|_{L^2(\Omega,H)} \Big)\,\lesssim\, \big|t_2 - t_1\big|^{\gamma},
  \end{align*}
  where we also used \eqref{ineq2:moment} and that $\gamma<\beta\le1$.

  We now turn to the third term. We recall that
  $q=2/(1-\gamma)$ and $q'=2/(1+\gamma)$. Since $\gamma<\beta$, we
  have
  \begin{align}
    \label{calc3:exponent}
    q'\frac{2\gamma+1-\beta}2 =\frac{2\gamma+1-\beta}{1+\gamma}
    = 1-\frac{\beta-\gamma}{1+\gamma} < 1.
  \end{align}
  We apply Theorem~\ref{lemma2:dual3} to the third summand.  Then by
  \eqref{as1:SAnalytic}, \eqref{as1:SAnalytic2},
  Assumption~\ref{as1:A} (iii), and \eqref{calc3:exponent}, we obtain
  \begin{align*}
    & \Big\| \big( S(t_2 - t_1) - I \big) \int_{0}^{t_1} S(t_1 - s)
    \diff{W(s)} \Big\|_{\mathbf{M}^{1,p,q}(H)^\ast}\\
    &\quad \le \big\| \big( S(t_2 - t_1) - I \big) S(t_1 - \cdot)
    \big\|_{L^{p'}(\Omega,L^{q'}([0,t_1], \LB_2^0))}\\
    &\quad \le \big\| \big(S(t_2 - t_1) - I \big) A^{-\gamma}
    \big\|_{\LB} \Big( \int_{0}^{t_1} \big\| A^{\gamma} A^{\frac{1-\beta}{2}}S(t_1 - s)A^{\frac{\beta-1}{2}}
    \big\|^{q'}_{\LB_2^0} \diff{s} \Big)^{\frac{1}{q'}} \\
    &\quad \lesssim \big|t_2 - t_1\big|^{\gamma}
    \Big( \int_{0}^{t_1} (t_1 - s)^{-q'\frac{2\gamma + 1 - \beta }{2}}
    \diff{s} \, \big\| A^{\frac{\beta -1}{2}} \big\|_{\LB_2^0}^{q'} \Big)^{\frac{1}{q'}}
    \,\lesssim\, \big|t_2 - t_1\big|^{\gamma}.
  \end{align*}

  Next we turn to the fourth term. By applying the same arguments as for the
  second summand, we derive the bound
  \begin{align*}
     \Big\| \int_{t_1}^{t_2} S(t_2 - s) F(X(s)) \diff{s}
    \Big\|_{\mathbf{M}^{1,p,q}(H)^\ast} & \le \int_{t_1}^{t_2} \big\| S(t_2 - s)
    F(X(s)) \big\|_{L^2(\Omega,H)} \diff{s} \\
    & \lesssim | t_2 - t_1 | \Big( 1 + \sup_{s \in [0,T]} \big\| X(s)
    \big\|_{L^2(\Omega,H)} \Big).
  \end{align*}

  Finally, a further application of Theorem~\ref{lemma2:dual3} and
  \eqref{as1:SAnalytic} with $\varrho = \tfrac{1-\beta}2$ yields for the
  fifth summand
\begin{align*}
    \Big\| \int_{t_1}^{t_2} S(t_2 - s) \diff{W(s)}
    \Big\|_{\mathbf{M}^{1,p,q}(H)^\ast}
& \le \Big(
\int_{t_1}^{t_2} \big\| S(t_2-s)
A^{\frac{1-\beta}{2}}\big\|_{\LB}^{q'}
\big\| A^{\frac{\beta - 1}{2}}\big\|^{q'}_{\LB_2^0}
\diff{s} \Big)^{\frac{1}{q'}} \\
&
\lesssim \Big(
\int_{t_1}^{t_2}  (t_2-s)^{- \frac{1-\beta}{2}}
\diff{s} \Big)^{\frac{1}{q'}}
\lesssim |t_2 - t_1|^{\frac{1}{q'}- \frac{1-\beta}{2}}.
  \end{align*}
  By inserting $q' = 2/ (1+ \gamma)$ and $\beta>\gamma$, we see that
  the exponent is
  \begin{align*}
\frac{1}{q'}- \frac{1-\beta}{2} = \frac{1+\gamma}{2}-
\frac{1-\beta}{2}= \frac{\gamma+\beta}{2}>\gamma.
  \end{align*}
This completes the proof.
\end{proof}

As a consequence of Propositions~\ref{prop3:reg} and
\ref{lemma4:tempreg} we now show H\"{o}lder continuity of the
Markov semigroup $(P(t))_{t\in[0,T]}$ related to $X$. This will not be
used in the sequel but it is a neat application of the duality
argument. A similar result, which we are aware of, is 
\cite {jentzen2010}*{Corollary 7}. 
Define for $(t,x)\in[0,T]\times H$,
$(P(t)\varphi)(x)=\E[\varphi(X(t,x))]$, where $X(t,x)$ denotes the
solution to equation \eqref{eq1:SPDEmild} with initial value $X_0=x\in
\dot{H}^{2\beta}$.
\begin{corollary}
  \label{cor:Markov}
  Let Assumption~\ref{as1:A} hold with $\beta\in(0,1]$ and let
  $\varphi\in \Cp^2(H,\R)$. For every
  $\gamma\in[0,\beta)$ there is a constant $C$ such that
  \begin{align*}
    \big|(P(t_2)\varphi)(x)-(P(t_1)\varphi)(x)\big|\leq
    C\big(1+\|x\|_{\dot{H}^{2\beta}}\big)\big| t_2-t_1\big|^\gamma ,\quad
    t_1,t_2\in[0,T],\ x\in \dot{H}^{2\beta}.
  \end{align*}
\end{corollary}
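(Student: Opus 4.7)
The plan is to apply the duality approach that is central to this paper, combined with Proposition~\ref{lemma4:tempreg}. Write $X_i := X(t_i,x)$ for $i=1,2$ and use the linearization identity \eqref{eq2:Taylor2} on the real-valued function $\varphi$, identifying $\varphi'(y) \in H$ via the Riesz representation, to obtain
\begin{equation*}
  (P(t_2)\varphi)(x) - (P(t_1)\varphi)(x) = \E\big[\varphi(X_2) - \varphi(X_1)\big] = \E\big[\big\langle \tilde{\varphi}, X_2 - X_1 \big\rangle\big],
\end{equation*}
with $\tilde{\varphi} = \int_0^1 \varphi'\bigl(\varrho X_2 + (1-\varrho) X_1\bigr) \diff{\varrho}$. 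Fix $\gamma \in [0,\beta)$ and set $q = 2/(1-\gamma)$. For any $p \geq 2$, the Gelfand triple $\mathbf{M}^{1,p,q}(H) \subset L^2(\Omega,H) \subset \mathbf{M}^{1,p,q}(H)^*$ yields
\begin{equation*}
  \big|\E[\varphi(X_2)-\varphi(X_1)]\big| \leq \|\tilde{\varphi}\|_{\mathbf{M}^{1,p,q}(H)}\, \|X_2-X_1\|_{\mathbf{M}^{1,p,q}(H)^*}.
\end{equation*}
Proposition~\ref{lemma4:tempreg} bounds the second factor by $C(1+\|x\|_{\dot{H}^{2\beta}})|t_2-t_1|^\gamma$, which already delivers the desired H\"older exponent. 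It remains to control $\|\tilde{\varphi}\|_{\mathbf{M}^{1,p,q}(H)}$ by a quantity independent of $t_1,t_2$.

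Since $\varphi\in\Cp^2(H,\R)$, the derivative $\varphi'\in\Cp^1(H,H)$ satisfies the polynomial growth hypothesis of Lemma~\ref{lemma3:chainrule} for some $r\ge0$. Because $\gamma<\beta$ forces $q<2/(1-\beta)$, Proposition~\ref{prop3:reg} yields $X_i \in \mathbf{M}^{1,(1+r)p,q}(H)$ with norm controlled by a (polynomial) function of $\|x\|_{\dot{H}^{2\beta}}$. Convexity of the norm shows that $\varrho X_2 + (1-\varrho) X_1$ lies in the same space with a uniform-in-$\varrho$ bound, and Lemma~\ref{lemma3:chainrule} then transfers this to $\varphi'(\varrho X_2+(1-\varrho)X_1)\in \mathbf{M}^{1,p,q}(H)$ with a uniform-in-$\varrho\in[0,1]$ bound. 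Closability of $D$ together with continuity in $\varrho$ of the integrand implies that the Bochner integral over $\varrho$ commutes with $D$, so $\tilde{\varphi}\in\mathbf{M}^{1,p,q}(H)$ with $\|\tilde{\varphi}\|_{\mathbf{M}^{1,p,q}(H)}$ bounded by a constant depending only on $\varphi$ and $\|x\|_{\dot{H}^{2\beta}}$. Combining the two estimates and absorbing the $x$-dependence into the constant (as in the statement) completes the proof.

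The main obstacle is essentially bookkeeping: verifying that $\tilde{\varphi}$ is a bona fide element of the refined Sobolev-Malliavin space. The crucial point is that one cannot directly differentiate the $\varrho$-integral under $D$, so one must invoke closability together with the uniform-in-$\varrho$ bound on the integrand in $\mathbf{M}^{1,p,q}(H)$ to pass the derivative inside. Everything else---the choice of exponent $q$, the application of Proposition~\ref{lemma4:tempreg}, and the chain rule application---is direct.
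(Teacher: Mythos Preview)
The proposal is correct and follows essentially the same argument as the paper's proof: linearize via \eqref{eq2:Taylor2}, apply duality in a refined Sobolev--Malliavin space, bound the first factor by Proposition~\ref{prop3:reg} together with the chain rule (Lemma~\ref{lemma3:chainrule}), and bound the second factor by Proposition~\ref{lemma4:tempreg}. The only cosmetic difference is that the paper takes $p=q=\tfrac{2}{1-\gamma}$ in $\mathbf{M}^{1,p,p}(H)$, whereas you decouple the exponents and keep $p\ge 2$ arbitrary with $q=\tfrac{2}{1-\gamma}$; you are also slightly more explicit about passing $D$ through the $\varrho$-integral, a point the paper simply absorbs into its reference to Lemma~\ref{lemma4:Phi}.
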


\begin{proof}
  We fix $x$ and suppress it from the notation. Applying \eqref{eq2:Taylor2}
  yields
  \begin{align*}
    &\big| (P(t_2)\varphi)(x)-(P(t_1)\varphi)(x)\big|
    =\big|\E\big[\varphi(X(t_2))-\varphi(X(t_1))\big]\big|\\
    &\quad=\Big|\Big\langle\int_0^1\varphi'\big(\varrho
    X(t_2)+(1-\varrho)X(t_1)\big)\diff{\varrho},X(t_2)-X(t_1)
    \Big\rangle_{L^2(\Omega,H)}\Big|.
  \end{align*}
  For arbitrary $p \in [2,\infty)$ we obtain by duality
  \begin{align*}
    &\big| (P(t_2)\varphi)(x)- (P(t_1)\varphi)(x)\big|\\
    &\quad \leq \Big\|\int_0^1\varphi'\big(\varrho
    X(t_2)+(1-\varrho)X(t_1)\big)\diff{\varrho}\Big\|_{\mathbf{M}^{1,p,p}(H)}
    \big\|X(t_2)-X(t_1)\big\|_{\mathbf{M}^{1,p,p}(H)^*}.
  \end{align*}
  Now take $p=\frac{2}{1-\gamma}$.  The first factor is finite by
  Proposition~\ref{prop3:reg} and the chain rule; for details see the
  proof of Lemma~\ref{lemma4:Phi}
  below. Proposition~\ref{lemma4:tempreg} applies to the second factor
  and this completes the proof.
\end{proof}

\begin{remark}
  Proposition~\ref{lemma4:tempreg} can be proved without additional
  difficulties in the case of multiplicative noise and so can
  Proposition~\ref{prop3:reg}, due to the comment right before its
  statement. Therefore, Corollary~\ref{cor:Markov} holds for
  multiplicative noise.
\end{remark}

\begin{remark}
  We end this section with a comment on implications to stochastic
  ordinary differential equations. This corresponds to the case $A=0$,
  $\beta=1$, and multiplicative noise with diffusion coefficient $G\in
  \Cb^2(H,\LB_2^0)$, i.e., we consider the equation
\begin{align}
\label{eq:SDE}
\diff{X(t)}=F(X(t))\diff{t}+G(X(t))\diff{W(t)},\ t\in(0,T];\quad X(0)=X_0.
\end{align}
In this case one can prove Proposition~\ref{lemma4:tempreg} with
$p\geq2$, $q=\infty$, and $\gamma=1$, meaning that the solution is
Lipschitz continuous in time in the
$\mathbf{M}^{1,p,\infty}(H)^*$-norm for every $p\geq2$. For $\beta=1$
the covariance operator $Q$ is of trace class and the cylindrical
Wiener process $W$ is well defined as an $H$-valued Brownian
motion. We see that also $W$ is Lipschitz continuous in
$\mathbf{M}^{1,p,\infty}(H)^*$ by Proposition
\ref{lemma2:dual3}. Indeed,
\begin{align*}
&\big\|W(t_2)-W(t_1)\big\|_{\mathbf{M}^{1,p,\infty}(H)^*}
= \Big\|\int_{t_1}^{t_2}\diff{W(t)}\Big\|_{\mathbf{M}^{1,p,\infty}(H)^*}\\
&\qquad\leq
\big\|\chi_{[t_1,t_2]}\big\|_{L^{p'}(\Omega,L^1([0,T],\LB_2^0))}
=\Tr(Q)\big|t_2-t_1\big|,\quad t_1,t_2\in[0,T].
\end{align*}
This suggests that $\diff{X(t)}=\dot{X}(t)\diff{t}$ and
$\diff{W(t)}=\dot{W}(t)\diff{t}$, where $\dot{X}$ and $\dot{W}$ are
$\mathbf{M}^{1,p,\infty}(H)^*$-valued functions on $[0,T]$. This further
suggests that \eqref{eq:SDE} might be written in the form
\begin{align*}
\dot{X}(t)=F(X(t))+G(X(t))\dot{W}(t).
\end{align*}
If this formulation is useful or fully makes sense is an open
question. There seems to be a connection to the functional white noise
approach of stochastic differential equations, see
\cite{oksendal2007}, that remains to be understood. In this approach the
time derivative of Brownian motion is well defined in the space of
Hida distributions and the corresponding product of $G$ and $\dot{W}$
is the Wick product.
\end{remark}

\subsection{Regularity of the numerical solution}
\label{subsec3:4}
Here we first show a bound on the $p$:th-moment of the discrete
solutions $X_{h,k}$ to \eqref{eq1:SPDEiterat}, uniformly in $h,k \in
(0,1]$, and then we prove a discrete analog of Proposition~\ref{prop3:reg}.

\begin{proposition}
  \label{lemma3:numstab}
  Let Assumptions~\ref{as1:A} and~\ref{as1:Scheme} hold with
  $\beta\in(0,1]$ and let $2\le p<\infty$.  Then
  \begin{align*}
    \max_{n \in \{0,\ldots,N\}} \sup_{h,k \in (0,1]} \big\| X_{h,k}^n
    \big\|_{L^p(\Omega,H)} \le C.
  \end{align*}
\end{proposition}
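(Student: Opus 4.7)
The plan is to split the discrete mild formula \eqref{eq3:SPDEfulldisc} into three contributions, bound each separately uniformly in $h,k$, and close the argument with the discrete Gronwall Lemma~\ref{lemma2:Gronwall}. Writing
\begin{align*}
  X_{h,k}^n = S_{h,k}^n P_h X_0
  + k\sum_{j=0}^{n-1} S_{h,k}^{n-j} F(X_{h,k}^j)
  + \sum_{j=0}^{n-1}\int_{t_j}^{t_{j+1}} S_{h,k}^{n-j}\diff{W(t)}
  =: \mathrm{I}_n + \mathrm{II}_n + \mathrm{III}_n,
\end{align*}
I would bound $\mathrm{I}_n$ using \eqref{as1:Analyt} with $\varrho=0$ together with $\|P_h\|_{\LB}\le 1$, which gives $\|\mathrm{I}_n\|_{L^p(\Omega,H)}\lesssim \|X_0\|$, uniformly in $n,h,k$. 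This is deterministic, so no $L^p$-overhead appears.

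For the stochastic convolution $\mathrm{III}_n$, the integrand is a simple deterministic step function, so Burkholder's inequality (Lemma~\ref{lemma2:Burkholder}) yields
\begin{align*}
  \|\mathrm{III}_n\|_{L^p(\Omega,H)}
  \le C_p \Big( k\sum_{j=1}^{n} \|S_{h,k}^{j}\|_{\LB_2^0}^2 \Big)^{1/2}.
\end{align*}
Since $\beta>0$, we have $2\in[2,\tfrac{2}{1-\beta})$, so Lemma~\ref{lemma:Slqbound} applied with $q=2$ bounds this by $C\|A^{(\beta-1)/2}\|_{\LB_2^0}$, again uniformly in $n,h,k$.

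For the drift term $\mathrm{II}_n$, I would use \eqref{as1:Analyt} with $\varrho=0$ to get $\|S_{h,k}^{n-j}\|_{\LB}\lesssim 1$, together with the linear growth of $F$ (from $F\in\Cb^1(H,H)$), so that $\|F(X_{h,k}^j)\|_{L^p(\Omega,H)}\lesssim 1 + \|X_{h,k}^j\|_{L^p(\Omega,H)}$. Setting $\varphi_n := \|X_{h,k}^n\|_{L^p(\Omega,H)}$, combining the three bounds produces
\begin{align*}
  \varphi_n \le C_1 + C_2\, k\sum_{j=0}^{n-1} \varphi_j,\quad n=1,\dots,N,
\end{align*}
with $C_1,C_2$ independent of $h,k$. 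This is precisely the hypothesis of Lemma~\ref{lemma2:Gronwall} with $\mu=\nu=1$ (so that $t_n^{-1+\mu}$ is bounded and $t_{n-j}^{-1+\nu}\equiv 1$), and the conclusion gives $\varphi_n \le C$ uniformly in $n$ and $h,k$, which is the desired estimate.

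There is no real obstacle in this proposition; the only point requiring minor care is the application of Burkholder to the piecewise-constant integrand in $\mathrm{III}_n$ and the verification that Lemma~\ref{lemma:Slqbound} may be invoked with $q=2$, which is automatic as soon as $\beta>0$.
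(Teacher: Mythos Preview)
Your proof is correct and follows essentially the same approach as the paper: split the discrete mild formula into initial, drift, and stochastic convolution terms, bound the first via \eqref{as1:Analyt} with $\varrho=0$, the third via Burkholder plus Lemma~\ref{lemma:Slqbound} with $q=2$, the second via the linear growth of $F$, and close with the discrete Gronwall Lemma~\ref{lemma2:Gronwall}. The only cosmetic difference is that the paper writes the stochastic convolution as a single integral $\int_0^T(\sum_j\chi_{[t_j,t_{j+1})}S_{h,k}^{n-j})\diff{W(t)}$ before invoking Burkholder, which is exactly the piecewise-constant integrand you allude to.
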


\begin{proof}
  For $n \in \{1,\ldots,N\}$ we recall the
  representation \eqref{eq3:SPDEfulldisc} of $X_{h,k}^n$. Hence, it
  follows that
  \begin{align*}
    \big\| X_{h,k}^n \big\|_{L^p(\Omega,H)}
    & \le \big\| S_{h,k}^n P_h X_0 \big\|
     + k \sum_{j = 0}^{n-1}
    \big\| S_{h,k}^{n-j} F(X_{h,k}^j) \big\|_{L^p(\Omega,H)}\\
    &\quad + \Big\| \int_{0}^{T} \Big( \sum_{j = 0}^{n-1}
    \chi_{[t_j,t_{j+1})}(t) S_{h,k}^{n-j} \Big) \diff{W(t)}
    \Big\|_{L^p(\Omega,H)}.
  \end{align*}
  By \eqref{as1:Analyt} with $\varrho=0$ we have
  \begin{align}
    \label{eq3:STildeBdd}
    \sup_{n \in \{1,\ldots,N\}} \big\| S_{{h},k}^n \big\|_{\LB} \lesssim 1,
  \end{align}
  so that $\| S_{h,k}^n P_h X_0 \|\lesssim 1$.  Therefore, by applying
  also Lemma~\ref{lemma2:Burkholder},
  \begin{align*}
    \big\| X_{h,k}^n \big\|_{L^p(\Omega,H)} \lesssim
    1 + k\sum_{j = 0}^{n-1}
    \big\| F(X_{h,k}^j) \big\|_{L^p(\Omega,H)}
    + \Big\|\sum_{j = 0}^{n-1} \chi_{[t_j,t_{j+1})}
    S_{h,k}^{n-j} \Big\|_{L^2([0,T],\LB_2^0)}.
  \end{align*}
  By referring to Lemma~\ref{lemma:Slqbound} with $q=2$, we have
  \begin{align*}
    &\Big\|\sum_{j = 0}^{n-1} \chi_{[t_j,t_{j+1})} S_{h,k}^{n-j}
    \Big\|_{L^2([0,T],\LB_2^0)}^2
    = k \sum_{j = 0}^{n-1}
    \big\| S_{h,k}^{n-j} \big\|_{\LB_2^0}^2
    \le k \sum_{j = 1}^{N}
    \big\| S_{h,k}^{j} \big\|_{\LB_2^0}^2
    \lesssim 1.
  \end{align*}
  Further, since the drift $F \colon H \to H$ satisfies a linear growth bound
  under Assumption~\ref{as1:A} (iv), it follows that
  \begin{align*}
    \big\| X_{h,k}^n \big\|_{L^p(\Omega,H)} \lesssim 1
     + k \sum_{j = 0}^{n-1} \big\| X_{h,k}^j \big\|_{L^p(\Omega,H)}
  \end{align*}
  and the proof is completed by an application of Gronwall's
  Lemma~\ref{lemma2:Gronwall}.
\end{proof}

\begin{proposition}
  \label{lemma3:MalliavinXhk}
  Let Assumptions~\ref{as1:A} and \ref{as1:Scheme} hold with $\beta\in(0,1]$.
  If $\beta\in(0,1)$, then
  \begin{align*}
  \max_{n\in\{1,\dots,N\}}\;\sup_{h,k\in(0,1]}
  \big\|X_{h,k}^n\big\|_{\mathbf{M}^{1,p,q}(H)}<\infty,
  \end{align*}
  for $2\leq p <\infty$ and $2\leq q <\tfrac2{1-\beta}$. If $\beta=1$,
  then the same holds for $2\leq p<\infty$ and $2\leq q\leq\infty$.
\end{proposition}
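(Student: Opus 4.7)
The plan is to mirror the proof of Proposition~\ref{prop3:reg} in the discrete setting, combining the discrete mild representation \eqref{eq3:SPDEfulldisc} with the chain rule, the estimate from Lemma~\ref{lemma:Slqbound}, and a discrete Gronwall argument. The $L^p(\Omega,H)$-bound is already supplied by Proposition~\ref{lemma3:numstab}, so only the Malliavin part is nontrivial.

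First I would establish, by induction on $n$, that $X_{h,k}^n\in\mathbf{M}^{1,p,q}(H)$ for every $n$. The base case $X_{h,k}^0=P_hX_0$ is deterministic with $DX_{h,k}^0=0$. For the step I would apply Lemma~\ref{lemma3:chainrule} to $F\in\Cb^1(H,H)\subset\Cp^1(H,H)$ (the growth parameter is $r=0$) to get $F(X_{h,k}^n)\in\mathbf{M}^{1,p,q}(H)$, and observe that the stochastic increment $\int_{t_n}^{t_{n+1}}S_{h,k}\diff{W(s)}$ has deterministic integrand, hence lies in $\mathbf{M}^{1,p,q}(H)$ with Malliavin derivative $r\mapsto\chi_{[t_n,t_{n+1})}(r)S_{h,k}$.

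Next, iterating and using \eqref{eq3:SPDEfulldisc}, the Malliavin derivative at $r\in[0,T]$ takes the form
\begin{align*}
D_rX_{h,k}^n = k\sum_{j=0}^{n-1}S_{h,k}^{n-j}F'(X_{h,k}^j)\,D_rX_{h,k}^j
+\sum_{j=0}^{n-1}\chi_{[t_j,t_{j+1})}(r)\,S_{h,k}^{n-j}.
\end{align*}
Taking the $L^p(\Omega,L^q([0,T],\LB_2^0))$-norm and applying Minkowski's inequality together with \eqref{eq3:STildeBdd} and the bound $|F|_{\Cb^1}<\infty$ on the first sum, and evaluating the $L^q$-time norm of the piecewise constant deterministic term as
\begin{align*}
\Big\|\sum_{j=0}^{n-1}\chi_{[t_j,t_{j+1})}S_{h,k}^{n-j}\Big\|_{L^q([0,T],\LB_2^0)}
=\Big(k\sum_{j=1}^{n}\|S_{h,k}^{j}\|_{\LB_2^0}^q\Big)^{1/q},
\end{align*}
I invoke Lemma~\ref{lemma:Slqbound} (which requires precisely $q<2/(1-\beta)$ when $\beta<1$, or any $q\in[2,\infty]$ when $\beta=1$, and also uses Assumption~\ref{as1:A}~(iii)) to bound this forcing uniformly in $h,k$. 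Setting $\varphi_n:=\|DX_{h,k}^n\|_{L^p(\Omega,L^q([0,T],\LB_2^0))}$, the resulting recursion reads
\begin{align*}
\varphi_n \le C_1 + C_2\,k\sum_{j=0}^{n-1}\varphi_j,
\end{align*}
from which Lemma~\ref{lemma2:Gronwall} (with $\mu=\nu=1$, no singular kernel) yields $\varphi_n\le C$ uniformly in $n$ and $h,k\in(0,1]$.

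The main technical nuisance, rather than a real obstacle, is to make sure the chain and product rules from Lemmas~\ref{lemma3:chainrule}–\ref{lemma3:prodrule} are actually applicable within $\mathbf{M}^{1,p,q}(H)$ with $q>2$, and that the Malliavin derivative of the Wiener-type increment produces the indicator-times-$S_{h,k}$ expression in the correct space $L^p(\Omega,L^q([0,T],\LB_2^0))$; this is where one exploits that $S_{h,k}$ is deterministic and of finite rank so that $S_{h,k}\in\LB_2^0$ with controllable norm. The summability of $\|S_{h,k}^j\|_{\LB_2^0}^q$ is the only place the regime restriction $q<2/(1-\beta)$ enters, precisely matching the continuous result Proposition~\ref{prop3:reg}.
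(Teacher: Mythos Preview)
Your proposal is correct and follows essentially the same route as the paper: differentiate the discrete mild formula \eqref{eq3:SPDEfulldisc} termwise to obtain the same expression for $D_rX_{h,k}^n$, bound the deterministic forcing via Lemma~\ref{lemma:Slqbound}, control the convolution sum with \eqref{eq3:STildeBdd} and $|F|_{\Cb^1}$, and close with the discrete Gronwall Lemma~\ref{lemma2:Gronwall}. The only addition you make beyond the paper is the explicit inductive verification that each $X_{h,k}^n$ actually lies in $\mathbf{M}^{1,p,q}(H)$, which is a reasonable clarification.
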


\begin{proof}
  We mimic the proof of Proposition~\ref{prop3:reg}. The
  $L^p(\Omega,H)$-norm of $X_{h,k}$ is treated in
  Proposition~\ref{lemma3:numstab} and it remains to bound $DX_{h,k}$.

  By using the chain rule \eqref{Chain1} and
  $D_r\int_{t_j}^{t_{j+1}}S_{{h},k}^{n-j}\diff{W(s)}=\chi_{[t_j,t_{j+1})}(r)
  S_{h,k}^{n-j}$, we apply the Malliavin derivative termwise to
  equation \eqref{eq3:SPDEfulldisc} and obtain
\begin{equation}  \label{eq3:DXhk}
    D_rX_{{h},k}^n
   = k\sum_{j=0}^{n-1}S_{{h},k}^{n-j}F'(X_{{h},k}^j)D_r X_{{h},k}^j
   +\sum_{j=0}^{n-1}
    \chi_{[t_j,t_{j+1})}(r) S_{h,k}^{n-j}.
\end{equation}
Here we note that $D_r X_{h,k}^{j} =0$ for $t_j\le r$, since
$X_{h,k}^j$ is $\F_r$-measurable. Therefore,
\begin{align*}
D_r X_{{h},k}^n
=
\sum_{i = 0}^{n-1}  \chi_{[t_i,t_{i+1})}(r)
\Big(
k \sum_{j = i + 1}^{n-1}
S_{h,k}^{n-j} F'( X_{h,k}^j ) D_rX_{h,k}^j
+
S_{h,k}^{n-i} \Big)
\end{align*}
in full analogy with \eqref{eq3:DX}.  However, as in the proof of
Proposition~\ref{prop3:reg}, it is more convenient to take norms in
\eqref{eq3:DXhk} and use Minkowski's inequality on the convolution
term:
\begin{align*}
&\big\|DX_{h,k}^n \big\|_{L^{p}(\Omega,L^q([0,T],\LB_2^0))}
=\big\|DX_{h,k}^n \big\|_{L^{p}(\Omega,L^q([0,t_n],\LB_2^0))}
\\ & \quad
\leq \Big\|
\sum_{j = 0}^{n-1}
\chi_{[t_j,t_{j+1})} S_{h,k}^{n-j}
\Big\|_{L^q([0,t_n],\LB_2^0)}
\\ & \qquad
+ \Big\|
k\sum_{j=0}^{n-1}S_{{h},k}^{n-j}F'(X_{{h},k}^j)D_r X_{{h},k}^j
\Big\|_{L^{p}(\Omega,L^q([0,t_n],\LB_2^0))}
\\ &  \quad
\le \Big(k\sum_{j = 1 }^{N} \| S_{h,k}^{j}\|_{\LB_2^0}^q\Big)^{1/q}
+  \sup_{1\le j\le N}
\big\| S_{{h},k}^j\big\|_{\LB}
|F|_{\Cb^1}
k \sum_{j = 0 }^{n-1}
\big\|
D X_{{h},k}^j
\big\|_{L^p(\Omega,L^q([0,T],\LB_2^0))}.
\end{align*}
We conclude by using Lemma~\ref{lemma:Slqbound},
\eqref{eq3:STildeBdd}, and  the discrete Gronwall
Lemma~\ref{lemma2:Gronwall}.
\end{proof}

\section{Weak convergence by duality}
\label{sec4}
Let $X$ be the solution to equation \eqref{eq1:SPDEmild} and $X_{h,k}$
be the discretization given by the semi-implicit scheme
\eqref{eq1:SPDEiterat} and take $\varphi\in\mathcal{G}^1(H,\R)$. Our
approach to weak convergence begins with an application of
\eqref{eq2:Taylor2} to get
\begin{align*}
  \E \big[ \varphi(X(t_n))-\varphi(X_{{h},k}^n) \big]
  &= \big\langle\Phi_{h,k}^n,  X(t_n)-X_{{h},k}^n \big\rangle_{L^2(\Omega,H)},
\end{align*}
where
\begin{align}
  \label{Theta}
  \Phi_{h,k}^n =\int_0^1 \varphi'(\Theta_{h,k}^n(\varrho))\diff{\varrho}
  \quad\textrm{and}\quad \Theta_{h,k}^n(\varrho)=\varrho
  X(t_n)+(1-\varrho)X_{h,k}^n,
\end{align}
for $n \in \{1,\ldots,N\}$. This linearization was first proposed in
\cite{KohatsuHiga2} for nonlinear stochastic ordinary differential
equations. They proceed by a duality argument based on an adjoint
equation.

This linearization was used in \cite{kruse2013} for
linear stochastic partial differential equations.  Extending the idea
of \cite{kruse2013}, we proceed as follows: choose a Gelfand triple
$V\subset L^2(\Omega,H)\subset V^*$ such that $\Phi_{h,k}^n\in V$. By
duality we have
\begin{align}
  \label{ineq:dualbound}
  \big|\E \big[ \varphi(X(t_n))-\varphi(X_{{h},k}^n) \big] \big|
  &\leq\Big(\sup_{h,k\in(0,1]}\big\|\Phi_{h,k}^n\big\|_V\Big)
  \big\|X(t_n)-X_{{h},k}^n\big\|_{V^*}.
\end{align}
The proof of our weak convergence result in Theorem~\ref{thm1:main}
then amounts to showing that we can find a suitable space $V$ such
that, for $\gamma\in(0,\beta)$,
\begin{align}
  \label{proc4:wc}
  \begin{split}
    \max_{n \in \{1,\ldots,N\}}
    \sup_{h,k\in(0,1]}\big\|\Phi_{h,k}^n\big\|_V &\le C,\\
    \max_{n \in \{1,\ldots,N\}} \big\| X(t_n)-X_{{h},k}^n \big\|_{V^*} &\leq
    C\big( h^{2\gamma}+k^\gamma\big),\quad h,k\in(0,1].
  \end{split}
\end{align}
In comparison, the strong error converges with half this rate,
i.e., for $\gamma\in(0,\beta)$ there exists $C$ such that
\begin{align*}
\max_{n \in \{1,\ldots,N\}}\|X(t_n)-X_{h,k}^n\|_{L^2(\Omega,H)}
\leq C(h^\gamma+k^\frac\gamma2),\quad h,k\in(0,1].
\end{align*}
In Corollary~\ref{cor4:strong} we deduce this from \eqref{proc4:wc} by
an interpolation argument.

We explain our method by gradually choosing more sophisticated spaces
$V$.  We begin in the next subsection with the simpler problem of the
weak approximation of the stochastic convolution. This problem is
treated in \cite{debussche2009}, \cite{Geissert} \cite{larsson2011},
\cite{larsson2013}, \cite{kruse2013}, and to some extent in
\cite{yan2005}. We show that in this case
$V=L^2(\Omega,\dot{H}^\gamma)$ and $V=\mathbf{M}^{1,p,p}(H)$ with $p =
\frac{2}{1 - \gamma}$ suffice with different degrees of success. The
proofs are simpler than in the mentioned papers, except for
\cite{kruse2013} to which the present paper is an extension. We
continue with a subsection containing our main result
Theorem~\ref{thm1:main}, which is concerned with semilinear equations
with additive noise. Here we use the space $V=\mathbf{G}^{1,p}(H)$,
whose dual norm allows for a Gronwall argument based on
Lemma~\ref{lemma5:Lipschitz}. Finally, we discuss multiplicative noise
in Subsection~\ref{subsec4:3} and illustrate why our approach is not
yet sufficient for this generality.

We assume that test functions are taken from $\Cp^2(H,\R)$ with a
precise formulation in the following assumption. Recall the norm
defined in \eqref{def2:Normb}.

\begin{assumption}
  \label{as1:phi}
  The test function $\varphi\in\Cp^2(H,\R)$ satisfies, for
  some integer $m\geq2$ and constant $C$, the bounds
  \begin{align*}
    \|\varphi^{(j)}(x)\|_{\LB^{[j]}(H,\R)}\leq C\big(1+\|x\|^{m-j}\big),\quad
    x\in H,\ j=1,2.
  \end{align*}
\end{assumption}

\subsection{The stochastic convolution}
\label{subsec4:1}

We consider the stochastic convolution $W^A$ and its approximation
$W_{h,k}^{A_h}$,
\begin{align*}
  W^A(t_n) = \int_0^{t_n} S(t_n-s) \diff{W(s)}\quad \text{and}\quad
  W_{h,k}^{A_h,n} = \sum_{j = 0}^{n - 1} \int_{t_j}^{t_{j+1}} S_{h,k}^{n-j}
  \diff{W(s)}
\end{align*}
for $n \in \{1,\ldots,N\}$. For $\gamma \in (0,\beta)$, we consider
first the Gelfand triple
\begin{align*}
  L^2\big(\Omega,\dot{H}^\gamma\big)\subset L^2(\Omega,H)\subset
  L^2\big(\Omega,\dot{H}^{-\gamma}\big).
\end{align*}
In order to have $\Phi_{h,k}^n \in{L^2(\Omega,\dot{H}^\gamma)}$ we 
impose an extra assumption on $\varphi$, namely that, for some $m\geq1$ and
every $\gamma \in (0, \beta)$, it holds
\begin{align}
  \label{as4:phi}
  \big\|\varphi'(x) \big\|_{\dot{H}^{\gamma}}\leq
  C\Big(1+\|x\|_{\dot{H}^{\gamma}}^{m-1}\Big),\quad x\in \dot{H}^{\gamma}.
\end{align}
Then, by the Sobolev regularity of $W^A$ and $W_{h,k}^{A_h}$, we get
\begin{align*}
  \big\|\Phi_{h,k}^n \big\|_{L^2(\Omega,\dot{H}^{\gamma})} &\lesssim
  \big\|W^A(t_n)\big\|_{L^{2(m-1)}(\Omega,\dot{H}^\gamma)}^{m-1}
  +\big\|W_{h,k}^{A_h,n}\big\|_{L^{2(m-1)}(\Omega,\dot{H}^\gamma)}^{m-1}
  \lesssim 1,
\end{align*}
uniformly in $h,k\in(0,1]$. To prove convergence in
$L^2(\Omega,\dot{H}^{-\gamma})$ we write the difference of the
stochastic convolution and its numerical discretization in the form
\begin{align}
\label{eq4:diffSC}
  W^A(t_n)-W_{h,k}^{A_h,n} = \int_{0}^{t_n} \tilde{E}_{h,k}(t_n-t)
  \diff{W(t)},
\end{align}
where $\tilde{E}_{h,k} \colon (0,T) \to \LB_2^0$ is given by
\begin{align}
  \label{eq4:ErrorOp}
  \tilde{E}_{h,k}(t) := S(t) - S_{h,k}^{j+1}, \quad \text{ for } t \in
  (t_{j}, t_{j+1}),\ j = 0,\ldots,N-1.
\end{align}
Under the additional assumption 
\begin{align}
  \label{eq4:negnormest}
  \big\| A^{-\frac\gamma2} \tilde{E}_{h,k}(t)
  A^\frac{1-\beta}2\big\|_{\LB}
  \lesssim \big(h^{2\gamma}+k^\gamma\big)
  t^{\frac{-1+\beta-\gamma}{2}}, \quad t>0,
\end{align}
which we only impose for this Gelfand triple, we obtain by the 
It\={o} isometry and Assumption~\ref{as1:A} (iii)
\begin{align*}
  &\big\|W^A(t_n)-W_{h,k}^{A_h,n}\big\| _{L^2(\Omega,\dot{H}^{-\gamma})}
  =\Big(\int_0^{t_n} \big\| A^{-\frac\gamma2} \tilde{E}_{h,k}(t_n-t)
  \big\|_{\LB_2^0}^2\diff{t} \Big)^\frac12\\
  &\qquad\leq \Big(\int_0^{t_n} \big\| A^{-\frac\gamma2} \tilde{E}_{h,k}(t_n-t)
  A^\frac{1-\beta}2\big\|_{\LB}^2\,
  \big\|A^{\frac{\beta-1}2}\big\|_{\LB_2^0}^2\diff{t}\Big)^\frac12\\
  &\qquad\lesssim \big(h^{2\gamma}+k^\gamma\big)
  \Big(\int_0^{t_n} (t_n-t)^{-1+\beta-\gamma} \diff{t}\Big)^\frac12\,
  \lesssim \, h^{2\gamma}+k^\gamma.
\end{align*}
Thus, in view of
\eqref{ineq:dualbound}, by assuming \eqref{as4:phi} and
\eqref{eq4:negnormest}, we can prove weak convergence with the desired
rate.

The assumption \eqref{as4:phi} is too restrictive and we
  therefore use this Gelfand triple only to demonstrate our method in
  a simple situation.  The error estimate \eqref{eq4:negnormest} is
  not to be found in the literature; except for a related error
  estimate in \cite{yan2005}, details in \cite{yan-thesis}.  As our
  main result is proved with another Gelfand triple, and without
  \eqref{eq4:negnormest}, we did not attempt to prove this.

Actually, \cite{yan2005}*{Theorem 1.2} shows convergence of order
  $O(h^{2\beta}+k^\beta)$ in $L^2(\Omega,\dot{H}^{-1})$ (except for a
  logarithmic factor). However, the fact that
$L^2(\Omega,\dot{H}^{-1})$-convergence implies weak convergence for
other than linear test functionals was not realized in the early work
\cite{yan2005}.  Subsequent works except \cite{kruse2013} rely
on the use of Kolmogorov's equation. In the paper \cite{Geissert} this
was done for test functions satisfying \eqref{as4:phi}, while
\cite{debussche2009} only assumed 
$\varphi\in\mathcal{C}_{\mathrm{b}}^2(H,\R)$. We also
remark that the only technical ingredient used in the present proof is
the It\={o} isometry. Therefore this proof carries over without
additional difficulties to the case when the cylindrical $Q$-Wiener
process $W$ is replaced by a square integrable martingale $M$, by just
introducing the suitable notation. This gives a partial extension of
the results in \cite{LindnerSchilling}, in which impulsive noise was
considered. In that paper the additional assumption \eqref{as4:phi}
was not used but instead the test functions were assumed to be in
$\mathcal{C}_{\mathrm{b}}^2(H,\R)$.

Fix $\gamma\in(0,\beta)$ and let $p=\tfrac2{1-\gamma}$. We next
consider the Gelfand triple
\begin{align*}
  \mathbf{M}^{1,p,p}(H)\subset L^2(\Omega,H)\subset \mathbf{M}^{1,p,p}(H)^*.
\end{align*}
With these spaces we need no assumption on the test function other
than Assumption~\ref{as1:phi} and we do not use
  \eqref{eq4:negnormest}. We state the two parts of \eqref{proc4:wc}
as two separate lemmas. Notice that the first lemma is not restricted
to the stochastic convolution.

\begin{lemma}
  \label{lemma4:Phi}
  Let Assumptions~\ref{as1:A}, \ref{as1:Scheme}, and \ref{as1:phi}
  hold with $\beta\in(0,1]$.  For $\gamma\in(0,\beta)$, set
  $p=\tfrac2{1-\gamma}$. Then it holds
  \begin{align*}
    \max_{n \in \{1,\ldots,N\}} \sup_{h,k \in (0,1]}
    \big\|\Phi_{h,k}^n \big\|_{\mathbf{M}^{1,p,p}(H)} < \infty,
  \end{align*}
  where $\Phi_{h,k}^n$ is defined in \eqref{Theta}.
\end{lemma}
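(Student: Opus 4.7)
The plan is to apply the chain rule of Lemma~\ref{lemma3:chainrule} to the mapping $\varphi' \colon H \to H$ (identifying $\varphi'(x)$ with its Riesz representative), then bound the convex combination $\Theta_{h,k}^n(\varrho)$ uniformly using the regularity results of Propositions~\ref{prop3:reg} and \ref{lemma3:MalliavinXhk}.

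First, since $\mathbf{M}^{1,p,p}(H)$ is a Banach space and $\varrho\mapsto \varphi'(\Theta_{h,k}^n(\varrho))$ will be shown to lie in it for each $\varrho$, by Minkowski's inequality
\begin{align*}
  \big\|\Phi_{h,k}^n\big\|_{\mathbf{M}^{1,p,p}(H)}
  \le \int_0^1 \big\|\varphi'(\Theta_{h,k}^n(\varrho))\big\|_{\mathbf{M}^{1,p,p}(H)}\diff{\varrho}.
\end{align*}
Under Assumption~\ref{as1:phi} the mapping $\varphi'\in \Cp^1(H,H)$ satisfies the hypotheses of Lemma~\ref{lemma3:chainrule} with $r=m-2$, namely $\|\varphi'(x)\|_H\le C(1+\|x\|^{m-1})$ and $\|\varphi''(x)\|_{\LB(H,H)}\le C(1+\|x\|^{m-2})$. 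Lemma~\ref{lemma3:chainrule} then yields, for any $\Theta\in \mathbf{M}^{1,(m-1)p,p}(H)$,
\begin{align*}
  \big\|\varphi'(\Theta)\big\|_{\mathbf{M}^{1,p,p}(H)}
  \lesssim 1 + \big\|\Theta\big\|_{\mathbf{M}^{1,(m-1)p,p}(H)}^{m-1}.
\end{align*}

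Second, by the triangle inequality and the fact that $\varrho,1-\varrho\in[0,1]$,
\begin{align*}
  \big\|\Theta_{h,k}^n(\varrho)\big\|_{\mathbf{M}^{1,(m-1)p,p}(H)}
  \le \big\|X(t_n)\big\|_{\mathbf{M}^{1,(m-1)p,p}(H)}
  + \big\|X_{h,k}^n\big\|_{\mathbf{M}^{1,(m-1)p,p}(H)}.
\end{align*}
The choice $p=\tfrac{2}{1-\gamma}$ with $\gamma\in(0,\beta)$ gives $p<\tfrac{2}{1-\beta}$, so the second index lies in the allowed range of Proposition~\ref{prop3:reg}, which bounds $\sup_{t\in[0,T]}\|X(t)\|_{\mathbf{M}^{1,(m-1)p,p}(H)}$, and of Proposition~\ref{lemma3:MalliavinXhk}, which bounds $\|X_{h,k}^n\|_{\mathbf{M}^{1,(m-1)p,p}(H)}$ uniformly in $n$ and $h,k\in(0,1]$. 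Inserting these bounds yields the claim.

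The main obstacle is merely the bookkeeping of indices: one must check that the integrability exponent $(m-1)p$ in the first slot poses no difficulty (every finite exponent is admissible in the cited regularity propositions), while the second slot $q=p$ fits into the range $q<\tfrac{2}{1-\beta}$ precisely because $\gamma<\beta$. The uniformity of the bounds in $h,k\in(0,1]$ and $n\in\{1,\dots,N\}$ is built into Propositions~\ref{prop3:reg} and \ref{lemma3:MalliavinXhk}, so no further estimate is required.
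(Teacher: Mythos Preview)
Your proposal is correct and follows essentially the same approach as the paper: apply the chain rule Lemma~\ref{lemma3:chainrule} to $\varphi'$ with $r=m-2$, then invoke Propositions~\ref{prop3:reg} and \ref{lemma3:MalliavinXhk} to bound $\Theta_{h,k}^n(\varrho)$ in $\mathbf{M}^{1,(m-1)p,p}(H)$ uniformly. The only cosmetic difference is that you first pull the norm inside the $\varrho$-integral via Minkowski, whereas the paper takes the supremum over $\varrho$; both lead to the same estimate.
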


\begin{proof}
  First note that $\varphi'$ satisfies the condition of the chain rule
  in Lemma~\ref{lemma3:chainrule} with $r = m-2$ by
  Assumption~\ref{as1:phi}. Thus, it holds
  \begin{align*}
    \Phi_{h,k}^n = \int_0^1\varphi'(\Theta_{h,k}^n(\varrho)) \diff{\varrho}
    \in \mathbf{M}^{1,p,p}(H),
  \end{align*}
  since $\Theta_{h,k}^n(\varrho) = \varrho X(t_n) +
  (1-\varrho)X_{h,k}^n \in \mathbf{M}^{1,(m-1)p,p}(H)$ by
  Propositions~\ref{prop3:reg} and \ref{lemma3:MalliavinXhk}. Further,
  from Lemma~\ref{lemma3:chainrule} we also get
  \begin{align*}
    \big\|\Phi_{h,k}^n \big\|_{\mathbf{M}^{1,p,p}(H)} &\lesssim \big(1 +
    \sup_{\varrho \in [0,1]} \big\|\Theta_{h,k}^n
    \big\|_{\mathbf{M}^{1,(m-1)p,p}(H)}^{m-1} \big)\\
    &\lesssim
    \big(1 + \big\|X(t_n) \big\|_{\mathbf{M}^{1,(m-1)p,p}(H)}^{m-1}
    + \big\|X_{h,k}^n \big\|_{\mathbf{M}^{1,(m-1)p,p}(H)}^{m-1} \big).
  \end{align*}
  By Propositions~\ref{prop3:reg} and \ref{lemma3:MalliavinXhk}, these
  are bounded independently of $h,k \in (0,1]$.
\end{proof}

\begin{lemma}
  \label{lemma4:stochconv}
  Let Assumptions~\ref{as1:A} and \ref{as1:Scheme} hold with $\beta\in(0,1]$.
  For $\gamma\in(0,\beta)$, set $p=\tfrac2{1-\gamma}$. It holds
  \begin{align*}
    \max_{n \in \{1,\ldots,N\}}
    \big\|W^A(t_n)-W_{h,k}^{A_h,n}\big\|_{\mathbf{M}^{1,p,p}(H)^\ast} \leq
    C\big(h^{2\gamma}+k^{\gamma}\big),\quad h,k\in(0,1].
  \end{align*}
\end{lemma}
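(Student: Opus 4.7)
The plan is to express the error as a single stochastic integral by \eqref{eq4:diffSC}, so that
\[
W^A(t_n)-W_{h,k}^{A_h,n}=\int_0^{t_n}\tilde E_{h,k}(t_n-t)\diff{W(t)},
\]
with $\tilde E_{h,k}$ given by \eqref{eq4:ErrorOp}. Then I would apply Theorem~\ref{lemma2:dual3} with $p=q=\tfrac{2}{1-\gamma}$, whose conjugate exponents are $p'=q'=\tfrac{2}{1+\gamma}$. Since the integrand is deterministic, this reduces the task to showing
\[
\Big(\int_0^{t_n}\|\tilde E_{h,k}(t_n-t)\|_{\LB_2^0}^{q'}\diff{t}\Big)^{1/q'}\lesssim h^{2\gamma}+k^\gamma.
\]

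Next I would extract the Hilbert--Schmidt norm using Assumption~\ref{as1:A} (iii): factoring $A^{\frac{1-\beta}{2}}$ out gives
\[
\|\tilde E_{h,k}(t)\|_{\LB_2^0}\le \|\tilde E_{h,k}(t)A^{\frac{1-\beta}{2}}\|_{\LB}\,\|A^{\frac{\beta-1}{2}}\|_{\LB_2^0},
\]
so it remains to estimate the operator norm. For $t\in(t_j,t_{j+1})$ I would split
\[
\tilde E_{h,k}(t)=\bigl(S(t)-S(t_{j+1})\bigr)+E_{h,k}^{j+1}
=(I-S(t_{j+1}-t))A^{-\gamma}\cdot A^{\gamma}S(t)+E_{h,k}^{j+1}.
\]
Using \eqref{as1:SAnalytic2} with $\varrho=\gamma$ and \eqref{as1:SAnalytic} with $\varrho=\gamma+\tfrac{1-\beta}{2}$, the first summand is bounded by $C k^\gamma t^{-(2\gamma+1-\beta)/2}$. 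For the second summand I would invoke \eqref{as1:ErrorOneStep} with $\theta=2\gamma$ and $\varrho=1-\beta$ (valid since $2\gamma<2\beta\le 2$ and $1-\beta\le 2-2\gamma$) to get $\|E_{h,k}^{j+1}A^{\frac{1-\beta}{2}}\|_{\LB}\lesssim (h^{2\gamma}+k^\gamma)t_{j+1}^{-(2\gamma+1-\beta)/2}$. Since $t\le t_{j+1}$ makes $t^{-(2\gamma+1-\beta)/2}$ the larger factor, the two terms combine to
\[
\|\tilde E_{h,k}(t)A^{\frac{1-\beta}{2}}\|_{\LB}\lesssim (h^{2\gamma}+k^\gamma)\,t^{-(2\gamma+1-\beta)/2},\quad t\in(0,t_n).
\]

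Finally I would insert this into the $L^{q'}$-integral in time; the exponent to check is
\[
q'\cdot\frac{2\gamma+1-\beta}{2}=\frac{2\gamma+1-\beta}{1+\gamma}=1-\frac{\beta-\gamma}{1+\gamma}<1,
\]
which is exactly the computation \eqref{calc3:exponent} already used in Proposition~\ref{lemma4:tempreg}, so the singular integral over $[0,t_n]\subset[0,T]$ is finite uniformly in $n$ and yields a constant independent of $h,k$. Combining everything gives the desired rate $h^{2\gamma}+k^\gamma$. The main potential obstacle is ensuring the admissibility of the parameters in \eqref{as1:ErrorOneStep} and \eqref{as1:SAnalytic2} across the full range $\gamma\in(0,\beta)$, $\beta\in(0,1]$, and confirming that the time integral converges, but both reduce to straightforward inequalities; no further analytic difficulty is anticipated.
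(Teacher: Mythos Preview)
Your proposal is correct and follows essentially the same approach as the paper: express the error via \eqref{eq4:diffSC}, apply Theorem~\ref{lemma2:dual3} with $p=q=\tfrac{2}{1-\gamma}$, factor out $A^{\frac{\beta-1}{2}}$ via Assumption~\ref{as1:A}\,(iii), split $\tilde E_{h,k}$ into a semigroup increment handled by \eqref{as1:SAnalytic}--\eqref{as1:SAnalytic2} and the discrete error $E_{h,k}^{j+1}$ handled by \eqref{as1:ErrorOneStep} with $\theta=2\gamma$, $\varrho=1-\beta$, and conclude with the integrability check \eqref{calc3:exponent}. The only cosmetic difference is that the paper parameterizes the estimate in terms of $\tilde E_{h,k}(t_n-t)$ for $t\in(t_j,t_{j+1})$ rather than $\tilde E_{h,k}(s)$ for $s\in(t_j,t_{j+1})$, which amounts to a change of variable in the final integral.
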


\begin{proof}
  By \eqref{eq4:diffSC}, Theorem~\ref{lemma2:dual3}, and
  Assumption~\ref{as1:A} (iii), we get
\begin{equation*}
  \begin{split}
    \big\| W^A(t_n) - W_{h,k}^{A_h,n} \big\|_{\mathbf{M}^{1,p,p}(H)^*} &\leq
    \Big(\int_0^{t_n} \big\| \tilde{E}_{h,k}(t_n -t) \big\|_{\LB_2^0}^{p'}
    \diff{t}\Big)^\frac1{p'}\\
    &\leq \Big(\int_0^{t_n} \big\| \tilde{E}_{h,k}(t_n - t)
    A^{\frac{1-\beta}{2}} \big\|_{\LB}^{p'}
    \big\|A^{\frac{\beta-1}2}\big\|_{\LB_2^0}^{p'}
    \diff{t}\Big)^\frac1{p'}.
  \end{split}
\end{equation*}
Recalling the error operator \eqref{def1:ErrorOp} we obtain for $t \in
(t_{j}, t_{j+1})$, $j = 0,\ldots,n-1$,
\begin{align}
  \label{eq4:ErrorOneStep}
  \begin{split}
    &\big\| \tilde{E}_{h,k}(t_n-t) A^{\frac{1-\beta}{2}} \big\|_{\LB} \\
    &\quad \le \big\| \big( S(t_n-t) - S(t_n - t_j) \big) A^{\frac{1-\beta}{2}}
    \big\|_{\LB} + \big\| E_{h,k}^{n - j} A^{\frac{1-\beta}{2}}
    \big\|_{\LB} \\
    &\quad \le \big\| \big( I -  S(t-t_j) \big) A^{-\gamma}\big\|_{\LB}
    \big\| S(t_n - t) A^{\frac{2 \gamma + 1-\beta}{2}} \big\|_{\LB} +
    \big\| E_{h,k}^{n - j} A^{\frac{1-\beta}{2}} \big\|_{\LB}\\
    &\quad
\lesssim
 (t-t_j)^{\gamma}
 (t_n - t)^{-\frac{2\gamma + 1 - \beta}{2}}
+
\big( h^{2\gamma} + k^{\gamma} \big)
(t_n - t_j)^{-\frac{2\gamma + 1 - \beta}{2}}
\\ & \quad \lesssim
\big( h^{2\gamma} + k^{\gamma} \big)
(t_n - t)^{-\frac{2\gamma + 1 - \beta}{2}} ,
  \end{split}
\end{align}
where we applied \eqref{as1:SAnalytic} with $\varrho=\gamma$ and
\eqref{as1:SAnalytic2}, \eqref{as1:ErrorOneStep} with
$\theta=2\gamma$, $\varrho=1-\beta$. By recalling
\eqref{calc3:exponent}, we conclude
\begin{align*}
  \begin{split}
    \big\| W^A(t_n)-W_{h,k}^{A_h,n}
    \big\|_{\mathbf{M}^{1,p,p}(H)^*}
    &\lesssim \big( h^{2 \gamma} + k^{\gamma} \big)
    \Big( \int_{0}^{t_n} (t_n - t)^{-p' \frac{2\gamma + 1 - \beta}{2}} \diff{t}
    \Big)^{\frac{1}{p'}}\\
    &\lesssim\, h^{2\gamma}+k^{\gamma},
  \end{split}
\end{align*}
which is the desired result.
\end{proof}

\subsection{Semilinear equation with additive noise}
\label{subsec4:2}
Above we demonstrated that $V=\mathbf{M}^{1,p,p}(H)$ with $p$ large is
suitable for the weak error analysis for the stochastic
convolution. In order to treat semilinear equations we need a smaller
space. Here we work with the Gelfand triple
\begin{align*}
\mathbf{G}^{1,p}(H)\subset L^2(\Omega,H)\subset \mathbf{G}^{1,p}(H)^*.
\end{align*}
The line of proof is the same as above only that the convergence in
the dual norm is more involved and relies on the local Lipschitz
condition stated in Lemma~\ref{lemma5:Lipschitz}, the Burkholder type
inequality Lemma~\ref{lemma2:dual3} and a classical Gronwall argument.
\begin{theorem}
  \label{thm1:main}
  Let Assumptions~\ref{as1:A} and \ref{as1:Scheme} hold with $\beta
  \in (0,1]$.  Let $X$ and $X_{h,k}$ be the solutions to equations
  \eqref{eq1:SPDEmild} and \eqref{eq1:SPDEiterat}, respectively. For
  every function $\varphi \colon H \to H$ that satisfies
  Assumption~\ref{as1:phi} and every $\gamma\in[0,\beta)$, we have for
  $h,k\in(0,1]$ the weak convergence
  \begin{align*}
    &\max_{n \in \{1, \ldots,N\}}
    \big|\E\big[\varphi(X(t_n))-\varphi(X_{h,k}^n)\big]\big|\leq
    C\big(h^{2\gamma}+k^\gamma\big).
\end{align*}
\end{theorem}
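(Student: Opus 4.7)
My plan is to start from the Taylor linearization
\begin{equation*}
  \E\big[\varphi(X(t_n))-\varphi(X_{h,k}^n)\big] = \big\langle \Phi_{h,k}^n,\, X(t_n)-X_{h,k}^n \big\rangle_{L^2(\Omega,H)}
\end{equation*}
and apply duality with the Gelfand triple $\mathbf{G}^{1,p}(H)\subset L^2(\Omega,H)\subset \mathbf{G}^{1,p}(H)^\ast$, taking $p=\tfrac{2}{1-\gamma}$ as in the linear case. It then suffices to prove (i) $\sup_{h,k,n}\|\Phi_{h,k}^n\|_{\mathbf{G}^{1,p}(H)}<\infty$ and (ii) $\max_n\|X(t_n)-X_{h,k}^n\|_{\mathbf{G}^{1,p}(H)^\ast}\lesssim h^{2\gamma}+k^\gamma$. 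Step~(i) follows the pattern of Lemma~\ref{lemma4:Phi}: the chain rule Lemma~\ref{lemma3:chainrule} together with Assumption~\ref{as1:phi} controls $\|\Phi_{h,k}^n\|_{\mathbf{M}^{1,p,p}(H)}$ by the $\mathbf{M}^{1,(m-1)p,p}(H)$-norms of $X(t_n)$ and $X_{h,k}^n$, which are uniformly bounded by Propositions~\ref{prop3:reg} and \ref{lemma3:MalliavinXhk}; the additional $L^{2p}(\Omega,H)$-control needed for $\mathbf{G}^{1,p}(H)$ uses the polynomial growth of $\varphi'$ together with the moment bounds \eqref{ineq2:moment} and Proposition~\ref{lemma3:numstab}.

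For step~(ii), subtracting \eqref{eq3:SPDEfulldisc} from \eqref{eq1:SPDEmild} decomposes $X(t_n)-X_{h,k}^n$ into an initial value error $E_{h,k}^n X_0$, a drift error, and a noise error $W^A(t_n)-W_{h,k}^{A_h,n}$. The initial value contribution is deterministic and is bounded via $\|\cdot\|_{\mathbf{G}^{1,p}(H)^\ast}\le\|\cdot\|_{L^2(\Omega,H)}$ and \eqref{as1:ErrorOneStep} with $\theta=2\gamma$, $\varrho=-2\gamma$, using $X_0\in\dot H^{2\beta}\subset\dot H^{2\gamma}$. The noise term is handled by Lemma~\ref{lemma4:stochconv} together with the embedding $\mathbf{M}^{1,p,p}(H)^\ast\hookrightarrow \mathbf{G}^{1,p}(H)^\ast$. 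The drift error I split further into a consistency piece $\sum_{j}\int_{t_j}^{t_{j+1}}\tilde E_{h,k}(t_n-s)F(X(s))\diff{s}$, estimated in $L^2(\Omega,H)$ exactly as in \eqref{eq4:ErrorOneStep} using \eqref{as1:SAnalytic}, \eqref{as1:SAnalytic2}, \eqref{as1:ErrorOneStep}, the moment bound \eqref{ineq2:moment}, and the linear growth of $F\in\Cb^1(H,H)$; and a Lipschitz piece $\sum_j\int_{t_j}^{t_{j+1}}S_{h,k}^{n-j}\bigl[F(X(s))-F(X_{h,k}^j)\bigr]\diff{s}$.

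The Lipschitz piece is the main obstacle and the reason $\mathbf{G}^{1,p}(H)$ was introduced in Subsection~\ref{subsec3:2}. Since Assumption~\ref{as1:A}(iv) only provides $F\in\Cb^2(H,\dot H^{-1})$, I would apply Lemma~\ref{lemma5:Lipschitz} with target space $V=\dot H^{-1}$ to obtain
\begin{equation*}
  \big\|F(X(s))-F(X_{h,k}^j)\big\|_{\mathbf{G}^{1,p}(\dot H^{-1})^\ast}\lesssim \big\|X(s)-X_{h,k}^j\big\|_{\mathbf{G}^{1,p}(H)^\ast},
\end{equation*}
with constant uniform in $h,k,s,j$ thanks to Propositions~\ref{prop3:reg} and \ref{lemma3:MalliavinXhk}. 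The non-random smoothing operator $S_{h,k}^{n-j}\colon\dot H^{-1}\to H$, whose norm is $\lesssim t_{n-j}^{-1/2}$ by \eqref{as1:Analyt} and \eqref{as1:AhPhA} both with $\varrho=\tfrac12$, is transferred to the dual spaces via Lemma~\ref{lemma3:ideal}, yielding $\bigl\|S_{h,k}^{n-j}[F(X(s))-F(X_{h,k}^j)]\bigr\|_{\mathbf{G}^{1,p}(H)^\ast}\lesssim t_{n-j}^{-1/2}\|X(s)-X_{h,k}^j\|_{\mathbf{G}^{1,p}(H)^\ast}$. Writing $\varphi_n:=\|X(t_n)-X_{h,k}^n\|_{\mathbf{G}^{1,p}(H)^\ast}$, splitting $X(s)-X_{h,k}^j=[X(s)-X(t_j)]+[X(t_j)-X_{h,k}^j]$, and invoking Proposition~\ref{lemma4:tempreg} to bound the first summand by $Ck^\gamma$ (noting $\mathbf{M}^{1,p,p}(H)^\ast\hookrightarrow \mathbf{G}^{1,p}(H)^\ast$) leads to
\begin{equation*}
  \varphi_n \lesssim h^{2\gamma}+k^\gamma + k\sum_{j=0}^{n-1}t_{n-j}^{-1/2}\varphi_j,
\end{equation*}
and the discrete Gronwall Lemma~\ref{lemma2:Gronwall} (with $\mu=1$, $\nu=\tfrac12$) closes the argument. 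The principal difficulty is precisely the bookkeeping around $F\in\Cb^2(H,\dot H^{-1})$: one must lift the local Lipschitz estimate to the $\dot H^{-1}$-valued dual space and pay for this with a $t_{n-j}^{-1/2}$-type singularity from the smoothing of $S_{h,k}^{n-j}$, while keeping it integrable so that Gronwall applies.
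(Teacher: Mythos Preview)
Your proposal is correct and follows essentially the same route as the paper's proof, which reduces the theorem to Lemmas~\ref{lemma4:linearization} and \ref{lemma3:Strong2}: your step~(i) reproduces Lemma~\ref{lemma4:linearization} verbatim, and your step~(ii), including the four-term decomposition, the use of Lemma~\ref{lemma5:Lipschitz} with $V=\dot H^{-1}$, the transfer of $S_{h,k}^{n-j}\in\LB(\dot H^{-1},H)$ to the dual spaces via Lemma~\ref{lemma3:ideal} with the $t_{n-j}^{-1/2}$ singularity, Proposition~\ref{lemma4:tempreg} for the time increment, and the final discrete Gronwall argument, matches the proof of Lemma~\ref{lemma3:Strong2} line by line.
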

\begin{proof}
  This is a direct consequence of \eqref{proc4:wc} and
  Lemmas~\ref{lemma4:linearization} and \ref{lemma3:Strong2} below.
\end{proof}

\begin{lemma}
  \label{lemma4:linearization}
  Let the assumptions of Theorem~\ref{thm1:main} hold. For
  $\gamma\in(0,\beta)$, set $p=\frac2{1-\gamma}$. It holds
  \begin{align*}
  \max_{n \in \{1,\ldots,N\}} \sup_{h,k \in (0,1]}
  \big\|\Phi_{h,k}^n\big\|_{\mathbf{G}^{1,p}(H)}\leq C.
  \end{align*}
\end{lemma}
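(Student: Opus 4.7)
The definition $\|Y\|_{\mathbf{G}^{1,p}(H)}=\max(\|Y\|_{\mathbf{M}^{1,p,p}(H)},\|Y\|_{L^{2p}(\Omega,H)})$ splits the proof into two independent bounds, and the plan is to handle the two pieces separately and then take the maximum.

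For the $\mathbf{M}^{1,p,p}(H)$-component, I would simply invoke Lemma~\ref{lemma4:Phi}: the argument there uses only Assumption~\ref{as1:phi}, the chain rule from Lemma~\ref{lemma3:chainrule} applied to $\varphi'$ with growth exponent $r=m-2$, and the uniform Malliavin-Sobolev bounds on $X(t_n)$ and $X_{h,k}^n$ coming from Propositions~\ref{prop3:reg} and \ref{lemma3:MalliavinXhk}. Since $\Theta_{h,k}^n(\varrho)$ is a convex combination, Propositions~\ref{prop3:reg} and \ref{lemma3:MalliavinXhk} give $\sup_{\varrho}\|\Theta_{h,k}^n(\varrho)\|_{\mathbf{M}^{1,(m-1)p,p}(H)}\lesssim1$ uniformly in $n,h,k$, so the chain rule and Minkowski's inequality in $\varrho$ yield the bound for free.

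For the $L^{2p}(\Omega,H)$-component, I would pull the integral over $\varrho$ out by Minkowski, giving
\begin{align*}
\|\Phi_{h,k}^n\|_{L^{2p}(\Omega,H)}\leq\int_0^1\|\varphi'(\Theta_{h,k}^n(\varrho))\|_{L^{2p}(\Omega,H)}\diff{\varrho},
\end{align*}
and then apply the polynomial growth bound from Assumption~\ref{as1:phi} with $j=1$ to obtain
\begin{align*}
\|\varphi'(\Theta_{h,k}^n(\varrho))\|_{L^{2p}(\Omega,H)}\lesssim 1+\|\Theta_{h,k}^n(\varrho)\|_{L^{2p(m-1)}(\Omega,H)}^{m-1}.
\end{align*}
By convexity, $\|\Theta_{h,k}^n(\varrho)\|_{L^{2p(m-1)}(\Omega,H)}\leq\|X(t_n)\|_{L^{2p(m-1)}(\Omega,H)}+\|X_{h,k}^n\|_{L^{2p(m-1)}(\Omega,H)}$, and the two terms on the right are uniformly bounded by the moment estimate \eqref{ineq2:moment} and by Proposition~\ref{lemma3:numstab}, respectively, both of which hold for every exponent in $[2,\infty)$. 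Integrating over $\varrho\in[0,1]$ gives a constant independent of $n,h,k$.

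This is essentially a routine bookkeeping argument; the only thing worth being a little careful about is that the moment exponent needed, $2p(m-1)$, is finite, so that one indeed lies in the regime where \eqref{ineq2:moment} and Proposition~\ref{lemma3:numstab} apply. Since both of those estimates hold for arbitrary $p\in[2,\infty)$, there is no real obstacle; the hardest part of the argument has already been discharged in Lemma~\ref{lemma4:Phi} via the Malliavin chain rule and the uniform regularity of $X_{h,k}^n$ from Section~\ref{sec3}.
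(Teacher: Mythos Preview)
Your proposal is correct and matches the paper's own proof essentially line for line: the paper also invokes Lemma~\ref{lemma4:Phi} for the $\mathbf{M}^{1,p,p}(H)$-part and then cites \eqref{ineq2:moment}, Proposition~\ref{lemma3:numstab}, and Assumption~\ref{as1:phi} for the $L^{2p}(\Omega,H)$-part. You have simply spelled out the Minkowski and polynomial-growth steps that the paper leaves implicit.
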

\begin{proof}
  By Lemma~\ref{lemma4:Phi} we have
  $\|\Phi_{h,k}^n\|_{\mathbf{M}^{1,p,p}(H)}\leq C$ uniformly in $n$
  and $h,k$.  In addition, by \eqref{ineq2:moment},
  Proposition~\ref{lemma3:numstab}, and Assumption~\ref{as1:phi}, it
  holds $\|\Phi_{h,k}^n\|_{L^{2p}(\Omega,H)}\leq C$ uniformly in $n$
  and $h,k$.
\end{proof}

\begin{lemma}
  \label{lemma3:Strong2}
  Let the assumptions of Theorem~\ref{thm1:main} hold. For
  $\gamma\in(0,\beta)$, set $p=\frac2{1-\gamma}$. Then there exists a
  constant $C$ independent of $h, k \in (0,1]$ such that
  \begin{align*}
    &\max_{n \in \{1,\ldots,N\}}
    \big\|X(t_n)-X_{h,k}^n \big\|_{\mathbf{G}^{1,p}(H)^\ast} \le
    C \big(h^{2\gamma}+k^{\gamma}\big),\quad
    h,k\in(0,1].
  \end{align*}
\end{lemma}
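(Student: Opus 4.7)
The plan is to split the error via the mild formulas \eqref{eq1:SPDEmild} and \eqref{eq3:SPDEfulldisc} into $X(t_n)-X_{h,k}^n=I_1+I_2+I_3$, where
\begin{align*}
I_1 &= E_{h,k}^n X_0,\\
I_2 &= \int_0^{t_n} S(t_n-s)F(X(s))\diff{s}-k\sum_{j=0}^{n-1}S_{h,k}^{n-j}F(X_{h,k}^j),\\
I_3 &= W^A(t_n)-W_{h,k}^{A_h,n},
\end{align*}
estimate each piece in the $\mathbf{G}^{1,p}(H)^\ast$-norm, and close the resulting inequality by the discrete Gronwall Lemma~\ref{lemma2:Gronwall}. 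Since $\|\cdot\|_{\mathbf{G}^{1,p}(H)^\ast}\lesssim\|\cdot\|_{L^2(\Omega,H)}$, and $I_1$ is deterministic, applying \eqref{as1:ErrorOneStep} with $\theta=2\gamma$, $\varrho=-2\gamma$ and Assumption~\ref{as1:A} (ii) gives $\|I_1\|_{\mathbf{G}^{1,p}(H)^\ast}\lesssim (h^{2\gamma}+k^\gamma)\|X_0\|_{\dot{H}^{2\beta}}$. For $I_3$ the continuous embedding $\mathbf{G}^{1,p}(H)\subset\mathbf{M}^{1,p,p}(H)$ (the former norm dominates the latter) dualizes to $\mathbf{M}^{1,p,p}(H)^\ast\subset\mathbf{G}^{1,p}(H)^\ast$, so Lemma~\ref{lemma4:stochconv} yields $\|I_3\|_{\mathbf{G}^{1,p}(H)^\ast}\lesssim h^{2\gamma}+k^\gamma$ for free.

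The essential work is in $I_2$, which I break into $J_1+J_2+J_3$ with
\begin{align*}
J_1 &= \sum_{j=0}^{n-1}\int_{t_j}^{t_{j+1}}S(t_n-s)\big[F(X(s))-F(X(t_j))\big]\diff{s},\\
J_2 &= \sum_{j=0}^{n-1}\int_{t_j}^{t_{j+1}}\big[S(t_n-s)-S_{h,k}^{n-j}\big]F(X(t_j))\diff{s},\\
J_3 &= k\sum_{j=0}^{n-1}S_{h,k}^{n-j}\big[F(X(t_j))-F(X_{h,k}^j)\big].
\end{align*}
For $J_1$ and $J_3$ I invoke Lemma~\ref{lemma3:ideal} on the non-random operators $S(t_n-s)$ and $S_{h,k}^{n-j}$, which by \eqref{as1:SAnalytic} and by \eqref{as1:Analyt}, \eqref{as1:AhPhA} (both with $\varrho=\tfrac12$) act between the dual Sobolev--Malliavin spaces with norms $(t_n-s)^{-1/2}$ and $(t_n-t_j)^{-1/2}$, respectively. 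Combined with the local Lipschitz bound of Lemma~\ref{lemma5:Lipschitz} applied to $F\in\Cb^2(H,\dot{H}^{-1})$ (whose constant is uniformly bounded thanks to Propositions~\ref{prop3:reg} and \ref{lemma3:MalliavinXhk}), this gives $\|F(X(s))-F(X(t_j))\|_{\mathbf{G}^{1,p}(\dot{H}^{-1})^\ast}\lesssim \|X(s)-X(t_j)\|_{\mathbf{G}^{1,p}(H)^\ast}$, which via Proposition~\ref{lemma4:tempreg} and the embedding above is bounded by $C(s-t_j)^\gamma\leq Ck^\gamma$; integrating the integrable kernel $(t_n-s)^{-1/2}$ gives $\|J_1\|_{\mathbf{G}^{1,p}(H)^\ast}\lesssim k^\gamma$. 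The term $J_2$ is treated in $L^2(\Omega,H)$ by the uniform $L^2$-bound on $F(X(t_j))$ and the split $S(t_n-s)-S_{h,k}^{n-j}=[S(t_n-s)-S(t_n-t_j)]+E_{h,k}^{n-j}$; the two differences are controlled by $C(t_n-s)^{-\gamma}(s-t_j)^\gamma$ (combining \eqref{as1:SAnalytic}, \eqref{as1:SAnalytic2}) and by $C(h^{2\gamma}+k^\gamma)(t_n-t_j)^{-\gamma}$ (from \eqref{as1:ErrorOneStep} with $\theta=2\gamma$, $\varrho=0$), and both sum to $C(h^{2\gamma}+k^\gamma)$.

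Collecting the bounds for $I_1,I_2=J_1+J_2+J_3$ and $I_3$, and applying Lemma~\ref{lemma5:Lipschitz} once more in $J_3$, produces the singular integral inequality
\begin{align*}
\big\|X(t_n)-X_{h,k}^n\big\|_{\mathbf{G}^{1,p}(H)^\ast}
\leq C(h^{2\gamma}+k^\gamma)
+Ck\sum_{j=0}^{n-1}(t_n-t_j)^{-1/2}\big\|X(t_j)-X_{h,k}^j\big\|_{\mathbf{G}^{1,p}(H)^\ast},
\end{align*}
and Lemma~\ref{lemma2:Gronwall} with $\mu=1$, $\nu=\tfrac12$ closes the argument. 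The principal obstacle is that Lemma~\ref{lemma5:Lipschitz} only provides Lipschitz continuity of $F$ between $\mathbf{G}^{1,p}(H)^\ast$ and the weaker $\mathbf{G}^{1,p}(\dot{H}^{-1})^\ast$ (since $F$ is merely $\Cb^2$ into $\dot{H}^{-1}$); one must therefore carefully pair this with exactly the right amount of smoothing from $S$ and $S_{h,k}$ so that the resulting half-order integrable singularities neither destroy the Gronwall kernel nor the optimal rate $h^{2\gamma}+k^\gamma$.
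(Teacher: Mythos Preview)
Your proof is correct and follows essentially the same strategy as the paper: decompose via the mild formulations, handle the stochastic convolution by Lemma~\ref{lemma4:stochconv}, control the drift via the dual local Lipschitz bound of Lemma~\ref{lemma5:Lipschitz} together with the smoothing of Lemma~\ref{lemma3:ideal}, invoke the temporal H\"older regularity of Proposition~\ref{lemma4:tempreg}, and close with the discrete Gronwall Lemma~\ref{lemma2:Gronwall}. The only difference is the ordering in the drift decomposition: the paper writes the drift error as $\int_0^{t_n}\tilde E_{h,k}(t_n-t)F(X(t))\diff t + \sum_j\int_{t_j}^{t_{j+1}} S_{h,k}^{n-j}\big(F(X(t))-F(X_{h,k}^j)\big)\diff t$, so that both the time-continuity piece $F(X(t))-F(X(t_j))$ and the Gronwall piece $F(X(t_j))-F(X_{h,k}^j)$ sit under the \emph{discrete} smoothing operator $S_{h,k}^{n-j}$; you instead place the time-continuity piece under the \emph{continuous} semigroup $S(t_n-s)$ (your $J_1$) and keep $F(X(t_j))$ in the semigroup-error term (your $J_2$). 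Both arrangements produce exactly the same $t_{n-j}^{-1/2}$ kernel and the same final Gronwall inequality, so this is a cosmetic reordering rather than a genuinely different route.
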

\begin{proof}
  Let $n \in \{1,\ldots,N\}$ be arbitrary. By \eqref{eq1:SPDEmild} and
  \eqref{eq3:SPDEfulldisc}, we can write
  \begin{align*}
    \begin{split}
      &X(t_n)-X_{{h},k}^n= \big(S(t_n)-S_{h,k}^n \big)X_0\\
      &\quad+ \sum_{j = 0}^{n - 1} \int_{t_{j}}^{t_{j+1}}
      \big(S(t_n-t)- S_{{h},k}^{n-j} \big) F(X(t)) \diff{t}\\
      &\quad+ \sum_{j = 0}^{n - 1} \int_{t_j}^{t_{j+1}}
      S_{{h},k}^{n-j}  \big(F(X(t))-F( X_{{h},k}^j) \big) \diff{t}
      + W^A(t_n)-W_{h,k}^{A_h,n}.
    \end{split}
  \end{align*}
  By recalling the error operators ${E}_{h,k}^n$ from
  \eqref{def1:ErrorOp} and $\tilde{E}_{h,k}(t)$ from
  \eqref{eq4:ErrorOp}, we obtain
  \begin{align}
    \label{calc5:StrongError}
    \begin{split}
      &\big\|X(t_n)-X_{{h},k}^n\big\|_{\mathbf{G}^{1,p}(H)^\ast}\leq
      \big\| E_{h,k}^n X_0\big\|\\
      &\quad+\Big\|\int_0^{t_n}
      \tilde{E}_{{h},k}(t_n-t)F(X(t))\diff{t}\Big\|_{\mathbf{G}^{1,p}(H)^\ast}\\
      &\quad+\Big\| \sum_{j = 0}^{n - 1} \int_{t_j}^{t_{j+1}} S_{{h},k}^{n-j}
      \big(F(X(t))-F( X_{{h},k}^j )\big) \diff{t}
      \Big\|_{\mathbf{G}^{1,p}(H)^\ast}\\
      & \quad + \big\|W^A(t_n)-W_{h,k}^{A_h,n}\big\|_{\mathbf{G}^{1,p}(H)^\ast}.
    \end{split}
  \end{align}
  By \eqref{as1:ErrorOneStep} with $\varrho=-\theta=-2\gamma$ and
  Assumption~\ref{as1:A} (ii) we get
  \begin{align*}
    \big\| E_{h,k}^n  X_0\big\| \le
    \big\|E_{h,k}^n A^{-\gamma} \big\|_{\LB}
    \big\|A^\gamma X_0\big\|\lesssim \big(h^{2\gamma}+k^{\gamma}\big)
    \big\|A^\gamma X_0\big\|.
  \end{align*}
  For the second term in \eqref{calc5:StrongError} we first use that $\| Z
  \|_{\mathbf{G}^{1,p}(H)^\ast} \le \| Z \|_{L^2(\Omega,H)}$ for all $Z \in
  L^2(\Omega,H)$. Then by \eqref{eq4:ErrorOneStep} with $\beta = 1$, the linear
  growth of $F$, and \eqref{ineq2:moment} we have
  \begin{align*}
    &\Big\|\int_0^{t_n}
    \tilde{E}_{h,k}(t_n-t)F(X(t))\diff{t}\Big\|_{\mathbf{G}^{1,p}(H)^\ast}\leq
    \int_0^{t_n}
    \big\|\tilde{E}_{h,k}(t_n-t)\big\|_{\LB}\,\big\|F(X(t))\big\|_{L^2(\Omega,H)}
    \diff{t}\\
    &\qquad\lesssim \big(h^{2\gamma}+k^{\gamma}\big)
    \int_0^{t_n}(t_n-t)^{-\gamma}\diff{t}\,
    \Big(1+\sup_{t\in[0,T]}\big\|X(t)\big\|_{L^2(\Omega,H)}\Big)
    \lesssim h^{2\gamma}+k^{\gamma}.
  \end{align*}
  For the third summand we first notice that
  Propositions~\ref{prop3:reg} and \ref{lemma3:MalliavinXhk} justify
  the use of Lemma~\ref{lemma5:Lipschitz} with $\eta=F$, $U=H$,
  $V=\dot{H}^{-1}$, $X_1=X(t)$ and $X_2=X_{h,k}^j$ with $t \in
  (t_j,t_{j+1}]$. We get
  \begin{align*}
    \begin{split}
    &\big\| F(X(t))-F( X_{h,k}^j)
    \big\|_{\mathbf{G}^{1,p}(\dot{H}^{-1})^\ast} \le \max_{i \in \{1,2\}}
    |F|_{\Cb^i(H,\dot{H}^{-1})}\\
    &\qquad\times\Big(1 + \| X(t) \|_{\mathbf{M}^{1,2p,p}(H)} + \| X_{h,k}^j
    \|_{\mathbf{M}^{1,2p,p}(H)} \Big) \| X(t) - X_{h,k}^j
    \|_{\mathbf{G}^{1,p}(H)^\ast}\\
    &\quad \lesssim \| X(t) - X_{h,k}^j \|_{\mathbf{G}^{1,p}(H)^\ast}.
    \end{split}
  \end{align*}
  By \eqref{as1:Analyt}, \eqref{as1:AhPhA} with $\rho = \frac{1}{2}$, and 
  Lemma~\ref{lemma3:ideal},
  we get for the third term
  \begin{align}
    \label{eq5:intT3}
    \begin{split}
      &\Big\| \sum_{j = 0}^{n - 1} \int_{t_j}^{t_{j+1}} S_{{h},k}^{n-j}
      A_h^{\frac12}A_h^{-\frac12}P_hA^{\frac12}A^{-\frac12}\big(F(X(t))-F(
      X_{h,k}^{j})\big)\diff{t}
      \Big\|_{\mathbf{G}^{1,p}(H)^\ast}\\
      &\quad \le \sum_{j = 0}^{n-1} \int_{t_j}^{t_{j+1}}
      \big\|
      S_{{h},k}^{n-j}A_h^{\frac12}\big\|_{\LB}
      \|A_h^{-\frac12}P_hA^{\frac12}\|_{\LB} \big\| F(X(t))-F(
      X_{h,k}^{j}) \big\|_{\mathbf{G}^{1,p}(\dot{H}^{-1})^\ast} \diff{t}\\
      &\quad \lesssim \sum_{j = 0}^{n-1} \int_{t_j}^{t_{j+1}}
      t_{n-j}^{-\frac12}  \big(
      \big\| X(t)-X(t_j) \big\|_{\mathbf{G}^{1,p}(H)^\ast}
      +\big\| X(t_j)-X_{h,k}^j\big\|_{\mathbf{G}^{1,p}(H)^\ast} \big)
      \diff{t}.
    \end{split}
  \end{align}
  By Proposition~\ref{lemma4:tempreg}, it holds
  $\|X(t)-X(t_j)\|_{\mathbf{G}^{1,p}(H)^\ast}\lesssim k^\gamma$ and
  therefore
  \begin{align*}
    \begin{split}
      &\Big\| \sum_{j = 0}^{n - 1} \int_{t_j}^{t_{j+1}} S_{{h},k}^{n-j}
      \big(F(X(t))-F( X_{h,k}^{j})\big)\diff{t}
      \Big\|_{\mathbf{G}^{1,p}(H)^\ast}\\
      &\qquad \lesssim k^{1+\gamma}\sum_{j = 0}^{n-1}t_{n-j}^{-\frac12}
      +k\sum_{j = 0}^{n-1} t_{n-j}^{-\frac12}\big\|
      X(t_j)-X_{h,k}^j\big\|_{\mathbf{G}^{1,p}(H)^\ast}.
    \end{split}
  \end{align*}
  The fourth summand is estimated in
  Lemma~\ref{lemma4:stochconv}. Altogether we conclude that
  \begin{align*}
    \big\| X(t_n)-X_{{h},k}^n \big\|_{\mathbf{G}^{1,p}(H)^\ast} \lesssim
    \big( h^{2\gamma}+k^{\gamma} \big) +k\sum_{j=0}^{n-1}t_{n-j}^{-\frac12}
    \big\|X(t_j)-X_{{h},k}^j\big\|_{\mathbf{G}^{1,p}(H)^\ast}.
  \end{align*}
  By the discrete Gronwall Lemma~\ref{lemma2:Gronwall} the assertion follows.
\end{proof}

Weak approximation concerns the approximation of the Markov semigroup.
In view of Theorem~\ref{thm1:main} and Corollary~\ref{cor:Markov}, we see
that the rate of weak convergence in time coincides with the H\"{o}lder
regularity in time for the Markov semigroup, which is intuitively to be
expected for an Euler approximation. A similar
connection to the discretization in space seems to be a more subtle issue.

The relationship between the strong and weak rate of convergence can
also be seen in the view of duality. The following corollary deduces a
strong convergence result from Lemma~\ref{lemma3:Strong2} and
Propositions~\ref{prop3:reg} and \ref{lemma3:MalliavinXhk}. It
indicates why one often encounters the rule of thumb that the order of
weak convergence is twice the order of strong convergence.

\begin{corollary}
  \label{cor4:strong}
  Let the assumptions of Theorem~\ref{thm1:main} hold. Let $X$ and
  $X_{h,k}$ denote the solutions to equations \eqref{eq1:SPDEmild} and
  \eqref{eq1:SPDEiterat}, respectively. Then for every
  $\gamma\in(0,\beta)$ there exists a constant $C$ such that
  \begin{align*}
    \max_{n \in \{1,\ldots,N\}}\|X(t_n)-X_{h,k}^n\|_{L^2(\Omega,H)}
    \leq C(h^\gamma+k^\frac\gamma2),\quad h,k\in(0,1].
  \end{align*}
\end{corollary}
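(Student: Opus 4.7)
The plan is to deduce the strong bound from the dual-norm bound by an interpolation (or simply Cauchy-Schwarz in the Gelfand triple) argument. Concretely, using the Gelfand triple $\mathbf{G}^{1,p}(H)\subset L^2(\Omega,H)\subset \mathbf{G}^{1,p}(H)^*$, for any $Z\in \mathbf{G}^{1,p}(H)$ we have the identity
\begin{align*}
\|Z\|_{L^2(\Omega,H)}^2 = \langle Z,Z\rangle_{L^2(\Omega,H)} \leq \|Z\|_{\mathbf{G}^{1,p}(H)}\,\|Z\|_{\mathbf{G}^{1,p}(H)^*},
\end{align*}
which follows directly from the definition of the dual norm in \eqref{eq2:dual} (adapted to $\mathbf{G}^{1,p}$).

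Next I would apply this with $Z=X(t_n)-X_{h,k}^n$ and $p=\tfrac{2}{1-\gamma}$. The dual-norm factor is controlled by Lemma~\ref{lemma3:Strong2}, which gives
\begin{align*}
\|X(t_n)-X_{h,k}^n\|_{\mathbf{G}^{1,p}(H)^*} \lesssim h^{2\gamma}+k^{\gamma}.
\end{align*}
For the $\mathbf{G}^{1,p}(H)$-factor I would use the triangle inequality and bound the two terms separately. Since $p<\tfrac{2}{1-\beta}$, Proposition~\ref{prop3:reg} gives $\sup_n\|X(t_n)\|_{\mathbf{M}^{1,p,p}(H)}<\infty$ and Proposition~\ref{lemma3:MalliavinXhk} gives $\sup_{n,h,k}\|X_{h,k}^n\|_{\mathbf{M}^{1,p,p}(H)}<\infty$; the $L^{2p}(\Omega,H)$-part of the $\mathbf{G}^{1,p}(H)$-norm is handled by \eqref{ineq2:moment} and Proposition~\ref{lemma3:numstab}. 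Hence $\sup_{n,h,k}\|X(t_n)-X_{h,k}^n\|_{\mathbf{G}^{1,p}(H)}\le C$.

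Combining these two bounds and taking square roots, using $\sqrt{a+b}\le \sqrt{a}+\sqrt{b}$ for $a,b\ge 0$,
\begin{align*}
\|X(t_n)-X_{h,k}^n\|_{L^2(\Omega,H)} \lesssim (h^{2\gamma}+k^{\gamma})^{1/2} \le h^{\gamma}+k^{\gamma/2},
\end{align*}
uniformly in $n\in\{1,\ldots,N\}$ and $h,k\in(0,1]$, which is the claim. There is no real obstacle: the only point requiring attention is verifying that the choice $p=\tfrac{2}{1-\gamma}$ is compatible with the ranges of $p,q$ in Propositions~\ref{prop3:reg} and \ref{lemma3:MalliavinXhk}, which holds precisely because $\gamma<\beta$ forces $p<\tfrac{2}{1-\beta}$.
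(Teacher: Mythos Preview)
Your proof is correct and follows essentially the same approach as the paper: both use the Gelfand triple inequality $\|Z\|_{L^2(\Omega,H)}^2 \le \|Z\|_{\mathbf{G}^{1,p}(H)}\|Z\|_{\mathbf{G}^{1,p}(H)^*}$ with $p=\tfrac{2}{1-\gamma}$, bound the dual-norm factor by Lemma~\ref{lemma3:Strong2}, bound the $\mathbf{G}^{1,p}$-factor via the regularity results, and finish with $(h^{2\gamma}+k^\gamma)^{1/2}\le h^\gamma+k^{\gamma/2}$. You are in fact slightly more careful than the paper in explicitly citing \eqref{ineq2:moment} and Proposition~\ref{lemma3:numstab} for the $L^{2p}$-part of the $\mathbf{G}^{1,p}$-norm, whereas the paper only cites Propositions~\ref{prop3:reg} and~\ref{lemma3:MalliavinXhk}.
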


\begin{proof}
  For arbitrary $n \in \{1,\ldots,N\}$ we have by the duality argument
  with $p = \frac{2}{1 - \gamma}$
  \begin{align*}
    &\|X(t_n)-X_{h,k}^n\|_{L^2(\Omega,H)}^2 = \big\langle X(t_n)-X_{h,k}^n,
    X(t_n)-X_{h,k}^n \big\rangle_{L^2(\Omega,H)}\\
    &\quad \le \big( \| X(t_n) \|_{\mathbf{G}^{1,p}(H)} + \| X_{h,k}^n
    \|_{\mathbf{G}^{1,p}(H)} \big) \|
    X(t_n)-X_{h,k}^n \|_{\mathbf{G}^{1,p}(H)^\ast}.
  \end{align*}
  The first factor is bounded independently of $n\in \{1,\ldots,N\}$,
  by Propositions~\ref{prop3:reg} and \ref{lemma3:MalliavinXhk}. For
  the second factor we apply Lemma~\ref{lemma3:Strong2} and since
  $(h^{2\gamma} + k^\gamma)^{\frac{1}{2}} \le
  (h^\gamma+k^\frac\gamma2)$ for all $h,k \in (0,1]$ the result
  follows.
\end{proof}

\subsection{Multiplicative noise}
\label{subsec4:3}
The choice $V=\mathbf{G}^{1,p}(H)$ of Subsection~\ref{subsec4:2} works
only for equations with additive noise.  We demonstrate this here by
considering the following equation with linear multiplicative noise
\begin{align*}
  \diff{X}(t)+AX(t)\diff{t}
  =B X(t)\diff{W(t)},\; t\in(0,T];\quad
  X(0)=X_0.
\end{align*}
Here $B\in \LB(H,\LB_2(H_0,\dot{H}^{\beta-1}))$. In order to perform
the Gronwall argument in the $\mathbf{G}^{1,p}(H)^\ast$-norm for this
equation, one would need a bound
\begin{equation}
\label{eq:mult_bound}
\begin{split}
      &\Big\| \sum_{j = 0}^{n - 1} \int_{t_j}^{t_{j+1}} S_{{h},k}^{n-j}
      B\big(X(t)- X_{h,k}^{j}\big)\diff{W(t)}
      \Big\|_{\mathbf{G}^{1,p}(H)^\ast}\\
      &\qquad\lesssim \sum_{j = 0}^{n-1} \int_{t_j}^{t_{j+1}} \big\|
      X(t)-X_{h,k}^j
      \big\|_{\mathbf{G}^{1,p}(H)^\ast} \diff{t},
\end{split}
\end{equation}
cf.~\eqref{eq5:intT3}. Attempting to prove this, we integrate by parts
and move the supremum inside the integral to get
\begin{equation*}
  \begin{split}
    &\Big\| \sum_{j = 0}^{n - 1} \int_{t_j}^{t_{j+1}} S_{{h},k}^{n-j}
    B\big(X(t)- X_{h,k}^{j}\big)\diff{W(t)}
    \Big\|_{\mathbf{G}^{1,p}(H)^\ast}\\
    & \quad =\sup_{Z\in
      \mathbf{G}^{1,p}(H)}\frac1{\|Z\|_{\mathbf{G}^{1,p}(H)}}
\Big\langle Z,
\sum_{j = 0}^{n - 1}
    \int_{t_j}^{t_{j+1}} S_{{h},k}^{n-j}
    B\big(X(t)- X_{h,k}^{j}\big)\diff{W(t)}\Big\rangle_{L^2(\Omega,H)}\\
    & \quad \leq \sum_{j = 0}^{n - 1}\int_{t_j}^{t_{j+1}}\sup_{Z\in
      \mathbf{G}^{1,p}(H)}\frac1{\|Z\|_{\mathbf{G}^{1,p}(H)}}\big\langle
    B^*S_{h,k}^{n-j}D_tZ, X(t)-X_{h,k}^{j} 
    \big\rangle_{L^2(\Omega,H)}\diff{t}.
  \end{split}
\end{equation*}
If it would hold $B^*S_{h,k}^{n-j}D_t\in\LB(\mathbf{G}^{1,p}(H))$, then
the bound \eqref{eq:mult_bound} would follow, but this is
not the case as only $D_t\colon \mathbf{G}^{1,p}(H)\to
L^p(\Omega,\LB_2^0)$ for {a.e.} $t\in[0,T]$. We see no other natural
choice of the space $V$ but it might be that the estimate
\eqref{ineq:dualbound} is too crude in order to treat multiplicative
noise.

\section{Approximation by the finite element method}
\label{sec5}
In this section we describe an explicit example for the linear
operator $A$ and its corresponding numerical discretization by the
finite element method.

For this we consider the Hilbert space $H=L^2(D)$, where
$D\subset\R^d$, $d=1,2,3$, is a bounded, convex, and polygonal
domain. The linear operator $(A,\D(A))$ is defined to be
$Au=-\nabla\cdot(a\nabla u)+cu$ with Dirichlet boundary conditions,
where $a, c \colon D \to \R$ are sufficiently smooth with $c(\xi) \ge
0$ and $a(\xi) \ge a_0 > 0$ for $\xi\in D$. Then $A$ is an elliptic,
selfadjoint, second order differential operator with compact inverse,
see for instance \cite{evans}. In particular, $A$
satisfies Assumption \ref{as1:A} (i).

We measure spatial regularity in terms of the abstract spaces
$\dot{H}^\theta$, $\theta \in \R$, which now are related to the
classical Sobolev spaces, for example $\dot{H}^1 = H_0^1(D)$ and
$\dot{H}^2 = H_0^1(D) \cap H^2(D)$. For more details we refer to
\cite{kruse2013}*{App.~B.2} and the references therein.

Let $(T_h)_{h\in(0,1]}$ be a regular family of triangulations of $D$
with maximal mesh size $h\in(0,1]$. We define a family of subspaces
$(V_h)_{h\in(0,1]}$ of $\dot{H}^1$, consisting of continuous piecewise
linear functions corresponding to $(T_h)_{h\in(0,1]}$. By equipping
the space $\dot{H}^1$ with the inner product
$\langle\cdot,\cdot\rangle_1:=\langle
A^\frac12\cdot,A^\frac12\cdot\rangle$, we define $A_h\colon
V_h\rightarrow V_h$, $h \in (0,1]$, to be the linear operators given
by
\begin{align*}
  \langle A_h v_h,u_h\rangle=\langle v_h,u_h\rangle_1,\quad\forall v_h,u_h\in
  V_h.
\end{align*}
Now, from \cite{kruse2013}*{(3.15)} we get $\| A_h^{-1} P_h x \| \le \|
x\|_{-1}$ for all $x \in \dot{H}^{-1}$. Hence, it holds
\begin{align*}
  \| A_h^{-\frac{1}{2}} P_h A^{\frac{1}{2}} \|_{\LB} \le 1.
\end{align*}
An interpolation between this and $\| P_h \|_{\LB} \le 1$ yields
\eqref{as1:AhPhA} for $\varrho \in [0, 1]$.

As in Subsection~\ref{subsec2:3} we denote by $(S(t))_{t\geq0}$ the
semigroup generated by $-A$ and $S_{h,k} := (I + k A_h)^{-1} P_h$. The
standard literature on finite element methods, for instance
\cite{thomee2006}, provides error estimates for the approximation of
the semigroup with smooth and nonsmooth initial data. More precisely,
it holds for the error operator \eqref{eq4:ErrorOp} that
\begin{align*}
  \| \tilde{E}_{h,k}(t) x\| \leq C
  \big(h^2+k\big)t^{-\frac{2-q}2} \|x\|_{\dot{H}^q}, \quad x\in\dot{H}^q,\
  q=0,2.
\end{align*}
By interpolation this covers the smooth data case
$-\theta\le\varrho\le0$ of \eqref{as1:ErrorOneStep}. For the purpose
of the present work we need to extend this to less regular initial
data. This is done by the next lemma, which is a consequence of
\cite{kruse2013}*{Lemma~3.12}.

\begin{lemma} \label{lemma:femett} Under the above assumptions and for
  $0\leq\theta\leq 2$ and
  $\stig{-\theta}\leq\varrho\leq\min(1,2-\theta)$, the following
  estimate holds true
  \begin{align*}
    \|\tilde{E}_{h,k}(t)x\|&\leq
    C\big(h^{\theta}+k^{\frac\theta2}\big)t^{-\frac{\theta+\varrho}2}
    \|x\|_{-\varrho},\quad x\in\dot{H}^{-\varrho},\ t>0,\ h,k\in(0,1].
  \end{align*}
\end{lemma}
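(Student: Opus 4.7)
The plan is to cover the pentagonal parameter region
$\mathcal{R} = \{(\theta,\varrho) : 0\leq \theta\leq 2,\ -\theta\leq \varrho\leq \min(1,2-\theta)\}$,
whose extreme points are $(0,0)$, $(0,1)$, $(1,1)$, $(2,0)$, $(2,-2)$, by viewing $\tilde{E}_{h,k}(t)$ as a bounded operator $\dot{H}^{-\varrho}\to H$ and interpolating on the Hilbert scale $\{\dot{H}^r\}_{r\in\R}$, for which $[\dot{H}^{s_0},\dot{H}^{s_1}]_\lambda=\dot{H}^{(1-\lambda)s_0+\lambda s_1}$.

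First I would collect endpoint estimates at the five vertices. Three come for free. At $(0,0)$ one has uniform stability $\|\tilde{E}_{h,k}(t)\|_{\LB}\le\|S(t)\|_{\LB}+\|S_{h,k}^{j+1}\|_{\LB}\le C$ from \eqref{as1:SAnalytic} and \eqref{as1:Analyt} both with $\varrho=0$. At $(2,0)$ and $(2,-2)$ the classical smooth-data estimate $\|\tilde{E}_{h,k}(t)x\|\le C(h^2+k)t^{-(2-q)/2}\|x\|_{\dot{H}^q}$, quoted just above the lemma, supplies $q=0$ and $q=2$ respectively. At $(0,1)$ one needs a stability bound in the $\dot{H}^{-1}$-norm, $\|\tilde{E}_{h,k}(t)x\|\le Ct^{-1/2}\|x\|_{-1}$, and at $(1,1)$ a first-order convergence estimate in the same norm, $\|\tilde{E}_{h,k}(t)x\|\le C(h+k^{1/2})t^{-1}\|x\|_{-1}$; these negative-norm estimates are exactly the content of \cite{kruse2013}*{Lemma~3.12}.

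Having the five corner estimates, any $(\theta,\varrho)\in\mathcal{R}$ lies on a segment connecting two of the vertices, say $(\theta_0,\varrho_0)$ and $(\theta_1,\varrho_1)$, with convex weight $\lambda\in[0,1]$. Complex interpolation on the Hilbert scale then yields
$\|\tilde{E}_{h,k}(t)\|_{\LB(\dot{H}^{-\varrho},H)}\le M_0^{1-\lambda}M_1^\lambda$,
where $M_i=(h^{\theta_i}+k^{\theta_i/2})t^{-(\theta_i+\varrho_i)/2}$. The temporal factor combines cleanly to $t^{-(\theta+\varrho)/2}$ since $\theta=(1-\lambda)\theta_0+\lambda\theta_1$ and $\varrho=(1-\lambda)\varrho_0+\lambda\varrho_1$.

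The only nontrivial bookkeeping is the spatial factor, for which I need $(h^{\theta_0}+k^{\theta_0/2})^{1-\lambda}(h^{\theta_1}+k^{\theta_1/2})^\lambda\lesssim h^\theta+k^{\theta/2}$. Expanding via the subadditivity inequality $(a+b)^\alpha\le a^\alpha+b^\alpha$ for $\alpha\in[0,1]$ leaves four terms, two of which already equal $h^\theta$ and $k^{\theta/2}$; the mixed terms $h^{\theta_0(1-\lambda)}k^{\theta_1\lambda/2}$ and $h^{\theta_1\lambda}k^{\theta_0(1-\lambda)/2}$ are bounded by $h^\theta+k^{\theta/2}$ via Young's inequality with exponents $p=\theta/(\theta_0(1-\lambda))$ and $q=\theta/(\theta_1\lambda)$ (which verify $1/p+1/q=1$), with the degenerate cases $\theta_i=0$ or $\lambda\in\{0,1\}$ handled directly. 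The main obstacle is providing the two negative-norm endpoints at $(0,1)$ and $(1,1)$; everything else is standard. Since these estimates are the explicit object of \cite{kruse2013}*{Lemma~3.12}, the present lemma reduces to a straightforward interpolation argument on top of that reference.
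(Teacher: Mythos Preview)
Your strategy is sound in spirit but contains a geometric slip: the claim that ``any $(\theta,\varrho)\in\mathcal{R}$ lies on a segment connecting two of the vertices'' is false for a pentagon. For instance, $(\theta,\varrho)=(1.5,0.3)$ belongs to $\mathcal{R}$ but lies on none of the ten segments joining pairs of your five vertices (the only such segments meeting $\theta=1.5$ hit $\varrho\in\{-1.5,-1.25,-0.5,0,0.25,0.5\}$). A single interpolation step from vertex data therefore misses most of the interior. The fix is to iterate: first interpolate along edges and diagonals, then interpolate a second time between points on those segments. Your bookkeeping for the spatial factor $(h^{\theta_0}+k^{\theta_0/2})^{1-\lambda}(h^{\theta_1}+k^{\theta_1/2})^\lambda\lesssim h^\theta+k^{\theta/2}$ is correct and is needed regardless.

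The paper's proof is essentially this iterated scheme, but it economizes by invoking \cite{kruse2013}*{Lemma~3.12} for whole edges rather than just the two vertices you cite: part~(i) already gives the entire segment $\varrho=0$, part~(ii) the segment $\theta=0$, and part~(iii) the segment $\theta+\varrho=2$ for $\theta\in[1,2]$. One complex interpolation in $\varrho$ between (i) and (iii) at fixed $\theta\in[1,2]$ fills that triangle; for $\theta\in[0,1]$ the paper uses not operator interpolation but the elementary identity $\|\tilde{E}_{h,k}x\|=\|\tilde{E}_{h,k}x\|^\lambda\|\tilde{E}_{h,k}x\|^{1-\lambda}$, bounding the first factor by the $\theta=1$ estimate and the second by the $\theta=0$ estimate, both with the same $\|x\|_{-\varrho}$ on the right. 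The smooth-data region $\varrho\le 0$ is dispatched beforehand by interpolating the two classical estimates you already identified.
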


\begin{proof} As noted above it remains to treat the case when
    $0\leq\varrho\leq\min(1,2-\theta)$.  By
  \cite{kruse2013}*{Lemma~3.12 (i)} the estimate
\begin{align}
  \label{ineq5:interp1}
  \|\tilde{E}_{h,k}(t)x\|\leq
  C\big(h^{\theta}+k^{\frac\theta2}\big)t^{-\frac\theta2}\|x\|,\quad t>0,\
  0\leq\theta\leq2,
\end{align}
holds for all $h,k \in (0,1]$. By \cite{kruse2013}*{Lemma~3.12 (iii)} the error
operator $\tilde{E}_{h,k}$ also satisfies, for $1 \le \theta \le 2$,
\begin{align}
  \label{ineq5:interp2}
  \|\Tilde{E}_{h,k}(t)x\|\leq
  C\big(h^{\theta}+k^{\frac\theta2}\big)t^{-1}\|x\|_{-(2-\theta)},\quad t>0.
\end{align}
Interpolation of \eqref{ineq5:interp1} and \eqref{ineq5:interp2} with fixed
$\theta \in[1,2]$ gives that, for $\lambda \in [0,1]$,
\begin{align*}
  \|\Tilde{E}_{h,k}(t)x\|&\leq
  C\big(h^{\theta}+k^{\frac\theta2}\big)t^{-(1-\lambda)\frac\theta2}t^{-\lambda}
  \|x\|_{-\lambda(2-\theta)}\\
  &= C\big(h^{\theta}+k^{\frac\theta2}\big)
  t^{-\frac\theta2-\frac{\lambda(2-\theta)}2} \|x\|_{-\lambda(2-\theta)},\quad
  t>0.
\end{align*}
If we let $\varrho =\lambda(2-\theta)$, then we get the following
estimate: for $1\leq\theta\leq2$ and $0\leq\varrho\leq 2-\theta$,
\begin{align}
  \label{ineq5:interp3}
  \|\Tilde{E}_{h,k}(t)x\|&\leq
  C\big(h^{\theta}+k^{\frac\theta2}\big)t^{-\frac{\theta+\varrho}2}
  \|x\|_{{-\varrho}},\quad t\geq0.
\end{align}
By \cite{kruse2013}*{Lemma~3.12 (ii)} it holds
\begin{align}
  \label{ineq5:interp4}
  \|\Tilde{E}_{h,k}(t)x\|\leq Ct^{-\frac\varrho2}\|x\|_{{-\varrho}},\quad t>0,\
  0\leq\varrho\leq1,
\end{align}
and using \eqref{ineq5:interp3} with $\theta=1$ and \eqref{ineq5:interp4}, both
with the same $0\leq\varrho\leq1$, yields
\begin{equation}
\begin{split}
\label{ineq5:interp5}
\|\Tilde{E}_{h,k}(t)x\|&=\|\Tilde{E}_{h,k}(t)x\|^\lambda
\|\Tilde{E}_{h,k}(t)x\|^{1-\lambda}\leq C\big(h+k^{\frac12}\big)^\lambda
t^{-\frac{\lambda+\varrho}2}\|x\|_{{-\varrho}}\\
&\leq C\big(h^{\lambda}+k^{\frac\lambda2}\big)t^{-\frac{\lambda+\varrho}2}
\|x\|_{{-\varrho}},\quad t>0,\ 0\leq\lambda\leq1.
\end{split}
\end{equation}
Combining \eqref{ineq5:interp3} and \eqref{ineq5:interp5} concludes the proof.
\end{proof}

Writing the statement of the lemma in operator form yields
\begin{align*}
  \|\tilde{E}_{h,k}(t)A^\frac\varrho2\|_{\LB}&\leq
  C\big(h^{\theta}+k^{\frac\theta2}\big)t^{-\frac{\theta+\varrho}2},\quad t>0,\
  0\leq\theta\leq 2,\  \stig{-\theta}\leq \varrho\leq \min(1,2-\theta).
\end{align*}
This is \eqref{as1:ErrorOneStep} for the finite element
method. To verify Assumption~\ref{as1:Scheme} it remains to show
\eqref{as1:Analyt}. By \cite{kruse2013}*{(3.42)}
\begin{align*}
  \| S_{h,k}^n x\|\leq C t^{-\frac12} \|x\|_{-1}.
\end{align*}
Interpolating between this and $\| S_{h,k}^n x\| \leq C\|x\|$ yields
\eqref{as1:Analyt}.

\subsection*{Acknowledgement}
The authors wish to thank M.~Kov\'{a}cs for fruitful discussions
during the preparation of the work \cite{AnderssonKovacsLarsson},
which led to improvements of the present paper.  We also thank
A.~Lang and X.~Wang for valuable comments on an earlier version of the
manuscript \adam{and A.~Jentzen for making us aware of a reference}.

The first two authors also acknowledge the kind support by W.-J.~Beyn,
B.~Gentz, and the DFG-funded CRC 701 'Spectral Structures and
Topological Methods in Mathematics' by making possible an inspiring
research stay at Bielefeld University, where part of this work was
written.

\def\cprime{$'$} \def\polhk#1{\setbox0=\hbox{#1}{\ooalign{\hidewidth
  \lower1.5ex\hbox{`}\hidewidth\crcr\unhbox0}}}
\begin{bibdiv}
\begin{biblist}

\bib{AnderssonKovacsLarsson}{unpublished}{
      author={Andersson, A.},
      author={Kov\'{a}cs, M.},
      author={Larsson, S.},
       title={Weak error analysis for semilinear stochastic {V}olterra
  equations with additive noise},
        note={Preprint 2014, arXiv: 1411.6476},
}

\bib{AnderssonLarsson}{article}{
      author={Andersson, A.},
      author={Larsson, S.},
       title={Weak convergence for a spatial approximation of the nonlinear
  stochastic heat equation},
     note={Preprint 2012, arXiv:1212.5564.  To appear in Math.\ Comp.},
}

\bib{BSVIE}{article}{
      author={Anh, V.~V.},
      author={Grecksch, W.},
      author={Yong, J.},
       title={Regularity of backward stochastic {V}olterra integral equations
  in {H}ilbert spaces},
        date={2011},
        ISSN={0736-2994},
     journal={Stoch. Anal. Appl.},
      volume={29},
      xnumber={1},
       pages={146\ndash 168},
         url={http://dx.doi.org/10.1080/07362994.2011.532046},
}

\bib{benth1997}{article}{
      author={Benth, F.~E.},
      author={Deck, T.},
      author={Potthoff, J.},
       title={A white noise approach to a class of non-linear stochastic heat
  equations},
        date={1997},
        ISSN={0022-1236},
     journal={J. Funct. Anal.},
      volume={146},
      xnumber={2},
       pages={382\ndash 415},
         url={http://dx.doi.org/10.1006/jfan.1996.3048},
}

\bib{Brehier}{article}{
      author={Br\'ehier, C.-E.},
       title={Approximation of the invariant measure with an {Euler} scheme for
  stochastic {PDE}s driven by space-time white noise},
        date={2014},
        ISSN={0926-2601},
     journal={Potential Analysis},
      volume={40},
       pages={1\ndash 40},
         url={http://dx.doi.org/10.1007/s11118-013-9338-9},
}

\bib{Brehier2}{article}{
      author={Br\'ehier, C.-\'E.},
       title={Strong and weak order in averaging for {SPDE}s},
        date={2012},
     journal={Stoch. Proc. Appl.},
      volume={122},
      number={7},
       pages={2553\ndash 2593},
}

\bib{Brehier3}{article}{
      author={Br\'ehier, C.-\'E.},
      author={Kopec, M.},
       title={Approximation of the invariant law of {SPDE}s: error analysis
  using a {P}oisson equation for a full-discretization scheme},
        note={Preprint 2013,   arXiv:1311.7030},
}

\bib{buckwar2008}{article}{
      author={Buckwar, E.},
      author={Kuske, R.},
      author={Mohammed, S.-E.},
      author={Shardlow, T.},
       title={Weak convergence of the {E}uler scheme for stochastic
  differential delay equations},
        date={2008},
        ISSN={1461-1570},
     journal={LMS J. Comput. Math.},
      volume={11},
       pages={60\ndash 99},
  url={http://dx.doi.org.proxy.lib.chalmers.se/10.1112/S146115700000053X},
}

\bib{buckwar2005}{article}{
      author={Buckwar, E.},
      author={Shardlow, T.},
       title={Weak approximation of stochastic differential delay equations},
        date={2005},
        ISSN={0272-4979},
     journal={IMA J. Numer. Anal.},
      volume={25},
      xnumber={1},
       pages={57\ndash 86},
         url={http://dx.doi.org.proxy.lib.chalmers.se/10.1093/imanum/drh012},
}

\bib{KohatsuHiga2}{article}{
      author={Cl{\'e}ment, E.},
      author={Kohatsu-Higa, A.},
      author={Lamberton, D.},
       title={A duality approach for the weak approximation of stochastic
  differential equations},
        date={2006},
        ISSN={1050-5164},
     journal={Ann. Appl. Probab.},
      volume={16},
      xnumber={3},
       pages={1124\ndash 1154},
         url={http://dx.doi.org/10.1214/105051606000000060},
}

\bib{cohen2012}{article}{
      author={Cohen, D.},
      author={Sigg, M.},
       title={Convergence analysis of trigonometric methods for stiff
  second-order stochastic differential equations},
        date={2012},
        ISSN={0029-599X},
     journal={Numer. Math.},
      volume={121},
      xnumber={1},
       pages={1\ndash 29},
         url={http://dx.doi.org/10.1007/s00211-011-0426-8},
}

\bib{conus2014}{article}{
      author={Conus, D.},
      author={Jentzen, A.},
      author={Kurniawan, R.},
       title={Weak convergence rates of spectral {G}alerkin approximations for
  {SPDE}s with nonlinear diffusion coefficients},
        date={2014},
     journal={arXiv:1408.1108},
}

\bib{daprato1992}{book}{
      author={Da~Prato, G.},
      author={Zabczyk, J.},
       title={Stochastic {E}quations in {I}nfinite {D}imensions},
      series={Encyclopedia of Mathematics and its Applications},
   publisher={Cambridge University Press},
     address={Cambridge},
        date={1992},
      volume={44},
        ISBN={0-521-38529-6},
}

\bib{jentzen2010}{article}{
      author={Da Prato, G.},
      author={Jentzen, A.},
      author={R\"ockner, M.},
       title={A mild It\={o} formula for {SPDE}},
        note={Preprint 2012, arXiv:1009.3526},
}

\bib{Schrodinger}{article}{
      author={de~Bouard, A.},
      author={Debussche, A.},
       title={Weak and strong order of convergence of a semidiscrete scheme for
  the stochastic nonlinear {S}chr\"odinger equation},
        date={2006},
        ISSN={0095-4616},
     journal={Appl. Math. Optim.},
      volume={54},
       pages={369\ndash 399},
         url={http://dx.doi.org/10.1007/s00245-006-0875-0},
}

\bib{debussche2011}{article}{
      author={Debussche, A.},
       title={Weak approximation of stochastic partial differential equations:
  the nonlinear case},
        date={2011},
        ISSN={0025-5718},
     journal={Math. Comp.},
      volume={80},
      xnumber={273},
       pages={89\ndash 117},
}

\bib{debussche2009}{article}{
      author={Debussche, A.},
      author={Printems, J.},
       title={Weak order for the discretization of the stochastic heat
  equation},
        date={2009},
        ISSN={0025-5718},
     journal={Math. Comp.},
      volume={78},
      xnumber={266},
       pages={845\ndash 863},
}

\bib{elliott1992}{article}{
      author={Elliott, C.~M.},
      author={Larsson, S.},
       title={Error estimates with smooth and nonsmooth data for a finite
  element method for the {C}ahn-{H}illiard equation},
        date={1992},
        ISSN={0025-5718},
     journal={Math. Comp.},
      volume={58},
      xnumber={198},
       pages={603\ndash 630, S33\ndash S36},
}

\bib{evans}{book}{
      author={Evans, L.~C.},
       title={Partial Differential Equations},
     editions={Second},
      series={Graduate studies in mathematics},
     volume={19}
   publisher={AMS},
     address={Providence, Rhode Island},
        date={1998},
}

\bib{FuhrmanTessitore}{article}{
      author={Fuhrman, M.},
      author={Tessitore, G.},
       title={Nonlinear {K}olmogorov equations in infinite dimensional spaces:
  the backward stochastic differential equations approach and applications to
  optimal control},
        date={2002},
     journal={Ann. Probab.},
      volume={30},
       pages={1397\ndash 1465},
         url={http://dx.doi.org/10.1214/aop/1029867132},
}

\bib{Geissert}{article}{
      author={Geissert, M.},
      author={Kov{\'a}cs, M.},
      author={Larsson, S.},
       title={Rate of weak convergence of the finite element method for the
  stochastic heat equation with additive noise},
        date={2009},
        ISSN={0006-3835},
     journal={BIT},
      volume={49},
       pages={343\ndash 356},
         url={http://dx.doi.org/10.1007/s10543-009-0227-y},
}

\bib{grorud1992}{article}{
      author={Grorud, A.},
      author={Pardoux, {\'E}.},
       title={Int\'egrales {H}ilbertiennes anticipantes par rapport \`a un
  processus de {W}iener cylindrique et calcul stochastique associ\'e},
        date={1992},
        ISSN={0095-4616},
     journal={Appl. Math. Optim.},
      volume={25},
      xnumber={1},
       pages={31\ndash 49},
}

\bib{hausenblas2003Weak}{incollection}{
      author={Hausenblas, E.},
       title={Weak approximation for semilinear stochastic evolution
  equations},
        date={2003},
   booktitle={Stochastic analysis and related topics {VIII}},
      series={Progr. Probab.},
      volume={53},
   publisher={Birkh\"auser},
     address={Basel},
       pages={111\ndash 128},
      review={\MR{2189620 (2006k:60114)}},
}

\bib{hausenblas2010}{article}{
      author={Hausenblas, E.},
       title={Weak approximation of the stochastic wave equation},
        date={2010},
        ISSN={0377-0427},
     journal={J. Comput. Appl. Math.},
      volume={235},
      xnumber={1},
       pages={33\ndash 58},
}

\bib{oksendal2009}{book}{
      author={Holden, H.},
      author={{\O}ksendal, B.},
      author={Ub{\o}e, J.},
      author={Zhang, T.},
       title={Stochastic {P}artial {D}ifferential {E}quations},
     edition={Second},
      series={Universitext},
   publisher={Springer},
     address={New York},
        date={2010},
        ISBN={978-0-387-89487-4},
         url={http://dx.doi.org/10.1007/978-0-387-89488-1},
        note={A modeling, white noise functional approach},
}

\bib{Janson}{book}{
      author={Janson, S.},
       title={Gaussian {H}ilbert {S}paces},
      series={Cambridge Tracts in Mathematics},
   publisher={Cambridge University Press},
     address={Cambridge},
        date={1997},
      volume={129},
        ISBN={0-521-56128-0},
         url={http://dx.doi.org/10.1017/CBO9780511526169},
}

\bib{jentzen2010b}{article}{
      author={Jentzen, A.},
      author={R\"ockner, M.},
       title={Regularity analysis for stochastic partial differential equations
  with nonlinear multiplicative trace class noise},
        date={2012},
        ISSN={0022-0396},
     journal={J. Differential Equations},
      volume={252},
      xnumber={1},
       pages={114\ndash 136},
         url={http://dx.doi.org/10.1016/j.jde.2011.08.050},
}

\bib{KohatsuHiga1}{article}{
      author={Kohatsu-Higa, A.},
       title={Weak approximations. {A} {M}alliavin calculus approach},
        date={2001},
        ISSN={0025-5718},
     journal={Math. Comp.},
      volume={70},
      xnumber={233},
       pages={135\ndash 172},
         url={http://dx.doi.org/10.1090/S0025-5718-00-01201-1},
}

\bib{kopecthesis}{thesis}{
      author={Kopec, M.},
       title={Quelques contributions \`a l'analyse num\'erique d'\'equations
  stochastiques},
        type={Ph.D. Thesis},
        date={2014},
}

\bib{larsson2011}{article}{
      author={Kov{\'a}cs, M.},
      author={Larsson, S.},
      author={Lindgren, F.},
       title={Weak convergence of finite element approximations of linear
  stochastic evolution equations with additive noise},
        date={2012},
        ISSN={0006-3835},
     journal={BIT Numer. Math.},
      volume={52},
      xnumber={1},
       pages={85\ndash 108},
  url={http://dx.doi.org.proxy.lib.chalmers.se/10.1007/s10543-011-0344-2},
}

\bib{larsson2013}{article}{
      author={Kov\'acs, M.},
      author={Larsson, S.},
      author={Lindgren, F.},
       title={Weak convergence of finite element approximations of linear
  stochastic evolution equations with additive noise {II}. {F}ully discrete
  schemes},
        date={2013},
     journal={BIT Numer. Math.},
      volume={53},
      xnumber={2},
       pages={497\ndash 525},
}

\bib{kruse2012}{article}{
      author={Kruse, R.},
       title={Optimal error estimates of {G}alerkin finite element methods for
  stochastic partial differential equations with multiplicative noise},
        date={2014},
     journal={IMA J. Numer. Anal.},
      volume={34},
      xnumber={1},
       pages={217\ndash 251},
         url={http://dx.doi.org/10.1093/imanum/drs055},
}

\bib{kruse2013}{book}{
      author={Kruse, R.},
       title={Strong and {W}eak {A}pproximation of {S}tochastic {E}volution
  {E}quations},
      series={Lecture Notes in Mathematics},
   publisher={Springer},
        date={2014},
      volume={2093},
        ISBN={978-3-319-02230-7},
}

\bib{kl2010a}{article}{
      author={Kruse, R.},
      author={Larsson, S.},
       title={Optimal regularity for semilinear stochastic partial differential
  equations with multiplicative noise},
        date={2012},
        ISSN={1083-6489},
     journal={Electron. J. Probab.},
      volume={17},
      xnumber={65},
       pages={1\ndash 19},
         url={http://ejp.ejpecp.org/article/view/2240},
}

\bib{leon1998}{article}{
      author={Le{\'o}n, J.~A.},
      author={Nualart, D.},
       title={Stochastic evolution equations with random generators},
        date={1998},
        ISSN={0091-1798},
     journal={Ann. Probab.},
      volume={26},
      xnumber={1},
       pages={149\ndash 186},
}

\bib{LindnerSchilling}{article}{
      author={Lindner, F.},
      author={Schilling, R.~L.},
       title={Weak order for the discretization of the stochastic heat equation
  driven by impulsive noise},
        date={2012},
     journal={Potential Anal.},
      volume={38},
      xnumber={2},
       pages={345\ndash 179},
}

\bib{nualart2006}{book}{
      author={Nualart, D.},
       title={The {M}alliavin calculus and related topics},
     edition={Second},
      series={Probability and its Applications (New York)},
   publisher={Springer-Verlag},
     address={Berlin},
        date={2006},
}

\bib{oksendal2007}{book}{
      author={{\O}ksendal, B.},
       title={Stochastic {D}ifferential {E}quations},
     edition={Sixth},
      series={Universitext},
   publisher={Springer-Verlag},
     address={Berlin},
        date={2003},
        ISBN={3-540-04758-1},
        note={An introduction with applications},
}

\bib{pazy1983}{book}{
      author={Pazy, A.},
       title={Semigroups of {L}inear {O}perators and {A}pplications to
  {P}artial {D}ifferential {E}quations},
      series={Applied Mathematical Sciences},
   publisher={Springer},
     address={New York},
        date={1983},
      volume={44},
        ISBN={0-387-90845-5},
}

\bib{roeckner2007}{book}{
      author={Pr{\'e}v{\^o}t, C.},
      author={R{\"o}ckner, M.},
       title={A {Concise} {Course} on {Stochastic} {Partial} {Differential}
  {Equations}},
      series={Lecture Notes in Mathematics},
   publisher={Springer},
     address={Berlin},
        date={2007},
      volume={1905},
        ISBN={978-3-540-70780-6; 3-540-70780-8},
}

\bib{SanzSole}{book}{
      author={Sanz-Sol{\'e}, M.},
       title={Malliavin {Calculus}: with {Applications} to {Stochastic}
  {Partial} {Differential} {Equations}},
     edition={1. ed.},
      series={Fundamental sciences : Mathematics},
   publisher={EPFL Press},
     address={Lausanne, Switzerland},
        date={2005},
        ISBN={0-8493-4030-6, 2-940222-06-1},
}

\bib{thomee2006}{book}{
      author={Thom{\'e}e, V.},
       title={Galerkin {F}inite {E}lement {M}ethods for {P}arabolic
  {P}roblems},
     edition={Second},
      series={Springer Series in Computational Mathematics},
   publisher={Springer-Verlag},
     address={Berlin},
        date={2006},
      volume={25},
        ISBN={978-3-540-33121-6; 3-540-33121-2},
}

\bib{UMD}{unpublished}{
      author={van Neerven, J.~M.~A.~M.},
       title={Stochastic {E}volution {E}quations},
        date={2008},
        note={ISEM lecture notes},
}

\bib{walsh1986}{incollection}{
      author={Walsh, J.~B.},
       title={An introduction to stochastic partial differential equations},
        date={1986},
   booktitle={\'{E}cole d'\'et\'e de probabilit\'es de {S}aint-{F}lour,
  {XIV}--1984},
      series={Lecture Notes in Math.},
      volume={1180},
   publisher={Springer},
     address={Berlin},
       pages={265\ndash 439},
}

\bib{Wang}{article}{
      author={Wang, X.},
       title={An exponential integrator scheme for time discretization of
  nonlinear stochastic wave equation},
        note={Preprint 2013,  arXiv:1312.5185},
}

\bib{Wang2014}{article}{
      author={Wang, X.},
       title={Weak error estimates of the exponential {E}uler scheme for
  semi-linear {SPDE}s without {M}alliavin calculus},
       note={Preprint 2014, arXiv:1408.0713},
}

\bib{WangGan}{article}{
      author={Wang, X.},
      author={Gan, S.},
       title={Weak convergence analysis of the linear implicit {E}uler method
  for semilinear stochastic partial differential equations with additive
  noise},
        date={2013},
        ISSN={0022-247X},
     journal={J. Math. Anal. Appl.},
      volume={398},
      xnumber={1},
       pages={151\ndash 169},
         url={http://dx.doi.org/10.1016/j.jmaa.2012.08.038},
}

\bib{yan-thesis}{thesis}{
      author={Yan, Y.},
       title={Error {Analysis} and {Smoothing} {Properties} of {Discretized}
  {Deterministic} and {Stochastic} {Parabolic} {Problems}},
        type={Ph.D. Thesis},
        date={2003},
  note={\texttt{http://www.math.chalmers.se/Math/Research/Preprints/Doctoral/2%
003/3.pdf}},
}

\bib{yan2005}{article}{
      author={Yan, Y.},
       title={Galerkin finite element methods for stochastic parabolic partial
  differential equations},
        date={2005},
        ISSN={0036-1429},
     journal={SIAM J. Numer. Anal.},
      volume={43},
      xnumber={4},
       pages={1363\ndash 1384},
}

\end{biblist}
\end{bibdiv}


\end{document}